\newtheorem{thm}{Theorem}[section]
\newtheorem{cor}[thm]{Corollary}
\newtheorem{que}[thm]{Question}
\newtheorem{lem}[thm]{Lemma}
\newtheorem{prop}[thm]{Proposition}
\newtheorem{conj}[thm]{Conjecture}
\newtheorem*{conjecure}{Conjecture}
\theoremstyle{definition}
\newtheorem{defn}[thm]{Definition}
\newtheorem{exam}[thm]{Example}
\numberwithin{equation}{section}
\newcommand{\eps}{\varepsilon}
\newcommand{\N}{\mathbb{N}}
\newcommand{\Z}{\mathbb{Z}}
\newcommand{\Orb}{\mathrm{Orb}}
\DeclareMathOperator{\diam}{diam}
\DeclareMathOperator{\Int}{int}
\begin{document}
\title{On dynamics of graph maps with zero  topological entropy}
\author[J. Li]{Jian Li}
\address[J. Li]{Department of Mathematics,
Shantou University, Shantou, 515063, Guangdong, China}
\email{lijian09@mail.ustc.edu.cn}

\author[P. Oprocha]{Piotr Oprocha}
\address[P. Oprocha]{AGH University of Science and Technology, Faculty of Applied
    Mathematics, al.
    Mickiewicza 30, 30-059 Krak\'ow, Poland
    -- and --
    National Supercomputing Centre IT4Innovations, Division of the University of Ostrava,
    Institute for Research and Applications of Fuzzy Modeling,
    30. dubna 22, 70103 Ostrava,
    Czech Republic}
\email{oprocha@agh.edu.pl}

\author[Y. Yang]{Yini Yang}
\address[Y. Yang]{Department of Mathematics,
Shantou University, Shantou, 515063, Guangdong, China}
\email{ynyangchs@foxmail.com}

\author[T.~Zeng]{Tiaoying Zeng}
\address[T.~Zeng]{Department of Mathematics, Shantou University, Shantou, Guangdong 515063, P.R. China
-- \and -- School of Mathematics, Jiaying University, Meizhou, Guangdong  514015, P.R. China }
\email{zengtiaoying@126.com}

\date{\today}
\subjclass[2010]{37E25,37B40,37B05}
\keywords{Graph maps, mean equicontinuous, M\"obius disjointness conjecture, topological entropy,
topological sequence entropy, Li-Yorke chaos, }

\begin{abstract}
We explore the dynamics of graph maps with zero topological entropy.
It is shown that a continuous map $f$ on a topological graph $G$
has zero topological entropy if and only if it is locally mean equicontinuous, that is
the dynamics on each orbit closure is mean equicontinuous.
As an application, we show that Sarnak's M\"obius Disjointness Conjecture
is true for  graph maps with zero topological entropy.
We also extend several results known in interval dynamics to graph maps.
We show that a graph map has zero topological entropy if and only if there is no $3$-scrambled tuple
if and only if the proximal relation is an equivalence relation;
a graph map has no scrambled pairs if and only if it is null
if and only if it is tame.
\end{abstract}
\maketitle

\section{Introduction}
Let $(X,f)$ be a topological dynamical system and $(c_n)_{n=1}^\infty$ a sequence of complex numbers.
We say that the sequence $(c_n)_{n=1}^\infty$ is \emph{linearly disjoint} from $(X,f)$ if
for any $x\in X$ and any continuous function $\varphi\colon X\to\mathbb{C}$,
\[\lim_{N\to\infty}\frac{1}{N}\sum_{n=1}^N c_n\varphi(f^n(x))=0.\]

An interesting sequence is obtained by evaluation of the \textit{M\"obius function}
$\mu\colon \N\to \{-1,0,1\}$ which is defined as follows:
$\mu(1)=1$, $\mu(n) = (-1)^k$ when $n$ is a product of $k$ distinct primes and $\mu(n) = 0$
otherwise.
In \cite{S09}, Sarnak stated the following conjecture.

\begin{conjecure}[M\"obius Disjointness Conjecture]
	The M\"obius function $\mu(n)$ is linearly
	disjoint from all dynamical systems with zero topological entropy.
\end{conjecure}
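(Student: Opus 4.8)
The natural plan is to route the proof through the orthogonality criterion of Bourgain, Sarnak and Ziegler and to feed it with the structural rigidity forced by vanishing entropy. Fix a system $(X,f)$ with zero topological entropy, a point $x\in X$, and a continuous observable $\varphi\colon X\to\mathbb{C}$; writing $a_n=\varphi(f^n(x))$ we must show that $\frac1N\sum_{n=1}^N\mu(n)a_n\to0$. For the bounded multiplicative function $\mu$, that criterion reduces the vanishing of the M\"obius averages to a decorrelation of $(a_n)$ along distinct prime dilations: it suffices that for all pairs of distinct primes $p,q$,
\[
\lim_{N\to\infty}\frac1N\sum_{n=1}^N a_{pn}\overline{a_{qn}}
=\lim_{N\to\infty}\frac1N\sum_{n=1}^N\varphi(f^{pn}(x))\,\overline{\varphi(f^{qn}(x))}=0 .
\]
The whole problem is thereby transferred from number theory to a purely dynamical statement about the two commuting powers $f^p$ and $f^q$ evaluated along a common time index.

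The second step is to exploit zero entropy to produce this decorrelation. By the variational principle, $h_{\mathrm{top}}(f)=0$ forces every $f$-invariant Borel probability measure to have zero Kolmogorov--Sinai entropy, so the system carries no positive-entropy (``random'') component; through the ergodic decomposition one would then try to verify the orthogonality criterion measure by measure. On structured factors this is feasible: when the relevant factor is a pro-nilsystem the required cancellation is exactly the Green--Tao estimate for the M\"obius function against nilsequences, while equicontinuous or, more generally, mean equicontinuous behaviour renders the two-point correlations almost periodic and hence harmless. The hoped-for mechanism is that zero entropy confines the dynamics to such low-complexity building blocks, after which the prime-dilation correlations can be shown to vanish.

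The main obstacle, and the reason the statement is a conjecture rather than a theorem, is precisely that zero entropy is \emph{not} known to imply the decorrelation displayed above in general: there is no available structure theorem reducing an arbitrary zero-entropy system to pieces on which the Bourgain--Sarnak--Ziegler hypotheses can be checked, and the generic-point Ces\`aro averaging makes the measure-by-measure reduction delicate. The realistic partial routes are, first, to replace Ces\`aro by logarithmic averaging, where Tao's resolution of the two-point logarithmically averaged Chowla conjecture---built on the Matom\"aki--Radziwi\l\l\ theorem on multiplicative functions in short intervals---yields \emph{logarithmic} M\"obius disjointness for all zero-entropy systems; and second, to restrict to subclasses whose orbit complexity can be pinned down precisely, so that the decorrelation can be established by hand. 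The fully general Ces\`aro statement for every zero-entropy system, which is what is asserted here, remains beyond current methods.
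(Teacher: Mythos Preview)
You have correctly recognized that the statement in question is Sarnak's open conjecture, not a theorem, and your write-up is an accurate outline of the standard Bourgain--Sarnak--Ziegler strategy together with an honest account of why it falls short in general. The paper likewise does \emph{not} prove this statement: it is recorded there under a \texttt{conjecture} environment, with no accompanying proof, and serves only as motivation. What the paper actually establishes is the special case of graph maps with zero topological entropy (its Theorem~\ref{thm:main2}), obtained not via the BSZ criterion but by showing that such maps are locally mean equicontinuous and then invoking the Fan--Jiang machinery for oscillating sequences. So there is no discrepancy to flag: neither you nor the authors claim a proof of the full conjecture.
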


The case when $(X, f)$ is a finite periodic orbit is covered by the
Prime Number Theorem, which says that the number of
prime numbers less than or equal to $N$ is asymptotically $N/\ln N$ (e.g. see \cite{HW}).
The case when $f$ is a rotation on the circle is covered by Davenport's theorem which says that (see \cite{Dav}):
$$
\lim_{N\to\infty}\frac{1}{N}\sum_{n=1}^N \mu(n)e^{2\pi i \alpha n}=0
$$
for every $\alpha \in [0,1)$.
Recently, many other special cases of M\"obius Disjointness Conjecture have been considered.
In particular, Karagulyan proved in \cite{K15} that M\"obius Disjointness Conjecture
holds for continuous interval maps with zero topological entropy
and orientation-preserving circle homeomorphisms.

In \cite{FJ15}, Fan and Jiang introduced minimally mean attractable flows
and minimal mean Lyapunov stable flows
and proved that every oscillating sequence (which includes sequence defined by the M\"obius function)
is linearly disjoint from all dynamical systems which are minimally mean attractable and
minimal mean Lyapunov stable.
They also provided several examples of minimally mean attractable and minimally mean Lyapunov stable
flows, including all Feigenbaum zero topological
entropy flows, and all orientation-preserving circle homeomorphisms.
Motivated by the result of Karagulyan in \cite{K15},
Fan and Jiang conjectured in \cite[Remark 8]{FJ15} that
all continuous interval maps with
zero topological entropy are minimally mean attractable and minimally mean Lyapunov stable.

One of the motivations of this paper is to confirm this conjecture.
In fact, we show a little more, proving that every graph map with zero topological
entropy is locally mean equicontinuous, which in turn implies
 minimally mean attractability and minimally mean Lyapunov stability.
Strictly speaking, we have the following result.

\begin{thm}\label{thm:main1}
Let $f\colon G\to G$ be a graph map.
Then $f$ has zero topological entropy
if and only if it is locally  mean equicontinuous, that is
for every $x\in G$,
$(\overline{Orb(x,f)},f)$ is mean equicontinuous.
\end{thm}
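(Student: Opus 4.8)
The plan is to prove the two implications separately. Local mean equicontinuity $\Rightarrow$ zero topological entropy is the short one, so I dispose of it first. Suppose $f$ does \emph{not} have zero topological entropy. By the horseshoe theorem for graph maps (Llibre and Misiurewicz), positive entropy yields some $m\in\N$ and two arcs $J_0,J_1\subseteq G$ with disjoint interiors such that $f^m(J_0)\cap f^m(J_1)\supseteq J_0\cup J_1$. Coding the compact $f^m$-invariant set $\Lambda=\{z\in G: f^{mn}(z)\in J_0\cup J_1\text{ for all }n\ge 0\}$ gives a continuous surjection $\pi\colon(\Lambda,f^m)\to(\{0,1\}^{\N},\sigma)$ onto the full shift. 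Picking $z\in\Lambda$ with $\pi(z)$ a transitive point of $\{0,1\}^{\N}$ we get $\pi(\overline{\Orb(z,f^m)})=\{0,1\}^{\N}$, so $f^m$ has entropy at least $\log 2$ on $\overline{\Orb(z,f^m)}$ and hence $f$ has positive entropy on $\overline{\Orb(z,f)}$. Since mean equicontinuous systems have zero topological entropy (e.g.\ because every invariant measure of such a system has discrete spectrum, so zero measure-theoretic entropy, and one applies the variational principle), $(\overline{\Orb(z,f)},f)$ is not mean equicontinuous, so $f$ is not locally mean equicontinuous.

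For the converse, assume $h_{\mathrm{top}}(f)=0$, fix $x\in G$, and set $Y=\overline{\Orb(x,f)}$, $\omega=\omega(x,f)$. If $x$ is preperiodic then $Y$ is finite and there is nothing to prove, so assume it is not; then $Y=\Orb(x,f)\cup\omega$ and every point of $Y$ outside $\omega$ is isolated in $Y$ (a sequence in $Y\setminus\{f^jx\}$ converging to $f^jx$ would, after passing to a subsequence, lie either in $\omega$ or in the forward orbit with indices tending to $\infty$, forcing $f^jx\in\omega$, a contradiction). The substantive ingredient is the structure theory of zero-entropy graph maps: either $\omega$ is a periodic orbit, or there is a decreasing chain $G=D_0\supseteq D_1\supseteq D_2\supseteq\cdots$ of forward-invariant periodic subgraphs such that $D_k$ has $p_k$ connected components which $f$ permutes cyclically, $p_k\mid p_{k+1}$, $\omega\subseteq\bigcap_kD_k$, the orbit of $x$ eventually enters every $D_k$, and the ``pieces'' $\omega\cap C$, as $C$ ranges over the components of $D_k$, have diameters tending to $0$ uniformly as $k\to\infty$; in the remaining case of a circular minimal $\omega$ one uses instead that $f|_\omega$ is conjugate to an irrational rotation, hence equicontinuous, with small arcs in place of the $D_k$'s.

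Granting this, mean equicontinuity of $(Y,f)$ follows by a short argument. Given $\eps>0$, choose $k$ so that every piece $\omega\cap C$ ($C$ a component of $D_k$) has diameter $<\eps$; let $N_1$ be a time after which $f^nx\in D_k$ and $d(f^nx,\omega)$ is smaller than half the (positive) minimal distance between distinct components of $D_k$; and let $\delta>0$ be smaller than that minimal distance and smaller than the isolation radii of the finitely many points $f^jx$ with $j<N_1$ that lie outside $D_k$. Take $u,v\in Y$ with $d(u,v)<\delta$ and $u\neq v$. Then $u,v\in D_k$ (if, say, $u\notin D_k$, then $u=f^jx$ with $j<N_1$ outside $D_k$, and the isolation bound forces $v=u$), so $u,v$ lie in a common component of $D_k$, whence $f^\ell u$ and $f^\ell v$ lie in a common component $C_\ell$ of $D_k$ for every $\ell\ge 0$. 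For $\ell\ge N_1$ the nearest points of $\omega$ to $f^\ell u$ and to $f^\ell v$ lie in $\omega\cap C_\ell$, so
$d(f^\ell u,f^\ell v)\le d(f^\ell u,\omega)+\diam(\omega\cap C_\ell)+d(f^\ell v,\omega)$,
with $\diam(\omega\cap C_\ell)<\eps$ and with $d(f^\ell u,\omega),d(f^\ell v,\omega)\to 0$ as $\ell\to\infty$. Averaging and discarding the finitely many $\ell<N_1$ gives $\limsup_{N\to\infty}\frac1N\sum_{\ell=0}^{N-1}d(f^\ell u,f^\ell v)\le\eps$, which is exactly mean equicontinuity. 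In the periodic-orbit case the same scheme works with the $D_k$ replaced by small cyclically permuted neighbourhoods of the points of $\omega$ and with the fact (again from the structure theory) that a point $f^jx$ lying, for large $j$, near some $q\in\omega$ converges to $\omega$ in the phase of $q$, so $\delta$-close points $u,v\in Y$ have $d(f^\ell u,f^\ell v)\to 0$.

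The step I expect to be the main obstacle is establishing the structure theorem in precisely this form for \emph{graph} maps. Over an interval (the setting of Karagulyan's work and of the classical results of Sharkovskii, Misiurewicz and Sm\'\i tal) the nested periodic intervals are well understood, but the branch points of a graph make the combinatorics of how a subgraph can map across others considerably more intricate, and controlling this is exactly where the absence of horseshoes is used decisively. One must also accommodate circle subgraphs carrying irrational rotations, wandering subgraphs --- whose forward images shrink, which is what keeps the Ces\`aro averages small even when the individual pieces $\omega\cap C$ of $D_k$ fail to shrink --- and infinite $\omega$-limit sets that need not be minimal. Once the ``$\eps$-small, cyclically permuted, finite cover'' of $Y$ is in hand, the elementary observations that distinct components of a periodic subgraph are a positive distance apart and that lying in a common component is preserved by $f$ reduce everything to the bookkeeping above.
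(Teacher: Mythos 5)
Your necessity direction is fine (it differs from the paper, which instead takes an ergodic measure of positive entropy and invokes mean sensitivity of transitive systems carrying such a measure with full support, but the horseshoe route also works). The genuine gap is in the solenoid case of the converse, and it sits exactly where you flagged the ``main obstacle'': the structure theorem you need is false, already for interval maps. What is true (Blokh; \cite[Lemma~11]{RS14}) is that $\omega_f(x)$ lies in a nested sequence of cycles of graphs $X_n$ with periods $k_n\to\infty$; it is \emph{not} true that $\diam(\omega_f(x)\cap C)\to 0$ uniformly over components $C$ of $X_n$. Your estimate $d(f^\ell u,f^\ell v)\le d(f^\ell u,\omega)+\diam(\omega\cap C_\ell)+d(f^\ell v,\omega)$, with the middle term uniformly $<\eps$ and the outer terms tending to $0$, would give $\limsup_\ell d(f^\ell u,f^\ell v)\le\eps$ pointwise, i.e.\ Lyapunov stability of the orbit closure, not merely mean equicontinuity. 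That proves too much: zero-entropy interval maps can be Li--Yorke chaotic (Sm\'\i tal), so orbit closures containing solenoids can carry scrambled pairs $(u,v)$ with $\limsup_\ell d(f^\ell u,f^\ell v)>0$ even though $u,v$ eventually share a component of every $X_n$ and both converge to $\omega$; equivalently, the fibers of the odometer factor of a solenoidal $\omega$-limit set can be nondegenerate, and $\omega_f(x)$ need not be minimal. Your averaging only discards the initial segment $\ell<N_1$; it does nothing to repair a middle term that stays bounded away from $0$ along a subsequence of positive upper density.

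The correct replacement is a density argument on the components themselves, with $\omega$ playing no role: the $k_n$ sets $X_n,f(X_n),\dots,f^{k_n-1}(X_n)$ are pairwise disjoint subgraphs of $G$, so if $M$ is the total length of $G$ then at most $M/\eps$ of them have diameter $\ge\eps$; since $f^\ell(X_n)$ runs through these periodically, the time average of $\diam(f^\ell(X_n))$ is at most $\eps+\frac{M}{\eps}\cdot\frac{1}{k_n}\le 2\eps$ once $k_n\ge 2M/\eps^2$, and $d(f^\ell u,f^\ell v)\le\diam(f^\ell(X_n))$ for $u,v$ in the same component. This is why the conclusion is mean equicontinuity and not equicontinuity. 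A second, smaller inaccuracy: in the infinite non-solenoidal (circumferential) case, $f$ restricted to the relevant set is only \emph{almost} conjugate to an irrational rotation --- the fibres are connected subgraphs, possibly nondegenerate --- so ``conjugate, hence equicontinuous'' is not available; one must either use that the fibres along an orbit are pairwise disjoint (hence have diameters tending to $0$) together with mean equicontinuity of the boundary set $E$, which the paper obtains via the isomorphic-extension criterion of Downarowicz--Glasner, or some equivalent argument. Your reduction of the finite-$\omega$ case and the isolation argument for orbit points outside $D_k$ are fine.
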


Combining this result with \cite{FJ15} we obtain the following (see Section~\ref{sec:SMDC} for more details):

\begin{thm}\label{thm:main2}
Any oscillating sequence is linearly disjoint from all continuous graph maps with zero topological entropy.
In particular, Sarnak's M\"obius Disjointness Conjecture holds for graph maps with zero topological entropy.
\end{thm}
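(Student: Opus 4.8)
The plan is to deduce Theorem~\ref{thm:main2} from Theorem~\ref{thm:main1} together with the disjointness criterion of Fan and Jiang \cite{FJ15}. Recall from \cite{FJ15} that a system $(X,f)$ ($X$ compact metric, $f$ continuous) is \emph{minimally mean attractable} (MMA) if for every $x\in X$ there is a minimal set $M\subseteq X$ to which $x$ is \emph{mean attracted}, i.e.\ $\tfrac1n\sum_{j=0}^{n-1}d(f^jx,M)\to0$, and \emph{minimally mean Lyapunov stable} (MMLS) if for every minimal set $M\subseteq X$ and every $\eps>0$ there is $\delta>0$ such that $a,b\in M$ with $d(a,b)<\delta$ imply $\limsup_n\tfrac1n\sum_{j=0}^{n-1}d(f^ja,f^jb)\le\eps$. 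Fan and Jiang proved that any oscillating sequence is linearly disjoint from every system that is simultaneously MMA and MMLS, and that the M\"obius function is an oscillating sequence (Davenport's estimate gives the linear phases, and the required higher order is classical). Thus the whole task reduces to showing: a graph map $f\colon G\to G$ with zero topological entropy is both MMA and MMLS.

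By Theorem~\ref{thm:main1} such an $f$ is locally mean equicontinuous, and MMLS is immediate: any minimal set $M\subseteq G$ is the orbit closure of each of its points, so $(M,f)$ is mean equicontinuous, which is exactly the inequality defining mean Lyapunov stability on $M$. For MMA I would fix $x\in G$, put $Y=\overline{\Orb(x,f)}$ -- mean equicontinuous by Theorem~\ref{thm:main1} -- and introduce on $Y$ the quantity
\[
\rho(a,b)=\limsup_{n\to\infty}\tfrac1n\sum_{j=0}^{n-1}d(f^ja,f^jb).
\]
First one checks the elementary facts that $\rho$ is a pseudometric with $\rho(fa,fb)=\rho(a,b)$ for all $a,b\in Y$; then the key point is that $\rho$ is \emph{continuous} on $Y$, which is where mean equicontinuity of $(Y,f)$ is used, together with the triangle inequality for $\rho$. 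Granting this, the relation $\{(a,b):\rho(a,b)=0\}$ is closed, so the quotient $Z$ of $Y$ by it, metrized by $\rho$, is a compact metric space, $q\colon Y\to Z$ is a factor map, and the induced map $g\colon Z\to Z$ is an isometry; in particular $(Z,g)$ is an isometric, hence semisimple, system.

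It remains to finish MMA. Since $(Z,g)$ is semisimple, $N:=\overline{\Orb(q(x),g)}$ is a minimal subset of $Z$; I would then pick a minimal set $M\subseteq q^{-1}(N)$ (a nonempty closed $f$-invariant subset of $Y$), note that $q(M)$ is a nonempty closed $g$-invariant subset of the minimal set $N$, whence $q(M)=N\ni q(x)$, and choose $z\in M$ with $q(z)=q(x)$, i.e.\ $\rho(x,z)=0$. Since $f^jz\in M$ for every $j$,
\[
0\le\tfrac1n\sum_{j=0}^{n-1}d(f^jx,M)\le\tfrac1n\sum_{j=0}^{n-1}d(f^jx,f^jz),
\]
and the right-hand side has $\limsup$ equal to $\rho(x,z)=0$; hence $\tfrac1n\sum_{j=0}^{n-1}d(f^jx,M)\to0$, so $x$ is mean attracted to $M$. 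As $x$ was arbitrary, $(G,f)$ is MMA, and being also MMLS it is, by \cite{FJ15}, linearly disjoint from every oscillating sequence; Sarnak's conjecture for $(G,f)$ is the special case of the M\"obius function. I expect the main obstacle to be precisely the passage from mean equicontinuity to continuity of the pseudometric $\rho$ -- so that $Z$ is a genuine topological factor carrying an isometric action; the remaining ingredients (semisimplicity of isometric systems, lifting the minimal set $N$, and the cited facts about \cite{FJ15} and the M\"obius function) are routine.
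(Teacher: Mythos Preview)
Your overall architecture is identical to the paper's: use Theorem~\ref{thm:main1} to get local mean equicontinuity, deduce MMA and MMLS, and then invoke Fan--Jiang. Your proof that locally mean equicontinuous implies MMLS is also the same as the paper's.

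The one place you diverge is in the proof that locally mean equicontinuous implies MMA (the paper's Proposition~\ref{prop:LME-implies-MMA-MMLS}). You build the $f$-invariant pseudometric $\rho$, use mean equicontinuity to see that $\rho$ is continuous, pass to the quotient $(Z,g)$ which is an isometry, use that every orbit closure in an isometric system is minimal, and then lift. This is correct and conceptually pleasant---it essentially constructs the maximal mean-equicontinuous factor of the orbit closure---and it even yields the slightly stronger conclusion that there is a \emph{single} minimal point $z$ with $\rho(x,z)=0$. The paper's argument is shorter and more elementary: pick any minimal point $y_0\in\overline{\Orb_f(x)}$, use mean equicontinuity to find $\delta$, choose $m$ with $d(f^m(x),y_0)<\delta$, and pull $y_0$ back along $f^m$ inside the minimal set (where $f$ is surjective) to obtain the desired minimal $y$. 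So the paper avoids the quotient construction entirely, at the cost of getting a $y$ that depends on $\eps$; your route gives more structural information but requires checking continuity of $\rho$ and the semisimplicity of isometries.
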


It should be noticed that there is another approach to Theorem~\ref{thm:main2}.
Recently in \cite{HWY16} Huang, Wang and Zhang
provided a criterion for Sarnak's M\"obius Disjointness Conjecture,
which includes locally mean equicontinuous systems.
By Theorem~\ref{thm:main1}, every graph map with zero topological entropy
is locally mean equicontinuous and then satisfies the criterion.
But it is also worth mentioning at this point that results of \cite{HWY16}
heavily rely on combinatorial properties of M\"obius function and hence can be applied only to this function.

As a related topic, we characterize graph maps with zero topological entropy
in terms of scrambled pairs and tuples.
In \cite[Theorem 4.21]{L11} the author proved that if
an interval map  has zero topological entropy,
then it does not have any scrambled $3$-tuples.
A careful analysis of the structure of $\omega$-limit sets allows us to extend this
result onto the case of graph maps (the proof is presented in Section~\ref{sec:scrambled:entropy}):
\begin{thm}\label{thm:scrambled3tuple}
Let $f\colon G\to G$ be a graph map.
Then $f$ has zero topological entropy
if and only if
for any three pairwise distinct points $x_1,x_2,x_3\in G$,
the tuple $(x_1,x_2,x_3)$ is not scrambled.
\end{thm}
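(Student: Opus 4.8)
The plan is to treat the two implications separately; ``positive entropy $\Rightarrow$ a scrambled $3$-tuple exists'' is routine, and essentially all of the work lies in the converse. Recall that a triple $(x_1,x_2,x_3)$ of pairwise distinct points is \emph{scrambled} when
$$\liminf_{n\to\infty}\ \max_{1\le i<j\le 3}d\bigl(f^{n}(x_i),f^{n}(x_j)\bigr)=0
\qquad\text{and}\qquad
\limsup_{n\to\infty}\ \min_{1\le i<j\le 3}d\bigl(f^{n}(x_i),f^{n}(x_j)\bigr)>0,$$
i.e.\ the triple is proximal but not asymptotic. If some pair among $x_1,x_2,x_3$ is asymptotic then $\min_{i<j}d(f^{n}(x_i),f^{n}(x_j))\to 0$ and the second requirement fails, so it suffices to prove: \emph{if $f$ has zero topological entropy and $x_1,x_2,x_3$ are pairwise distinct with the triple proximal, then two of them are asymptotic.}

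For the converse, suppose $f$ has positive topological entropy. For graph maps this forces some iterate $f^{k}$ to admit a horseshoe, and, after passing to a further power, a strong $3$-horseshoe: three arcs $A_0,A_1,A_2$ with pairwise disjoint closures such that $f^{k}(A_i)\supseteq A_0\cup A_1\cup A_2$ for $i=0,1,2$. Using itineraries, choose three points whose $f^{k}$-orbits realize three sequences over $\{0,1,2\}$ that agree on common blocks of lengths tending to infinity — which makes the triple proximal — and that take all three values $0,1,2$ at infinitely many common coordinates — which lands the three orbit points in the mutually disjoint arcs $A_0,A_1,A_2$ at a common time, so no pair is asymptotic. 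This is a scrambled $3$-tuple for $f^{k}$, hence for $f$. (This direction is in any case subsumed by the general link between positive entropy and Li--Yorke chaos of all orders.) So zero entropy is necessary.

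Now assume $f$ has zero topological entropy and, toward a contradiction, that $x_1,x_2,x_3$ are pairwise distinct, the triple is proximal, and no two of them are asymptotic. Passing to $n_\ell\to\infty$ along which $f^{n_\ell}(x_i)\to z$ simultaneously for $i=1,2,3$ gives $z\in\bigcap_i\omega(x_i,f)$. The crux is to place the three orbits in a tractable part of $G$ via the structure of $\omega$-limit sets of zero-entropy graph maps obtained above: apart from periodic orbits and the finitely many periodic circles of $G$ (on each of which the return map is either (semi)conjugate to an irrational rotation or has a periodic point, hence is interval-like), every infinite $\omega$-limit set is solenoidal and is trapped in a nested tower of cyclically permuted subgraphs whose periods tend to infinity, the levels refining one another and the pieces at each level having pairwise disjoint interiors. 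Proximality then synchronizes the three orbits: from some time on they lie, at common times, in one common piece of every level of the relevant tower and remain in its cycle (or they are confined near a common periodic orbit, or in a common periodic circle). The rotation-circle case is dispatched immediately, since a proximal triple there must sit in a single fiber of the semiconjugacy to the rotation, whose iterates are pairwise disjoint and therefore shrink to points, making all three points mutually asymptotic — contradiction. In every remaining case the structure theory reduces us, after replacing $f$ by a suitable power and restricting to a small invariant arc, to the situation where $f^{N}(x_1),f^{N}(x_2),f^{N}(x_3)$ lie in a common arc $J$ — say $f^{N}(x_1)\prec f^{N}(x_2)\prec f^{N}(x_3)$ for some orientation of $J$ — with their forward orbits staying in the cycle of $J$. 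Here one carries out the argument of \cite[Theorem 4.21]{L11}: because $f$ has no horseshoe on $J$, the coordinate lying strictly between the other two at infinitely many synchronization times gets pinned against one side of the ever finer periodic subarcs trapping the orbits, which forces the distance between some fixed pair among $x_1,x_2,x_3$ to tend to $0$; that pair is asymptotic, the contradiction sought. Hence zero entropy suffices, completing the proof.

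The steps I expect to be the genuine obstacles are, first, deducing from proximality alone the confinement of all three orbits to one common arc at common times — where the branch points and embedded circles of $G$ must be shown not to let the three orbits stay mutually separated while repeatedly coming close, which is precisely the purpose of the careful analysis of $\omega$-limit sets — and, second, transporting the interval-case dichotomy of \cite[Theorem 4.21]{L11} (a persistently ``middle'' coordinate cannot coexist with proximality when there is no horseshoe) to an arc of $G$ embedded in such a tower.
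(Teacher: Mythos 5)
Your proposal follows essentially the same route as the paper: the positive-entropy direction rests on a horseshoe for some iterate (the paper packages this as a Devaney chaotic subsystem via Proposition~\ref{prop:Devaney-chaos} plus Xiong's theorem that Devaney chaos yields $n$-scrambled tuples, which is exactly the ``subsumed by known results'' remark you make), and the zero-entropy direction uses Blokh's classification of infinite $\omega$-limit sets, kills the circumferential case by the shrinking fibres of the almost conjugacy to an irrational rotation, and reduces the solenoidal case, after synchronizing the three orbits into a free invariant arc of a deep cycle of graphs, to the interval result of \cite[Theorem 4.21]{L11} --- precisely the content of the paper's Lemma~\ref{lem:scrambled-soleniod} and the final reduction in its proof. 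The only cosmetic difference is that the paper simply cites the interval theorem on the invariant arc (which is literally a zero-entropy interval map) instead of re-running its ``middle point'' argument there.
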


In Proposition 4.3 and Theorem 5.5 of \cite{L11} , the author proved that
an interval map has zero topological entropy if and only if
every proximal pair is Banach proximal if and only if the proximal relation is an equivalence relation.
We also extend this result to graph maps
(the proof is also presented in Section~\ref{sec:scrambled:entropy}):

\begin{thm}\label{thm:proximal}
Let $f\colon G\to G$ be a graph map. The following conditions are equivalent:
	\begin{enumerate}
		\item\label{thm:proximal:1} $f$ has zero topological entropy,
		\item\label{thm:proximal:2} every proximal pair of $f$ is Banach proximal,
		\item\label{thm:proximal:3} the proximal relation $\mathrm{Prox}(f)$ is an equivalence relation.
	\end{enumerate}
\end{thm}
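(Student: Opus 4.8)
The plan is to prove the cycle of implications (1) $\Rightarrow$ (2) $\Rightarrow$ (3) $\Rightarrow$ (1).

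The implication (2) $\Rightarrow$ (3) uses nothing about graphs. Under (2) the relation $\mathrm{Prox}(f)$ coincides with the relation of Banach proximality (every Banach proximal pair is obviously proximal), so it is enough to observe that Banach proximality is an equivalence relation. Reflexivity and symmetry are immediate; for transitivity, if $(x,y)$ and $(y,z)$ are Banach proximal then for each $\eps>0$ the sets $\{n\in\N:d(f^{n}x,f^{n}y)<\eps/2\}$ and $\{n\in\N:d(f^{n}y,f^{n}z)<\eps/2\}$ have Banach density one, hence so does their intersection (the upper Banach density is subadditive), and on that intersection $d(f^{n}x,f^{n}z)<\eps$.

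For (3) $\Rightarrow$ (1) I would argue by contraposition. Suppose $f$ has positive topological entropy. Then some power $g=f^{m}$ has a horseshoe: there is a closed $g$-invariant set $\Lambda$ on which $g$ is topologically transitive and has two distinct fixed points $p,q$, together with a point $w\in\Lambda$ whose $g$-orbit is dense in $\Lambda$, so that $p,q\in\omega(w,g)$. Since $p$ and $q$ are fixed by $g=f^{m}$, approaching them along the times in $m\N$ shows that both $(w,p)$ and $(w,q)$ lie in $\mathrm{Prox}(f)$. On the other hand $p$ and $q$ are two distinct points with finite $f$-orbits, and $f$ permutes each such orbit cyclically, so $f^{n}p\neq f^{n}q$ for every $n$ and $d(f^{n}p,f^{n}q)$ is bounded away from $0$; hence $(p,q)\notin\mathrm{Prox}(f)$. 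Thus $\mathrm{Prox}(f)$ fails to be transitive, so (3) fails. (Here I only use the standard fact that a graph map with positive topological entropy has a horseshoe for some iterate; no further care with periods or orbits is needed, because two distinct points of a common finite orbit are automatically non-proximal.)

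The heart of the matter is (1) $\Rightarrow$ (2). Assume $f$ has zero topological entropy and let $(x,y)$ be a proximal pair; fix $\eps>0$. One ingredient is Theorem~\ref{thm:main1}, by which $(\overline{\Orb(x,f)},f)$ and $(\overline{\Orb(y,f)},f)$ are mean equicontinuous; this already confines $\{n:d(f^{n}x,f^{n}y)\ge\eps\}$ to a set of \emph{ordinary} density zero. The decisive input, however, is the description of $\omega$-limit sets of zero-entropy graph maps obtained in the preceding sections: both $\omega(x,f)$ and $\omega(y,f)$ sit inside a common tower of periodic decompositions --- cyclic families of closed sets permuted by $f$ into which $\Orb(x,f)$ and $\Orb(y,f)$ are eventually absorbed --- and proximality of $(x,y)$ forces $f^{n}x$ and $f^{n}y$ to enter the decomposition in the same residue class, hence to share a common piece at each level for all large $n$ outside a set of times confined to the boundedly many ``transitions per period'', which therefore has Banach density zero on every long window. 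Descending far enough in the tower that the relevant pieces have diameter $<\eps$, and treating separately the finitely many pieces that do not shrink (this is where scrambled pairs live), yields that $\{n:d(f^{n}x,f^{n}y)\ge\eps\}$ has Banach density zero, i.e.\ $(x,y)$ is Banach proximal. The main obstacle is exactly this step: upgrading ``density zero'' to ``Banach density zero''. Mean equicontinuity alone controls Ces\`aro averages only from time $0$, so the argument must lean on the combinatorial rigidity of the periodic decompositions available under zero entropy and on careful bookkeeping at the scales where those decompositions fail to refine to points; once this is done, the remaining implications are comparatively soft.
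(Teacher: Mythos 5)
Your implication (2)~$\Rightarrow$~(3) is fine and is exactly the paper's argument, and your observation in (3)~$\Rightarrow$~(1) that two distinct periodic points are never proximal is correct. The problems are in the other two steps. For (1)~$\Rightarrow$~(2), which you rightly identify as the heart of the matter, you have described a strategy rather than given a proof: the ``decisive input'' (both orbits eventually absorbed into a common solenoid tower \emph{in the same residue class}, plus the bookkeeping that at each level only boundedly many pieces have diameter $\ge\eps$ while the period $k_n\to\infty$) is precisely the step you label ``the main obstacle'' and leave undone. Moreover, the claim that $\omega_f(x)$ and $\omega_f(y)$ share such a tower is not free: one must first rule out the finite and circumferential cases, which is the content of Lemma~\ref{lem:scrambled-soleniod} (non-asymptotic proximal pairs force both $\omega$-limit sets to be solenoids, via the almost conjugacy to an irrational rotation), and you never invoke it. Your side remark that Theorem~\ref{thm:main1} already gives ordinary density zero is also not directly applicable, since $y$ need not lie in $\overline{\Orb_f(x)}$, so mean equicontinuity of that orbit closure says nothing about the pair $(x,y)$. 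For the record, the paper takes a different and shorter route here: after Lemma~\ref{lem:scrambled-soleniod} it uses the solenoid structure only to locate an $f^n$-invariant interval $I\subset G$ with $f^j(x),f^j(y)\in I$, and then quotes the interval-map result \cite[Proposition~4.3]{L11} (proximal implies Banach proximal for zero-entropy interval maps); your direct tower-counting argument, if written out along the lines of the computation in Lemma~\ref{lem:solenoid2}, would be a legitimate alternative, but as it stands it is a plan, not a proof.

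In (3)~$\Rightarrow$~(1) the asserted ``standard fact'' is stronger than what the horseshoe theorem gives you. What you need is a point $w$ and two \emph{distinct fixed points} $p,q$ of some iterate $g=f^m$ with $p,q\in\omega_g(w)$. The standard conclusion from a horseshoe is an invariant set on which $g$ is only \emph{almost} conjugate (semi-conjugate with possibly nondegenerate connected fibres) to the full $2$-shift; the fixed points $0^\infty,1^\infty$ of the shift pull back to invariant arcs, which do contain fixed points of $g$, but there is no reason these particular fixed points lie in $\omega_g(w)$ for your chosen $w$ (the $\omega$-limit set only meets each fibre), and proximality does not lift along a semi-conjugacy. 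Replacing fixed points by arbitrary periodic points does not help either, since a transitive point need not be proximal to a \emph{given} periodic point (the approach times may be in the wrong residue class modulo the period). So this step needs an actual argument; the paper sidesteps it by extracting a Devaney chaotic subsystem (Proposition~\ref{prop:Devaney-chaos}) and citing \cite[Lemma~5.4]{L11}, which asserts that the proximal relation of a Devaney chaotic system is not an equivalence relation.
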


The last main result of this paper is the following theorem.
\begin{thm}\label{thm:LY=nIT}
	Let $f\colon G\to G$ be a graph map. The following conditions are equivalent:
	\begin{enumerate}
		\item\label{thm:LY=nIT:1} $f$ does not have scrambled pairs,
		\item\label{thm:LY=nIT:2} $f$ is tame,
		\item\label{thm:LY=nIT:3} $f$ is null.
	\end{enumerate}
\end{thm}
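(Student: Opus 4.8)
The plan is to prove the cyclic chain of implications $(3)\Rightarrow(2)\Rightarrow(1)\Rightarrow(3)$, using the characterization of zero topological entropy graph maps from Theorems~\ref{thm:main1} and \ref{thm:scrambled3tuple} as the backbone. First I would observe that $(3)\Rightarrow(2)$ is essentially formal and known in general topological dynamics: a null system (zero topological sequence entropy for every sequence) is always tame, since independence sets for tame systems are exactly the infinite independence sets that sequence entropy detects — more precisely, if a system is not tame it has an infinite IT-tuple, which forces positive sequence entropy along a suitable subsequence. So this implication does not use the graph structure at all and can be quoted from the literature (Kerr--Li theory of independence).

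The implication $(2)\Rightarrow(1)$ is also general: if $f$ is tame then it has no infinite IT-pair, and in particular no IT-pair at all beyond the trivial ones; but a scrambled pair $(x,y)$ — being both proximal and having $\limsup d(f^n x, f^n y)>0$ — generates an infinite IN-pair, hence an IT-pair, by a standard argument (proximality gives arbitrarily long blocks where the orbits are close, while the $\limsup$ condition gives arbitrarily long blocks where they are far apart, and one assembles these into an independence set). Alternatively, and perhaps more cleanly for a self-contained graph-map paper, one can route $(2)\Rightarrow(1)$ through entropy: a tame graph map has zero topological entropy (tame implies null implies zero entropy, or cite that tame systems have zero entropy), and then Theorem~\ref{thm:scrambled3tuple} — or rather the pair version, which for graph maps must also hold once we know the $\omega$-limit structure — rules out scrambled pairs. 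I expect this is the route the authors take, since it reuses machinery already developed.

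The heart of the matter, and the step I expect to be the main obstacle, is $(1)\Rightarrow(3)$: if $f$ has no scrambled pairs then $f$ is null. The natural strategy is first to show that absence of scrambled pairs forces zero topological entropy — this should follow because positive entropy of a graph map produces a horseshoe-like structure (via some power of $f$ acting on a subinterval), which in turn produces a Cantor set of scrambled pairs, contradicting $(1)$; this is the graph-map analogue of the classical interval result (Misiurewicz, Smítal) and should be assembled from the structural results on $\omega$-limit sets already invoked in Section~\ref{sec:scrambled:entropy}. Once zero entropy is in hand, Theorem~\ref{thm:main1} tells us $f$ is locally mean equicontinuous, so every orbit closure supports a mean equicontinuous — hence, after factoring out the $\mu$-negligible part, almost equicontinuous — subsystem. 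The remaining task is to upgrade "no scrambled pair on each orbit closure plus zero entropy" to genuine nullness: here I would argue that a zero-entropy graph map without Li--Yorke pairs must have all its $\omega$-limit sets being either periodic orbits or solenoidal (odometer-like) sets, and on such sets the map is an isometric extension structure with no sequence entropy; the key lemma is that a graph map all of whose $\omega$-limit sets are "simple" in this sense, and which is Li--Yorke free, cannot carry a nontrivial IN-tuple, so its sequence entropy vanishes for every sequence. The delicate point is controlling what happens at the branch points and on the (finitely many) basic sets of the decomposition of $G$ under $f$ — ensuring that the local mean equicontinuity genuinely rules out the weak mixing behavior on subsystems that nullness forbids. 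I would handle this by a careful case analysis on the cycle-of-intervals decomposition, reducing to the already-known interval case on each piece.
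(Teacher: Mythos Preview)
Your cycle direction $(3)\Rightarrow(2)\Rightarrow(1)\Rightarrow(3)$ matches the paper's, and $(3)\Rightarrow(2)$ is indeed the general Kerr--Li fact. But both of your routes for $(2)\Rightarrow(1)$ fail. The ``standard argument'' that a scrambled pair $(x,y)$ produces an IT-pair is not valid: independence of $(U_1,U_2)$ along an infinite set $J$ requires, for \emph{every} pattern $s\in\{1,2\}^J$, a single point $z$ with $f^j(z)\in U_{s(j)}$ for all $j\in J$; the mere proximality/non-asymptoticity of the particular pair $(x,y)$ does not manufacture such points. Your alternative route is worse: zero topological entropy does \emph{not} rule out scrambled pairs for graph maps --- Theorem~\ref{thm:scrambled3tuple} only excludes $3$-scrambled tuples, and there are zero-entropy interval maps with scrambled pairs (Sm\'\i tal's examples). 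The paper's actual argument (Lemma~\ref{lem:tameLY}) is graph-specific: a scrambled pair forces $\omega_f(x)$ to be a solenoid (Lemma~\ref{lem:scrambled-soleniod}), one then passes to an invariant arc $I\subset G$ under some iterate $f^n$ containing the pair, and invokes the known interval result \cite{L11} that a Li--Yorke pair for an interval map yields a non-diagonal IT-pair.

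Your plan for $(1)\Rightarrow(3)$ is also off-target. Local mean equicontinuity does not imply null in general, so routing through Theorem~\ref{thm:main1} gives no obvious handle on sequence entropy; and your description of the possible $\omega$-limit sets omits the circumferential case (almost-conjugate to an irrational rotation), which does occur and must be handled. The paper does not use mean equicontinuity here at all. Instead it proves directly that under hypothesis (1) there is no non-diagonal IN-pair: first, both coordinates of any IN-pair lie in $\omega(f)=\bigcap_n f^n(\Omega(f))$ (Lemma~\ref{lem:INinOmega}, using \cite{MS07v2}); then a case split on $\omega_f(x)$. If $\omega_f(x)$ is infinite, one shows $\omega_f(x)=\omega_f(y)$ (Lemma~\ref{lem:omegas}) and then, in the solenoid case, reduces to an interval and uses that connected components of the nested cycles have diameters $\to 0$, while in the circumferential case one uses that an irrational rotation has no nontrivial independence, forcing $x=y$ (Lemma~\ref{lem:no-LY-pairs1}). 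If $\omega_f(x)$ is finite, both $x$ and $y$ are periodic, and an IN-pair of fixed points for a graph map forces positive entropy by \cite{T11}, a contradiction (Lemma~\ref{lem:no-LY-pairs2}). These IN-pair lemmas are the missing engine in your sketch.
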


In the case of interval maps, this result is a consequence of \cite{FS91}
by Franzova and Sm\'\i tal (equivalence of (1) and (3))
and \cite{GY09} by Glasner and Ye (equivalence of (2) and (3)).
This was later generalized even further in \cite{L11} where it is proved that
in interval maps IN-pairs and IT-pairs coincide
(see subsection \ref{subsec:IN-IT-pairs} for the definitions).
Even though the results look similar, we adopt here a different approach compared to \cite{FS91}. It is also worth emphasizing that some tools used for interval maps cannot
be applied in the case of graph maps. Because of these difficulties, we are not able to mimic results of \cite{L11} in Theorem~\ref{thm:LY=nIT}, proving only that IN-pairs and IT-pairs coexist.
We leave the later statement as an open question.

\begin{que}
Let $f\colon G\to G$ be a graph map with zero topological entropy.
Is it true that: $(x,y)$ is an IN-pair if and only if $(x,y)$ is an IT-pair?
\end{que}

\section{Preliminaries}
Throughout this paper, let $\N$, $\N_0$, $\Z$ and $\mathbb{R}$
denote the set of all positive integers, non-negative integers, integers and real numbers, respectively.
The cardinality of a set $A$ is denoted by $|A|$.

An \emph{arc} is any topological space homeomorphic to the closed unit interval $[0,1]$.
A \emph{(topological) graph} is a non-degenerated compact
connected metric space $G$ which can be written as a union of finitely many
arcs, any two of which are disjoint or intersect only in their endpoints
(i.e., it is a non-degenerated connected compact one-dimensional polyhedron in $\mathbb{R}^3$).
By a \textit{graph map} we mean any continuous map $f\colon G\to G$ acting on a topological graph $G$.

By a \textit{topological dynamical system}, we mean a pair $(X, f)$, where $X$
is a compact metric space with a  metric $d$ and $f\colon X\to X$ is a continuous map.

A subset $K$ of $X$ is \emph{$f$-invariant} (or simply \emph{invariant}) if $f(K)\subset K$.
If $K$ is a closed $f$-invariant subset of $X$, then $(K,f|_K)$  is also a dynamical system.
We will call it a \emph{subsystem} of $(X,f)$.
If there is no ambiguity, for simplicity we will write $f$ instead of $f|_K$.
For a point $x\in X$, the \emph{orbit} of $x$, denoted by  $\Orb_f(x)$,
is the set $\{f^n(x): n\in\N_0\}$,
and the \emph{$\omega$-limit set} of $x$, denoted by $\omega_f(x)$,
is the set of limit points of the sequence $(f^n(x))_{n\in\N_0}$.

A point $x\in X$ is \emph{periodic} with the least period $n$ if
$n$ is the smallest positive integer satisfying $f^n(x) = x$,
and \emph{non-wandering} if for every open neighborhood $U$ of $x$
there exists $n\in\mathbb{N}$ such that $U\cap f^{-n}(U)\neq\emptyset$.
The set of non-wandering points is denoted by $\Omega(f)$.
The set $\omega(f)=\bigcup_{x\in X}\omega_f(x)$ is called the \emph{$\omega$-limit set} of $f$.
It is clear that $\omega(f)\subset \Omega(f)$.

A dynamical system $(X,f)$ is  \emph{minimal}
if it does not contain any non-empty proper subsystem.
A point $x\in X$ is \emph{minimal} if it belongs to some minimal subsystem of $(X,f)$.
By the Zorn's Lemma, it is not hard to see that every dynamical system has a minimal subsystem
and then there exists some minimal point.
A dynamical system $(X,f)$ is \emph{transitive} if $X=\omega_f(x)$ for some $x\in X$
and such a point $x$ is called a \textit{transitive point}.

We refer the reader to the textbook \cite{W82} for information on topological entropy and
topological sequence entropy.

\subsection{Mean equicontinuity and mean sensitivity}
Recall that a dynamical system $(X, f)$ is \emph{equicontinuous}
if for every $\eps>0$ there is $\delta>0$ with
the property that for every two points $x,y\in X$,
$d(x,y)<\delta$ implies $d(f^n(x),f^n(y))<\eps$ for all $n\in\N_0$.
Equicontinuous systems have simple dynamical behaviors.
It is well known that a dynamical system  $(X,f)$ with
$f$ being surjective is equicontinuous if and only if there exists a compatible metric $\rho$
on $X$ such that $f$ acts on $X$ as an isometry,
i.e., $\rho(f(x),f(y))=\rho(x,y)$ for all $x,y\in X$.

In \cite{F51} Fomin introduced the following weaker version of the equicontinuity condition.
A dynamical system $(X, f)$ is \emph{mean equicontinuous} or
\emph{mean Lyapunov stable}
 if for every $\eps>0$ there is $\delta>0$ with
the property that for every two points $x,y\in X$ condition
$d(x,y)<\delta$ implies
\[\limsup_{n\to\infty}\frac{1}{n}\sum_{k=0}^{n-1}d(f^k(x),f^k(y))<\eps.\]
It is shown in~\cite{LTY15} that every ergodic invariant measure of a mean equicontinuous system
has discrete spectrum and then the topological entropy of a mean equicontinuous system is zero.

A property opposite to mean equicontinuity is called mean sensitivity. Strictly speaking,
a dynamical system $(X, f)$ is \emph{mean sensitive} if there exists $\delta > 0$
such that for every $x \in X$ and every neighborhood $U$ of $x$, there exists $y \in U$ such that
\[\limsup_{n\to\infty}\frac{1}{n}\sum_{k=0}^{n-1}d(f^k(x),f^k(y))>\delta.\]
We have the following dichotomy result for minimal dynamical systems:
every minimal system is either mean equicontinuous or mean sensitive (see \cite{LTY15} or \cite{G15}).
While there exist transitive dynamical systems with positive entropy which are not mean sensitive (e.g. see \cite[Corollary~4.8]{LTY15}), the following result holds (see \cite{G15} or \cite{L16}).

\begin{thm}\label{thm:mean-sensitivity}
Let $(X,f)$ be a dynamical system.
If there exists an ergodic invariant measure with full support and positive entropy,
then $(X,f)$ is mean sensitive.
\end{thm}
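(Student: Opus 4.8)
The plan is to obtain mean sensitivity from its measure‑theoretic analogue: positive metric entropy furnishes the latter via the spectral dichotomy for ergodic systems, and full support of $\mu$ transfers it back to the topological level. Following \cite{G15,L16}, one uses that an ergodic measure‑preserving system $(X,\mu,f)$ is either \emph{$\mu$-mean equicontinuous} or \emph{$\mu$-mean sensitive}, the former occurring exactly when $\mu$ has discrete spectrum. A system with discrete spectrum is measure‑theoretically isomorphic to a rotation on a compact abelian group, hence has zero entropy; so $h_\mu(f)>0$ rules out discrete spectrum, and therefore $(X,\mu,f)$ is $\mu$-mean sensitive. Unwinding the definition, there is $\delta_0>0$ such that for $\mu$-a.e.\ $x\in X$ and every $\eps>0$,
\[
\mu\Bigl(\bigl\{\, y\in X : d(x,y)<\eps \ \text{ and }\ \limsup_{n\to\infty}\frac1n\sum_{k=0}^{n-1} d(f^k(x),f^k(y))>\delta_0 \,\bigr\}\Bigr)>0 .
\]

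Next I would upgrade this ``$\mu$-a.e.\ $x$'' statement to the ``every $x$'' statement required by mean sensitivity, and here full support of $\mu$ enters. Put $\delta:=\delta_0/2$ and let $G$ be the $\mu$-conull set of points $x$ for which the displayed inequality holds for all $\eps$; since $\supp\mu=X$, the set $G$ is dense in $X$. Fix any $x\in X$ and any neighbourhood $U$ of $x$, and choose $\eps>0$ with $B(x,2\eps)\subseteq U$. Pick $x'\in G\cap B(x,\eps)$; by the choice of $x'$ there is $y\in B(x',\eps)\subseteq U$ with $\limsup_{n\to\infty}\frac1n\sum_{k=0}^{n-1}d(f^k(x'),f^k(y))>\delta_0$. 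From the pointwise triangle inequality $d(f^k(x'),f^k(y))\le d(f^k(x),f^k(x'))+d(f^k(x),f^k(y))$, averaging over $0\le k<n$ and passing to the $\limsup$ gives
\[
\limsup_{n\to\infty}\frac1n\sum_{k=0}^{n-1} d(f^k(x),f^k(x')) \;+\; \limsup_{n\to\infty}\frac1n\sum_{k=0}^{n-1} d(f^k(x),f^k(y)) \;\ge\; \delta_0 = 2\delta ,
\]
so at least one of the two $\limsup$'s exceeds $\delta$. Since both $x'$ and $y$ lie in $U$, in either case $U$ contains a point whose orbit stays, in time average, at $\limsup$-distance more than $\delta$ from the orbit of $x$. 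As $x$ and $U$ were arbitrary, $(X,f)$ is mean sensitive with constant $\delta$.

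The second paragraph is routine; the real content is the first step, namely that positive metric entropy --- equivalently, failure of discrete spectrum --- already produces, inside every ball, a positive‑measure set of points drifting away from the centre in time average. (If the version of $\mu$-mean sensitivity one quotes is phrased for \emph{every} $x$ rather than $\mu$-a.e.\ $x$, the triangle‑inequality step becomes unnecessary and topological mean sensitivity is immediate from full support; the argument above is kept because it does not depend on this distinction.) An alternative would be to extract from $h_\mu(f)>0$, via the Kerr--Li theory of independence, a non‑diagonal IE‑pair with positive‑density independence sets and then localize it using full support of $\mu$; but passing through the spectral dichotomy is the most economical route.
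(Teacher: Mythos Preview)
The paper does not give its own proof of this theorem: it is quoted from the literature with the parenthetical ``(see \cite{G15} or \cite{L16})'' and no further argument. Your proposal essentially reconstructs the route taken in those references: invoke the ergodic dichotomy between $\mu$-mean equicontinuity and $\mu$-mean sensitivity, identify $\mu$-mean equicontinuity with discrete spectrum, rule the latter out by positive entropy, and then pass from $\mu$-mean sensitivity to topological mean sensitivity using that $\supp\mu=X$. The argument is sound, and your triangle-inequality step (splitting $\delta_0$ in half to handle an arbitrary centre $x$ via a nearby $\mu$-generic $x'$) is the standard way to close the gap between the ``$\mu$-a.e.\ $x$'' and ``every $x$'' formulations.

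One small slip worth correcting: the parenthetical ``positive metric entropy --- equivalently, failure of discrete spectrum'' is not accurate; positive entropy \emph{implies} failure of discrete spectrum, but not conversely (any weakly mixing zero-entropy system, such as a Chac\'on-type transformation, witnesses the gap). This does not affect the proof, since you only use the forward implication.
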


\begin{cor} \label{cor:positive-entropy-subsystem}
Let $(X,f)$ be a dynamical system.
If topological entropy of $(X,f)$ is positive,
then there exists a mean sensitive transitive subsystem.
\end{cor}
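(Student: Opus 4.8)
The plan is to combine the variational principle with Theorem~\ref{thm:mean-sensitivity}. Since the topological entropy of $(X,f)$ is positive, the variational principle yields an $f$-invariant Borel probability measure with positive measure-theoretic entropy, and passing through the ergodic decomposition we may assume this measure, call it $\mu$, is ergodic with $h_\mu(f)>0$. The idea is then to pass to the subsystem carried by the support of $\mu$ and check that it is transitive, after which Theorem~\ref{thm:mean-sensitivity} finishes the job.

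Concretely, I would set $Y=\supp(\mu)$. Then $Y$ is a closed $f$-invariant subset of $X$, the measure $\mu$ is an ergodic $f|_Y$-invariant probability measure with $\supp(\mu)=Y$, and $h_\mu(f|_Y)=h_\mu(f)>0$. It remains to verify that $(Y,f|_Y)$ is transitive in the sense used in this paper, i.e. that $Y=\omega_f(x)$ for some $x\in Y$. For this, fix a countable dense subset of $C(X)$ and apply the Birkhoff ergodic theorem along these functions to produce a point $x$ which is generic for $\mu$; since $\mu(Y)=1$, almost every such point lies in $Y$, so we may take $x\in Y$. Genericity forces the orbit of $x$ to enter every open set of positive $\mu$-measure infinitely often, hence $\omega_f(x)\supseteq\supp(\mu)=Y$; conversely $x\in Y$ and $Y$ is closed and invariant, so $\Orb_f(x)\subseteq Y$ and thus $\omega_f(x)\subseteq Y$. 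Therefore $\omega_f(x)=Y$ and $(Y,f|_Y)$ is transitive.

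Finally, applying Theorem~\ref{thm:mean-sensitivity} to $(Y,f|_Y)$, which carries the ergodic, full-support, positive-entropy measure $\mu$, we conclude that $(Y,f|_Y)$ is mean sensitive, and it is the required transitive subsystem. The argument is short, and the only step needing a little care is the verification that a $\mu$-generic point is a transitive point of $Y$; everything else reduces to standard ergodic theory (variational principle, ergodic decomposition, Birkhoff's theorem) together with the already-established Theorem~\ref{thm:mean-sensitivity}, so I do not anticipate any genuine obstacle.
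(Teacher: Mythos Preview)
Your proposal is correct and follows essentially the same route as the paper: apply the variational principle (together with ergodic decomposition) to obtain an ergodic invariant measure $\mu$ with positive entropy, restrict to $Y=\supp(\mu)$, and invoke Theorem~\ref{thm:mean-sensitivity}. The paper's proof is terser and does not spell out why $(Y,f|_Y)$ is transitive, whereas you supply the generic-point argument explicitly; otherwise the two proofs coincide.
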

\begin{proof}
By the variational principle of topological entropy,
there exists an ergodic invariant measure with positive entropy.
Applying Theorem~\ref{thm:mean-sensitivity} to $f$   restricted
to the support of this measure,
we obtain a mean sensitive transitive subsystem.
\end{proof}

\subsection{The structure of \texorpdfstring{$\omega$}{omega}-limit sets for graph maps}
In this subsection
let us recall a few results on the structure of $\omega$-limit sets for graph maps,
which were studied by Blokh in the series of papers \cite{Blokh82,Blokh,Blokh90a,Blokh90b}
and \cite{Blokh90c}.
Here we will follow the notations from~\cite{RS14}. 
Let $f\colon G\to G$ be a graph map. A subgraph $K$ of $G$
is called \emph{periodic} if
there exists a positive integer $k$ such that $K,f(K),\dotsc,f^{k-1}(K)$
are pairwise disjoint and $f^k(K)=K$.
In such a case, $k$ is called the \emph{period} of $K$, and
$K$ is a \emph{periodic (set) of period~$k$}.
The set $Orb_f(K)=\bigcup_{i=0}^k f^i(K)$ is called a \emph{cycle}
of graphs of period $k$.

For an infinite $\omega$-limit set $\omega_f(x)$, define
\begin{equation}
  \mathcal{C}(x):=\{X\subset G\colon X\text{ is a cycle of graphs and }\omega_f(x)\subset X\}.
\end{equation}
Note that $\bigcap_{n=1}^\infty f^n(G)$ is always a $1$-periodic cycle of graphs contained in $\mathcal{C}(x)$, hence always $\mathcal{C}(x)\neq \emptyset$.
Denote
$$
\mathcal{C}_P(x)=\sup\{ n\in \N : \text{ there is a subgraph }K \text{ with period }n\text{ and }Orb_f(K)\in \mathcal{C}(x)\}
$$
and observe that $\mathcal{C}_P(x)\in \N\cup \{+\infty\}$.

\begin{defn}
An infinite $\omega$-limit set $\omega_f(x)$ of a graph map is called a \emph{solenoid}
if $\mathcal{C}_P(x)=+\infty$.
\end{defn}

\begin{lem}[{\cite[Lemma~11]{RS14}}] \label{lem:solenoid}
If $\omega_f(x)$ is a solenoid, then
there exists a sequence of cycles of graphs $(X_n)_{n\in\N}$ with periods $(k_n)_{n\in \N}$ such that
\begin{enumerate}
  \item $(k_n)_{n\in \N}$ is strictly increasing, $k_1\geq 1$ and $k_{n+1}$ is a multiple of $k_n$ for all $n\geq 1$;
  \item for all $n\geq 1$, $X_{n+1}\subset X_n$ and every connected component of $X_n$
  contains the same number (equal to $k_{n+1}/k_n\geq 2$) of components of $X_{n+1}$;
  \item $\omega_f(x)\subset \bigcap_{n\geq 1}X_n$ and $\omega_f(x)$ does not contain periodic points.
\end{enumerate}
\end{lem}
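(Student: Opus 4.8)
The plan is to build the sequence inductively, at each step refining the current cycle of graphs by passing to a suitable power of $f$ on one of its components. Throughout write $\omega=\omega_f(x)$ and recall two elementary facts. First, $\omega$ is strongly invariant, $f(\omega)=\omega$. Second, if $X\in\mathcal C(x)$ is a cycle of graphs of period $k$ with components $C_0,\dots,C_{k-1}$ cyclically permuted by $f$ (say $f(C_i)=C_{i+1}$, indices mod $k$), then every component meets $\omega$: since $\omega$ is infinite it meets at least one $C_i$, and $f(\omega)=\omega$ together with $f(C_i)=C_{i+1}$ gives $f(\omega\cap C_i)=\omega\cap C_{i+1}$, so $\omega$ meets all of them and the sets $\omega\cap C_i$ are nonempty, pairwise disjoint, relatively clopen subsets of $\omega$. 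In particular $X$ induces a factor $\pi_X\colon(\omega,f)\to(\Z/k\Z,+1)$ sending $\omega\cap C_i$ to $i$.

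The inductive step. Suppose we already have $X_n\in\mathcal C(x)$ of period $k_n$ with components $C_0,\dots,C_{k_n-1}$. Put $g=f^{k_n}$, which maps the subgraph $C_0$ into itself, and set $W=\omega\cap C_0$, an infinite $g$-invariant set. Suppose we can find a $g$-periodic subgraph $L\subset C_0$ of some period $r\ge 2$ (that is, $L,g(L),\dots,g^{r-1}(L)$ pairwise disjoint and $g^r(L)=L$) whose $g$-cycle $\Orb_g(L)=\bigcup_{s=0}^{r-1}g^s(L)$ contains $W$. Then the $f$-images $f^j(g^s(L))$, for $0\le j<k_n$ and $0\le s<r$, are pairwise disjoint: those with different $j \bmod k_n$ lie in different components $C_j$, while if $f^j(g^sL)$ and $f^j(g^tL)$ met for $s\ne t$, applying $f^{k_n-j}$ would give $g^{s+1}L\cap g^{t+1}L\ne\emptyset$, contradicting the choice of $L$. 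Hence $X_{n+1}:=\bigcup_{j,s}f^j(g^s(L))$ is a cycle of graphs of period $k_{n+1}:=k_n r$ contained in $X_n$, with $k_n\mid k_{n+1}$, and each component $C_j$ of $X_n$ contains exactly the $r=k_{n+1}/k_n\ge2$ subgraphs $f^j(g^sL)$. Finally $\omega\cap C_j=f^j(W)\subset f^j(\Orb_g(L))\subset X_{n+1}$, so $\omega\subset X_{n+1}$, and all the properties in (1)--(2) are produced, provided the refinement $L$ exists at every stage.

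Guaranteeing the refinement is the heart of the matter and the step I expect to be the main obstacle. I would prove the contrapositive: if no such $L$ of period $r\ge2$ exists for $g=f^{k_n}$ on $C_0$, then $k_n$ is already the largest possible period, i.e.\ $\mathcal C_P(x)\le k_n<+\infty$, contradicting the solenoid hypothesis $\mathcal C_P(x)=+\infty$. To make this implication work one needs the comparison of cycles of graphs around a common infinite $\omega$-limit set: any cycle $Y\in\mathcal C(x)$ of period $m$ induces, exactly as above, a factor $\pi_Y\colon(\omega,f)\to(\Z/m\Z,+1)$, and the content of Blokh's structure theory is that any two such partitions of $\omega$ are comparable. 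Consequently the periods arising form a chain under divisibility, and the impossibility of splitting $W$ nontrivially forces $m\mid k_n$ for every $Y$, hence $\mathcal C_P(x)\le k_n$. This comparability is where the topology of the graph genuinely enters — finiteness of the edge and branch structure together with connectedness of the components rule out the "crossing" partitions that a purely abstract system could support — and it is the only non-formal ingredient; everything else is bookkeeping.

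It remains to record the two closing assertions of (3). By construction $X_{n+1}\subset X_n$ and $\omega\subset X_n$ for every $n$, whence $\omega\subset\bigcap_{n\ge1}X_n$. To see that $\omega$ contains no periodic point, suppose $p\in\omega$ had least period $q$ under $f$. For each $n$ the factor $\pi_{X_n}$ satisfies $\pi_{X_n}(p)+q=\pi_{X_n}(f^q p)=\pi_{X_n}(p)$ in $\Z/k_n\Z$, so $k_n\mid q$ for all $n$; but $k_{n+1}/k_n\ge2$ forces $k_n\to\infty$, which is impossible. Hence $\omega$ has no periodic points. Taking $X_1=\bigcap_{n\ge1}f^n(G)$, the $1$-periodic cycle already noted to lie in $\mathcal C(x)$, as the base case (so $k_1\ge1$) completes the construction.
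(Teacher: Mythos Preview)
The paper does not prove this lemma; it is quoted from \cite[Lemma~11]{RS14} and stated without argument, so there is no in-paper proof to compare against.

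Your inductive scheme is sound once the refinement $L$ is available, and the closing arguments for item (3) are clean. The gap you flag in the refinement step is real, and in fact slightly wider than your sketch indicates. What you argue (deferring to Blokh) is that the partitions of $\omega$ induced by cycles in $\mathcal C(x)$ are totally ordered by refinement, so their periods form a chain under divisibility. Granting that, a cycle $Y\in\mathcal C(x)$ of period $m>k_n$ with $k_n\mid m$ yields a finer clopen partition of $W=\omega\cap C_0$ into $m/k_n$ pieces. But your inductive step needs an honest $g$-periodic \emph{subgraph} $L\subset C_0$ of period $r\ge 2$, not merely a finer partition of $W$: the connected components of $Y$ need not be contained in $C_0$ (they are only known to meet $\omega$ inside $C_0$), so they do not directly supply such an $L$, and your contrapositive ``impossibility of splitting $W$ forces $m\mid k_n$'' does not follow from partition comparability alone. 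What actually closes this---and is the substantive content of the cited lemma, going back to Blokh---is a statement at the level of the cycles themselves (roughly, that for $X,Y\in\mathcal C(x)$ one can manufacture $Z\in\mathcal C(x)$ with $Z\subset X\cap Y$ of period at least that of $Y$), after which $L$ is read off as a component of $Z$ lying in $C_0$. Your outline has the right architecture but stops just short of this point.
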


\begin{defn}
Let $f\colon X\to X$ and $g\colon Y\to Y$ be two continuous maps.
A continuous map $\varphi\colon X\to Y$ is a \emph{semi-conjugacy} between $f$ and $g$
if $\varphi$ is onto and $\varphi\circ f=g\circ \varphi$.
If in addition $\varphi$ is a homeomorphism, then $\varphi$ is called a \emph{conjugacy} between $f$ and $g$.

Assume further that $K\subset X$ is a closed set such that $f(K)\subset K$.
We say that a semi-conjugacy $\varphi$ between $f$ and $g$ is an \emph{almost conjugacy}
between $f|_K$ and $g$ if it satisfies the following conditions:
\begin{enumerate}
  \item $\varphi(K)=Y$;
  \item for any $y\in Y$, $\varphi^{-1}(y)$ is a connected subset of $X$;
  \item for any $y\in Y$, $\varphi^{-1}(y)\cap K=\partial \varphi^{-1}(y)$,
  where $\partial$ denotes the boundary in $X$.
\end{enumerate}
\end{defn}

Observe that when $f$ is a graph map then in the above case, since $\varphi^{-1}(y)$ is connected, all sets $\partial \varphi^{-1}(y)$ are finite and their cardinality is uniformly bounded, that is
 there is $N>0$ such that $1\leq |\varphi^{-1}(y)\cap K|\leq N$ for every $y\in Y$.

\begin{defn}
Let $X$ be a finite union of subgraphs of $G$ such that $f(X)\subset X$. We define
\begin{equation}
  E(X,f)=\{y\in X\colon \text{ for any neighborhood } U \text{ of }y \text{ in } X,
  \overline{Orb_f(U)}=X\}.
\end{equation}
\end{defn}

The following result was first proved by Blokh in \cite{Blokh90a}.
Here we adopt the version which
combines statements of \cite[Lemmas~13 and 14]{RS14} and \cite[Proposition~16]{RS14}.

\begin{lem}\label{lem:non-solenoid}
If an infinite $\omega$-limit set $\omega_f(x)$ is not a solenoid,
then there exists a cycle of graphs $X\in\mathcal{C}(x)$ such that
\begin{enumerate}
  \item for any $Y\in \mathcal{C}(x)$, $X\subset Y$;
  \item the period of $X$ is maximal among the periods of all cycles in $\mathcal{C}(x)$.
\end{enumerate}
If we put $E=E(X,f)$ then:
\begin{enumerate}
  \item $\omega_f(x)\subset E\subset X$;
  \item $E$ is a perfect set and $f|_E$ is transitive;
  \item there exists a transitive map $g\colon Y\to Y$,
  where $Y$ is a finite union of graphs, and a semi-conjugacy $\varphi\colon X\to Y$
  between $f|_X$ and $g$ which almost conjugates $f|_E$ and $g$.
\end{enumerate}
\end{lem}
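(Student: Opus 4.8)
The plan is to reconstruct Blokh's structure theory for $\omega$-limit sets of graph maps in three stages: locate the finest cycle $X$ in $\mathcal{C}(x)$, establish the elementary properties of $E=E(X,f)$, and finally collapse the complement of $E$ to produce the almost conjugacy. Throughout I would use one basic sublemma: $\omega_f(x)$ meets every connected component of any cycle in $\mathcal{C}(x)$. Indeed, $f$ permutes the components of a cycle cyclically and onto, and $f(\omega_f(x))=\omega_f(x)$; so if $\omega_f(x)$ missed one component, then $\omega_f(x)$ would miss the previous one as well, and by induction all of them, contradicting that $\omega_f(x)$ is a nonempty subset of the disjoint union of the components.

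First I would produce $X$ and verify items (1) and (2) of the first list. Since $\omega_f(x)$ is not a solenoid, $\mathcal{C}_P(x)=p<\infty$, so there is a cycle of graphs in $\mathcal{C}(x)$ realizing this supremal period $p$; call it $X$. Given any $Y\in\mathcal{C}(x)$ of period $q$, I would show each component of $X$ lies inside a single component of $Y$: a component of $X$ is connected and carries points of $\omega_f(x)$ (by the sublemma), and if it straddled two components of $Y$ one could manufacture a cycle of strictly larger period, contradicting the maximality of $p=\mathcal{C}_P(x)$. Consequently $X$ refines $Y$, hence $X\subset Y$ and $q\mid p$; this gives properties (1) and (2).

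Next I would treat the assertions about $E=E(X,f)$. Directly from the definition, $E$ is closed and $f$-invariant, and $E\subset X$ is immediate. The substantive point is $\omega_f(x)\subset E$: given $z\in\omega_f(x)$ and a neighborhood $U$ of $z$ in $X$, the set $W:=\overline{\Orb_f(U)}$ is closed with $f(W)\subset W$, and since $\Orb_f(x)$ enters $U$ infinitely often a tail of the orbit lies in $\Orb_f(U)$, whence $\omega_f(x)\subset W$. I would then argue $W=X$ by minimality: a proper closed invariant $W\subsetneq X$ still containing $\omega_f(x)$ would yield a cycle of graphs in $\mathcal{C}(x)$ strictly finer than $X$, contradicting item (1). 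Thus every $z\in\omega_f(x)$ lies in $E$. For item (2) of the second list, transitivity of $f|_E$ comes from the defining covering property of $E$ (the orbit of any relative neighborhood of a point of $E$ is dense in $X$, hence in $E$), which furnishes a transitive point; and since $E\supset\omega_f(x)$ is infinite, a transitive system with an isolated point would be a single finite periodic orbit, so $E$ must be perfect.

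The hard part will be item (3), the almost conjugacy, which is the core of the structure theorem. Here I would collapse each connected component of the open set $X\setminus E$ — a maximal free subarc whose interior misses $E$ — to a point, and let $\varphi\colon X\to Y$ be the resulting quotient map with $g$ the induced map. The main obstacle is to show that $Y$ is genuinely a finite union of graphs and that $g$ is a well-defined, transitive graph map: this requires controlling the complementary domains of $E$ in $X$, showing there are only finitely many up to the dynamics and that each is a tree, so that collapsing preserves one-dimensionality and finite branching, and then transporting transitivity of $f|_E$ to $g$. Finally I would verify the three almost-conjugacy conditions, namely $\varphi(E)=Y$, connectedness of each fiber $\varphi^{-1}(y)$, and $\varphi^{-1}(y)\cap E=\partial\varphi^{-1}(y)$; the last is exactly the statement that the collapsed sets meet $E$ precisely in their endpoints, which is built into the choice of fibers as closures of complementary components. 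This collapsing construction, together with the finiteness and tree structure of the complementary domains, is where the real work lies, and it is precisely the content extracted from \cite{Blokh90a} and \cite[Lemmas~13, 14 and Proposition~16]{RS14}.
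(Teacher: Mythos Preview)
The paper does not actually prove this lemma: it is stated without proof and attributed to Blokh, in the formulation of \cite[Lemmas~13, 14 and Proposition~16]{RS14}. Your proposal therefore goes beyond what the paper does; you are sketching the proof that lives in the cited references, and your outline follows that proof faithfully (locate the minimal cycle, show $\omega_f(x)\subset E$ via minimality of $X$, collapse complementary domains to build the almost conjugacy).

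Two places in your sketch deserve tightening if you intend this as a self-contained argument rather than a pointer to \cite{RS14}. First, in producing the minimal cycle you need the fact that any two cycles in $\mathcal{C}(x)$ admit a common refinement in $\mathcal{C}(x)$ whose period is a common multiple of theirs (this is \cite[Lemma~13]{RS14}); your ``straddling'' argument presupposes $X\subset Y$ componentwise, which is what you are trying to prove, so you should instead intersect and invoke maximality of $p$. Second, in showing $\omega_f(x)\subset E$ you pass from the closed invariant set $W=\overline{\Orb_f(U)}$ to a cycle of graphs in $\mathcal{C}(x)$; $W$ itself need not be a cycle of graphs, so you must explain how to extract one (e.g.\ by taking $\bigcap_{n\ge 0} f^n(W)$ or by using that a neighborhood in a graph contains an arc whose forward images are connected). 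These are exactly the technical points handled in \cite[Lemma~14]{RS14}, so your closing citation covers them, but as written the intermediate reasoning has gaps.
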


\begin{defn}
Assume that $\omega_f(x)$ is an infinite  set but not a solenoid.
Let $X$ be the minimal (in the sense of inclusion) cycle of graphs containing $\omega_f(x)$ and denote $E=E(X,f)$.
We say that $E$ is a \emph{basic set} if $X$ contains a periodic point, and \emph{circumferential set} otherwise.
\end{defn}

The next result is due to Blokh \cite{Blokh}.
Here we adopt the version which
combines statements of \cite[Theorems 20 and 22]{RS14}.
\begin{thm} \label{thm:basic-set-and-rotation}
Let $f\colon G\to G$ be a graph map.
\begin{enumerate}
  \item If $f$ admits a basic set, then the topological entropy of $f$ is positive.
  \item If $f$ is transitive map without periodic points then
  $f$ is conjugate to an irrational rotation of the circle.
\end{enumerate}
\end{thm}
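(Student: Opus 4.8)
The plan is to treat both assertions as two faces of a single dichotomy for transitive graph maps: a transitive graph map either has a periodic point, in which case its entropy is positive, or has none, in which case it is an irrational rotation of the circle (and hence has zero entropy). Statement (2) is exactly the periodic-point-free half, while statement (1) will follow from the periodic-point half after transporting a periodic point through the almost conjugacy supplied by Lemma~\ref{lem:non-solenoid}.

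For (2) I would first promote transitivity to minimality. A transitive graph map possesses a minimal subsystem, and one shows that in the absence of periodic points the transitive point is itself uniformly recurrent, so that the whole system is minimal; the recurrence structure of graph maps forces any proper invariant subsystem of a transitive but non-minimal graph map to carry a periodic point, which is excluded here. Next I would rule out branch points: the set of branch points of $G$ is finite and, by minimality, the orbit closure of each branch point is all of $G$, so no branch point can have a finite orbit; but a local-degree argument at a branch point shows its orbit must remain inside the finite set of branch points, forcing periodicity and a contradiction. Hence $G$ has no branch points and, being a non-degenerate compact connected graph carrying a fixed-point-free minimal map, it must be a circle. Finally, a minimal self-map of the circle is an orientation-preserving homeomorphism with irrational rotation number, so by the Poincar\'e classification it is semi-conjugate, and by minimality conjugate, to the corresponding irrational rotation; in particular its topological entropy is zero.

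For (1) I would exploit the structure provided by Lemma~\ref{lem:non-solenoid}. Since $\omega_f(x)$ is not a solenoid and $E=E(X,f)$ is a basic set, $X$ contains a periodic point and there is a transitive map $g\colon Y\to Y$ on a finite union of graphs together with a semi-conjugacy $\varphi\colon X\to Y$ between $f|_X$ and $g$. As $\varphi\circ f=g\circ\varphi$, the image under $\varphi$ of a periodic point of $f$ lying in $X$ is a periodic point of $g$; thus $g$ is a transitive graph map possessing a periodic point. By the periodic-point half of the dichotomy (established next) we get $h(g)>0$. Because $(Y,g)$ is a factor of $(X,f|_X)$ and topological entropy does not increase under factor maps, $h(f)\ge h(f|_X)\ge h(g)>0$, which is the desired conclusion.

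It remains to establish the periodic-point half: a transitive graph map $g$ with a periodic point has positive entropy. Passing to the cyclic (spectral) decomposition of a transitive graph map, one finds $d\ge 1$ and a subgraph $Y_0$ with $g^d(Y_0)=Y_0$ and $g^d|_{Y_0}$ totally transitive, and the periodic orbit of $g$ meets $Y_0$, so $g^d|_{Y_0}$ is totally transitive and carries a periodic point. A totally transitive graph map with a periodic point is topologically mixing, since it cannot be a (periodic-point-free) irrational rotation. Topological mixing on the non-degenerate continuum $Y_0$ then yields a horseshoe: choosing two disjoint non-degenerate subarcs $A,B\subset Y_0$, mixing provides an $N$ with $g^{Nd}(A)\supset A\cup B$ and $g^{Nd}(B)\supset A\cup B$, so $g^{Nd}$ semi-conjugates onto the full two-shift and $h(g)\ge\tfrac{1}{Nd}\log 2>0$. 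I expect the construction of this horseshoe --- equivalently, the verification that a totally transitive graph map with a periodic point is genuinely mixing rather than a rotation --- to be the main obstacle, since this is precisely where the one-dimensional folding responsible for positive entropy must be extracted from the interaction of the periodic orbit with the transitive dynamics.
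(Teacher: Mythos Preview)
The paper does not prove Theorem~\ref{thm:basic-set-and-rotation}; it is quoted from Blokh's work, in the formulation of \cite[Theorems~20 and~22]{RS14}. So there is no in-paper argument to compare against, and the question is whether your outline stands on its own.

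There is a genuine gap in your proof of (2). Your elimination of branch points rests on the claim that ``a local-degree argument at a branch point shows its orbit must remain inside the finite set of branch points.'' This is true for homeomorphisms (they preserve the local valency), but it fails for continuous graph maps: nothing prevents a branch point from being mapped into the interior of an edge. Since at this stage you have only a continuous transitive map without periodic points, you are not entitled to assume $f$ is a homeomorphism --- that is essentially the conclusion you are after. The standard route through Blokh's structure theory establishes first that in this situation the map is (semi\nobreakdash-)conjugate to a circle rotation and only then reads off injectivity; your argument tries to get injectivity-type consequences before this and does not succeed. Relatedly, your promotion of transitivity to minimality (``any proper invariant subsystem of a transitive but non-minimal graph map carries a periodic point'') is asserted but not argued, and it is not obvious without already invoking some of the structure theory you are trying to avoid.

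For (1) your strategy is sound in outline, but the horseshoe step is oversimplified. Topological mixing gives $g^{Nd}(A)\cap U\neq\emptyset$ for every open $U$ and large $N$; it does \emph{not} immediately give $g^{Nd}(A)\supset A\cup B$. On an interval one upgrades from ``dense intersection'' to ``covering'' via connectedness and the order structure; on a graph this upgrade is precisely the content of the Llibre--Misiurewicz horseshoe theorem \cite{LM}, and it is not a one-line consequence of mixing. You flag this as the main obstacle, which is accurate, but as written the argument is incomplete at exactly the point where the positive entropy is produced. If you want a self-contained route, the cleanest fix is to cite \cite{LM} directly: a transitive graph map with a periodic point has positive entropy because some iterate has a horseshoe; this bypasses the mixing discussion entirely and is what the paper effectively relies on elsewhere (cf.\ Proposition~\ref{prop:Devaney-chaos}).
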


\section{Local mean equicontinuity and M\"obius disjoint conjecture}
\subsection{Proof of Theorem~\ref{thm:main1}}
Inspired by the definition of local equicontinuity in \cite{GW00},
we will consider the following local version of mean equicontinuity.
\begin{defn}
A dynamical system $(X,f)$ is called \emph{locally mean equicontinuous}
if for every $x\in X$ the dynamical system $(\overline{\Orb_f(x)},f)$ is mean equicontinuous.
\end{defn}
By Corollary~\ref{cor:positive-entropy-subsystem},
we know that if a dynamical system is locally mean equicontinuous then it has zero topological entropy, therefore
to prove Theorem~\ref{thm:main1} we only need to prove the sufficiency.
To do this, we shall prove that $(\overline{\Orb_f(x)},f)$ is mean equicontinuous for all $x\in G$.
According to the structure of $\omega$-limit sets of graph maps,
we divide the proof into three cases: $\omega_f(x)$ is finite,
$\omega_f(x)$ is a solenoid and $\omega_f(x)$ is infinite but not a solenoid.

We start with the following simple observation covering the first of three possible cases mentioned above. We present the proof for the convenience of the reader.

\begin{lem}\label{lem:finite}
Let $(X,f)$ be a topological dynamical system and let $x\in X$.
If $\omega_f(x)$ is finite, then $(\overline{\Orb_f(x)},f)$ is equicontinuous.
\end{lem}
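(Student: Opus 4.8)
The plan is to verify the $\eps$--$\delta$ definition of equicontinuity directly on $Y:=\overline{\Orb_f(x)}=\Orb_f(x)\cup\omega_f(x)$. First I would dispose of the trivial case in which $\Orb_f(x)$ is finite: then $Y$ is a finite metric space, and taking $\delta$ below the smallest distance between distinct points of $Y$ forces $d(a,b)<\delta\Rightarrow a=b$, so there is nothing to prove. Hence I would assume $\Orb_f(x)$ is infinite and write $y_n=f^n(x)$. Then the $y_n$ are pairwise distinct (otherwise $x$ is eventually periodic), and, since $X$ is compact, $f(\omega_f(x))=\omega_f(x)$, so $\omega_f(x)$ is a finite set on which $f$ acts as a permutation $\sigma$ of its elements $P=\{p_0,\dots,p_{k-1}\}$ (a union of periodic orbits). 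No $y_n$ lies in $P$ (again this would make $x$ eventually periodic), so $P$ is exactly the set of accumulation points of the set $Y$; in particular each $y_n$ is isolated in $Y$, and $d(y_n,P)\to 0$.

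Given $\eps>0$, I would then use uniform continuity of $f$ on the compact set $Y$ to choose $\eta>0$ so small that $\eta<\eps/2$, that $\eta$ is less than one third of the minimal distance between distinct points of $P$, and that $d(a,b)<\eta$ implies $f(a)$ and $f(b)$ are still within that one-third distance of each other; then I would pick $N_1$ with $d(y_n,P)<\eta$ for all $n\ge N_1$. For $n\ge N_1$ the point $y_n$ has a unique nearest point $p_{r(n)}$ in $P$, and the choice of $\eta$ forces $r(n+1)=\sigma(r(n))$, hence $r(n+j)=\sigma^{j}(r(n))$ for all $j\ge 0$. The observation I would isolate as the heart of the argument is that for every $n\ge N_1$ the forward orbit of $y_n$ uniformly shadows the orbit of $p_{r(n)}$:
\[
d\!\left(f^j(y_n),f^j(p_{r(n)})\right)=d\!\left(y_{n+j},p_{r(n+j)}\right)=d(y_{n+j},P)\le\eta\qquad(j\ge 0).
\]
This replaces any appeal to Lyapunov stability of $P$, and works precisely because \emph{every} point of $Y$ lies on an orbit converging to $P$.

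It would then remain to fix $\delta>0$ smaller than $\eta$, smaller than one third of the pairwise distances in $P$, and smaller than the isolation radii in $Y$ of the finitely many iterates $y_0,\dots,y_{N_1-1}$ (if any), and to run a short case analysis on a pair $a,b\in Y$ with $d(a,b)<\delta$. If $a$ or $b$ is one of those finitely many "transient" iterates, the choice of $\delta$ forces $a=b$. If $a,b\in P$ are distinct, then $d(a,b)\ge\delta$, so this does not occur. If $a=p_i\in P$ and $b=y_m$ with $m\ge N_1$, then $d(a,b)<\delta$ forces $i=r(m)$, and the shadowing estimate gives $d(f^j(a),f^j(b))=d(y_{m+j},P)\le\eta<\eps$ for all $j$. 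Finally, if $a=y_m$ and $b=y_{m'}$ with $m,m'\ge N_1$, then $d(a,b)<\delta$ together with $d(y_m,p_{r(m)})<\eta$ forces $r(m)=r(m')$, so both orbits shadow the same orbit in $P$ and $d(f^j(a),f^j(b))\le 2\eta<\eps$. This would establish equicontinuity of $(Y,f)$.

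The one delicate point, and the step I expect to be the real obstacle, is the one flagged above: a periodic orbit arising as a finite $\omega$-limit set need not be Lyapunov stable, so one cannot control the iterates of a point near $P$ by continuity alone. The resolution is that inside $Y$ the only "nearby points" that can occur are either the finitely many isolated transient iterates $y_0,\dots,y_{N_1-1}$, which are handled by isolation, or iterates $y_m$ with $m\ge N_1$, whose entire forward orbit is already trapped in the $\eta$-neighbourhood of $P$; no uniform estimate on iterates of $f$ away from $P$ is ever needed.
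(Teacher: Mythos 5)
Your proof is correct and follows essentially the same route as the paper: the paper quotes the fact that a finite $\omega$-limit set is a periodic orbit which the orbit of $x$ approaches in phase ($f^{pn+i}(x)\to f^i(y)$) and states that this easily implies equicontinuity, whereas you prove that phase-locked (shadowing) convergence directly via the nearest-point index $r(n)$ and the permutation $\sigma$ on $\omega_f(x)$, and then write out the implied case analysis using that every point of $\overline{\Orb_f(x)}$ outside the limit set is isolated. The ``delicate point'' you flag is precisely what legitimizes the paper's one-line conclusion, and I see no gap in your argument.
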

\begin{proof}
It is well known that if $\omega_f(x)$ is finite then it must be a finite periodic orbit
(see e.g. \cite[Lemma~1.4]{R15}).
There is a periodic point $y\in X$ with periodic $p$ such that $\lim_{n\to\infty}f^{pn+i}(x)=f^i(y)$
for $i=0,1,\dotsc,p-1$, which easily implies that $(\overline{\Orb_f(x)},f)$ is equicontinuous.
\end{proof}

The following result covers the second case.
\begin{lem}\label{lem:solenoid2}
Let $f\colon G\to G$ be a graph map and $x\in G$.
If $\omega_f(x)$ is a solenoid, then $(\overline{Orb(x,f)},f)$ is mean equicontinuous.
\end{lem}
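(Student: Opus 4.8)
The plan is to exploit the tower of periodic graphs behind the solenoid: at level $N$ of the tower one gets $k_N$ uniformly separated pieces of total length at most $\mathrm{length}(G)$, so a Cesàro average over one period of length $k_N$ is of size $O(1/k_N)\to0$; the task is to transport this to an averaged estimate for pairs of nearby points of $\overline{\Orb_f(x)}$.

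First, by Lemma~\ref{lem:solenoid} fix cycles of graphs $X_1\supseteq X_2\supseteq\cdots$ with periods $k_1<k_2<\cdots$ and $\omega_f(x)\subseteq\bigcap_N X_N$, where $X_N$ has exactly $k_N$ connected components $C_0^{(N)},\dots,C_{k_N-1}^{(N)}$ cyclically permuted by $f$ (say $f(C_i^{(N)})=C_{i+1}^{(N)}$, indices mod $k_N$). As $\omega_f(x)$ is invariant and meets one component it meets all of them, and the $C_i^{(N)}$ are pairwise disjoint subcontinua, so $\sum_i\diam C_i^{(N)}\le \mathrm{length}(G)=:L$; since $k_N\to\infty$,
\[ \frac1{k_N}\sum_{i=0}^{k_N-1}\diam C_i^{(N)}\ \longrightarrow\ 0 . \]

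Next, fix a level $N$, let $2\sigma=\min_{i\ne j}d(C_i^{(N)},C_j^{(N)})>0$, and choose $\eta\in(0,\sigma)$ so small that $f$ moves no two $\eta$-close points more than $\sigma$ apart. For $p$ with $d(p,\omega_f(x))<\eta$, all of $\omega_f(x)$ that lies $\eta$-close to $p$ sits in one component (pairwise distances are $<2\eta<2\sigma$), whose index I call $\beta(p)\in\Z/k_N\Z$; note $d(p,C_{\beta(p)}^{(N)})<\eta$. The key (and most geometric) point is the \emph{partial equivariance}: if $p$ and $f(p)$ are both $\eta$-close to $\omega_f(x)$, then $\beta(f(p))=\beta(p)+1$. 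This follows because a witness $q\in\omega_f(x)\cap C_{\beta(p)}^{(N)}$ for $p$ is sent by $f$ into $C_{\beta(p)+1}^{(N)}$ and lands within $\sigma$ of $f(p)$, while $f(p)$ is within $\eta$ of $C_{\beta(f(p))}^{(N)}$, so $d(C_{\beta(p)+1}^{(N)},C_{\beta(f(p))}^{(N)})<\sigma+\eta<2\sigma$, which by the uniform separation of components forces $\beta(f(p))=\beta(p)+1$.

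Now I use that the orbit of $x$ converges to $\omega_f(x)$, so there is $M$ with $d(f^m(x),\omega_f(x))<\eta$ for $m\ge M$; since $\overline{\Orb_f(x)}=\Orb_f(x)\cup\omega_f(x)$, this gives the uniform statement $d(f^k(p),\omega_f(x))<\eta$ for \emph{every} $p\in\overline{\Orb_f(x)}$ and every $k\ge M$. Given $\varepsilon>0$, pick $N$ with $\frac1{k_N}\sum_i\diam C_i^{(N)}<\varepsilon/2$, then $\eta$ as above with also $2\eta<\varepsilon/2$, then $\delta>0$ with $d(a,b)<\delta\Rightarrow d(f^M(a),f^M(b))<\eta$ (uniform continuity of $f^M$); the parameters $\sigma,\eta,M,\delta$ are all chosen after $N$, so there is no circularity. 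For $y,z\in\overline{\Orb_f(x)}$ with $d(y,z)<\delta$ one gets $\beta(f^M(y))=\beta(f^M(z))$, hence by the partial equivariance $\beta(f^k(y))=\beta(f^M(y))+(k-M)=\beta(f^k(z))=:j_k$ for all $k\ge M$, hence $d(f^k(y),f^k(z))\le 2\eta+\diam C_{j_k}^{(N)}$ for $k\ge M$; since $j_k$ cycles through $\Z/k_N\Z$ with period $k_N$, averaging (and dropping the harmless first $M$ terms) yields
\[ \limsup_{n\to\infty}\frac1n\sum_{k=0}^{n-1}d(f^k(y),f^k(z))\ \le\ 2\eta+\frac1{k_N}\sum_{i}\diam C_i^{(N)}\ <\ \varepsilon, \]
so $(\overline{\Orb_f(x)},f)$ is mean equicontinuous.

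The crux, and the step I expect to be the main obstacle, is the partial equivariance of the localized coding $\beta$: one must control, under iteration, which component of $X_N$ a point that is merely \emph{close} to $\omega_f(x)$ is associated with, and this is exactly where the metric geometry of the graph (the uniform separation $2\sigma$ of the $k_N$ components) enters. Two ancillary points to verify against \cite{RS14}: that Lemma~\ref{lem:solenoid} supplies cycles $X_N$ with exactly $k_N$ cyclically permuted connected components and $k_N\to\infty$ (otherwise one replaces $k_N$ by the length of the $f$-cycle of components of $X_N$ that meets $\omega_f(x)$, which still tends to infinity), and the uniform-in-$p$ statement $d(f^k(p),\omega_f(x))<\eta$ for $k\ge M$, which is where the transient part $\Orb_f(x)\setminus\omega_f(x)$ of the orbit is absorbed.
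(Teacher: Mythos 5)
Your proof is correct and follows essentially the same route as the paper's: Lemma~\ref{lem:solenoid} supplies cycles with $k_N\to\infty$ pairwise disjoint components whose diameters average to $O(1/k_N)$, and the uniform separation of these components plus uniform continuity over finitely many iterates is used to synchronize nearby points of the orbit closure, exactly as in the paper (there the separation is called $r_n$ and the synchronization is achieved by forcing both points into $X_n$ within $s$ steps, rather than via your coding $\beta$ of the $\eta$-neighborhood of $\omega_f(x)$). One small quantitative fix: in the step $\beta(f^M(y))=\beta(f^M(z))$ the two witnesses in $\omega_f(x)$ are only within $3\eta$ of each other, so you need $\eta<\tfrac{2}{3}\sigma$ (or a correspondingly smaller $\delta$), not merely $\eta<\sigma$; with that adjustment the argument closes.
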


\begin{proof}
	We follow the idea from the proof of \cite[Proposition 6.8]{LTY15}.
	Since $\omega_f(x)$ is a solenoid, by Lemma~\ref{lem:solenoid}
	there exists a sequence of cycles of graphs $(X_n)_{n\in \N}$ with periods $(k_n)_{n\in \N}$ such that
	$\lim_{n\to\infty} k_n=+\infty$.
	Denote the sum of lengths of all edges in $G$ by~$M$.
	Clearly, for every $\eps>0$ and any graphs $G_1,\ldots, G_k\subset G$ with disjoint interiors we have
	$|\{i : \diam (G_i)\geq \eps\}|\leq M/\eps$.
	
	For every $n\in\N$, sets $X_n,f(X_n),\dotsc,f^{k_n-1}(X_n)$ are pairwise disjoint closed subgraphs of $G$
	and for sufficiently large $n$ we have $k_n\geq \frac{2M}{\eps^2}$.
	Then for any $u,v\in X_n$, we have
	\begin{eqnarray*}
		\limsup_{N\to\infty}\frac{1}{N}\sum_{k=0}^{N-1}d(f^k(u),f^k(v))
		&\leq& \limsup_{N\to\infty}\frac{1}{N}\sum_{k=0}^{N-1}\diam(f^k(X_n))\\
		&\leq& \eps+\limsup_{N\to\infty}\frac{1}{N}|\{k<N : \diam(f^k(X_n))>\eps\}|\\
		&\leq& \eps+\limsup_{N\to\infty}\frac{1}{N} \left(\frac{N}{k_n}+1\right)\cdot\frac{M}{\eps}
		\;\leq\; \eps+\frac{2}{k_n}\cdot\frac{M}{\eps}\\
		&\leq& 2\eps.
	\end{eqnarray*}
	
Since $\omega_f(x)$ is infinite and the boundary of $\bigcup_{i=0}^{k_n-1}f^i(X_n)$ is a finite set,
	we must have $\Int(f^i(X_n))\cap \omega_f(x)\neq \emptyset$. Therefore
	there exists an integer $s\geq k_n$ such that $f^s(x)\in  X_n$.
	Let $r_n=\min_{i\neq j\; (\text{mod } k_n)}d(f^i(X_n),f^j(X_n))>0$.
	There exists a $\delta>0$ such that if $d(u,v)<\delta$ then $d(f^i(u),f^i(v))<\frac{r_n}{2}$ for $i=0,1,\ldots,s$.
This implies that if $u,v\in (\overline{Orb(x,f)},f)$ and $d(u,v)<\delta$,
then there exists an integer $m\in (0,s]$ such that $f^{m}(u),f^{m}(v)\in X_n$ and as a consequence
	\[\limsup_{N\to\infty}\frac{1}{N}\sum_{k=0}^{N-1}d(f^k(u),f^k(v))\leq 2\eps.\]
	This implies that $ (\overline{Orb(x,f)},f)$ is mean equicontinuous.
\end{proof}

Now let us consider the third case.
We need the following characterization of mean equicontinuity obtained in \cite{DG16}.
Let $(X, f)$ be an extension of $(Y, g)$ by a factor map $\pi$.
We say that $\pi$ is an \textit{isomorphic extension} if $(X, f)$ is uniquely ergodic
and $\pi$ is a measure-theoretic isomorphism between $(X, \mu, f)$ and $(Y, \nu, g)$.
It is shown in \cite[Theorem~2.1]{DG16} that
a minimal uniquely ergodic system is mean equicontinuous if and only if
it is an isomorphic extension of its maximal equicontinuous factor.

\begin{lem}\label{lem:circ}
	Let $f\colon G\to G$ be a graph map and $x\in G$.
	Assume that $\omega_f(x)$ is infinite  and not a solenoid.
	Let $X$ be the minimal cycle of graphs containing $\omega_f(x)$ and $E=E(X,f)$.
	If $E$ is circumferential, then $(X,f)$ is mean equicontinuous.
	In addition, $(\overline{Orb(x,f)},f)$ is mean equicontinuous.
\end{lem}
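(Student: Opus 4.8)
The plan is to present $(X,f)$ as a sufficiently tame extension of an equicontinuous, uniquely ergodic system and to estimate mean distances by Birkhoff averaging on the base. Applying Lemma~\ref{lem:non-solenoid}, I fix the minimal cycle of graphs $X\in\mathcal{C}(x)$ with $\omega_f(x)\subset E\subset X$, where $E=E(X,f)$ is perfect and $f|_E$ is transitive, together with a transitive graph map $g\colon Y\to Y$ on a finite union of graphs $Y$ and a semi-conjugacy $\varphi\colon X\to Y$ between $f|_X$ and $g$ that almost conjugates $f|_E$ and $g$; in particular $\varphi(E)=Y$, every fibre $\varphi^{-1}(y)$ is connected, and $\varphi^{-1}(y)\cap E=\partial_X\varphi^{-1}(y)$ is a nonempty finite set.

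First I would show that $g$ has no periodic point. If $q\in Y$ had period $p$, then $\varphi(f^p(\varphi^{-1}(q)))=g^p(q)=q$ yields $f^p(\varphi^{-1}(q))\subset\varphi^{-1}(q)$, and since $f(E)\subset E$ the map $f^p$ carries the nonempty finite set $\varphi^{-1}(q)\cap E=\partial_X\varphi^{-1}(q)\subset X$ into itself, so $f$ would have a periodic point in $X$, contradicting that $E$ is circumferential. The finitely many connected components of $Y$ are then clopen, $g$ permutes them, and transitivity forces a single cyclic permutation of some length $m$; hence $g^m$ restricted to one component $Y_0$ is transitive without periodic points, so by Theorem~\ref{thm:basic-set-and-rotation}\,(2) it is conjugate to an irrational rotation of a circle. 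Therefore $Y$ is a disjoint union of $m$ circles cyclically permuted by $g$ with $g^m$ an irrational rotation on each, so $(Y,g)$ is equicontinuous, minimal and uniquely ergodic. Fix a compatible metric on $Y$ in which $g$ is an isometry, and let $\nu$ be its unique invariant Borel probability measure; $\nu$ is nonatomic because $g$ has no periodic points.

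The heart of the proof is an estimate on the fibres of $\varphi$. For $y\in Y$ and $\eta>0$ put $\psi(y)=\diam\varphi^{-1}(y)$ and $\psi_\eta(y)=\diam\varphi^{-1}(\overline{B_Y(y,\eta)})$. A fibre of diameter at least $\eps$ contains an arc of length at least $\eps$, distinct fibres are disjoint, and $X$ has finite total length, so $\{y:\psi(y)\geq\eps\}$ is finite for every $\eps>0$; hence $\{\psi>0\}$ is countable and $\int\psi\,d\nu=0$ (as $\nu$ is nonatomic). Since $\overline{B_Y(y,\eta)}\downarrow\{y\}$ we have $\psi_\eta\downarrow\psi$ pointwise, and each $\psi_\eta$ is upper semicontinuous, so $\int\psi_\eta\,d\nu\to0$ by monotone convergence. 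Given $\eps>0$, choose $\eta$ with $\int\psi_\eta\,d\nu<\eps/2$ and a continuous $\phi\colon Y\to[0,\infty)$ with $\phi\geq\psi_\eta$ and $\int\phi\,d\nu<\eps/2$ (possible since an upper semicontinuous function is a decreasing pointwise limit of continuous ones). By unique ergodicity, $\frac1N\sum_{k=0}^{N-1}\phi(g^ky)\to\int\phi\,d\nu$ uniformly in $y$, so there is $N_0$ with $\frac1N\sum_{k=0}^{N-1}\phi(g^ky)<\eps/2$ for all $N\geq N_0$ and all $y$. Finally choose $\delta>0$ so that $d(u,v)<\delta$ forces $d_Y(\varphi(u),\varphi(v))<\eta$. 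If $u,v\in X$ and $d(u,v)<\delta$, then since $g$ is an isometry $d_Y(\varphi(f^ku),\varphi(f^kv))=d_Y(\varphi(u),\varphi(v))<\eta$ for every $k$, hence $d(f^ku,f^kv)\leq\psi_\eta(g^k\varphi(u))\leq\phi(g^k\varphi(u))$, and therefore $\limsup_{N\to\infty}\frac1N\sum_{k=0}^{N-1}d(f^ku,f^kv)\leq\eps/2<\eps$. Thus $(X,f)$ is mean equicontinuous.

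For the added conclusion I would observe that the orbit of $x$ eventually enters $X$: since $\omega_f(x)$ is infinite and $\partial_GX$ is finite, $\omega_f(x)$ meets $\Int_GX$, and a subsequence of $(f^n(x))$ converges to a point of $\omega_f(x)\cap\Int_GX$, so $f^{n_0}(x)\in X$ for the first such $n_0$. Then $\overline{\Orb_f(x)}$ is the disjoint union of the compact set $\overline{\Orb_f(x)}\cap X\subset X$ and the finite set $F=\{x,f(x),\dots,f^{n_0-1}(x)\}$; shrinking $\delta$ below the modulus of mean equicontinuity of $(X,f)$ for $\eps$ and below the positive distance separating $F$ internally and from $\overline{\Orb_f(x)}\cap X$, every $\delta$-close pair either lies in $X$, where the previous paragraph applies, or coincides. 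Hence $(\overline{\Orb_f(x)},f)$ is mean equicontinuous. I expect the main obstacle to be the second paragraph — identifying $(Y,g)$ as an irrational rotation, in particular ruling out periodic points of $g$ and reducing to a single component — since once that structure is available the averaging estimate uses only the finite total length of $X$ and the unique ergodicity of $(Y,g)$, and the passage to $\overline{\Orb_f(x)}$ is routine.
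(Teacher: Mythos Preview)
Your argument is correct and takes a genuinely different route from the paper's.

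The paper first shows that $(E,f)$ is mean equicontinuous by invoking the Downarowicz--Glasner characterization \cite[Theorem~2.1]{DG16}: since the non-degenerate fibres of $\varphi$ are countably many and $\diam(f^i(\varphi^{-1}(y)))\to 0$ for each $y$, the map $\varphi\colon E\to Y$ is a measure-theoretic isomorphism, hence an isomorphic extension of the equicontinuous factor $(Y,g)$, and the cited theorem then gives mean equicontinuity of $(E,f)$. Mean equicontinuity is then extended from $E$ to all of $X$ pointwise, using that any $z\in X\setminus E$ lies in the interior of a fibre and is therefore asymptotic to a nearby boundary point in $E$. Your approach bypasses \cite{DG16} entirely: you work directly on $X$, dominate $d(f^ku,f^kv)$ by the fibre-tube diameter $\psi_\eta(g^k\varphi(u))$, and control the Birkhoff averages of $\psi_\eta$ (via a continuous majorant $\phi$) using only unique ergodicity of the rotation and the fact that $\int\psi\,d\nu=0$. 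This is more elementary and self-contained, while the paper's route has the virtue of identifying $(E,f)$ structurally as an isomorphic extension of a rotation, which is informative in its own right. Your handling of the passage to $\overline{\Orb_f(x)}$ and the identification of $(Y,g)$ as a finite union of irrationally rotated circles are essentially the same as in the paper (the paper simply passes to $f^k$ to assume $Y$ connected and then quotes Theorem~\ref{thm:basic-set-and-rotation}).
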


\begin{proof}
	By Lemma~\ref{lem:non-solenoid}
	there exists a transitive map $g\colon Y\to Y$,
	where $Y$ is a finite union of graphs, and a semi-conjugacy $\varphi\colon X\to Y$
	between $f|_X$ and $g$ which almost conjugates $f|_E$ and $g$.
	Without loss of generality, we assume that $Y$ is a graph instead of a finite union of graphs,
	using $f^k$ in place of $f$.
	By Theorem~\ref{thm:basic-set-and-rotation}, we may assume that $(Y,g)$ is a rotation of the unit circle.
For every $y\in Y$, if $\varphi^{-1}(y)$ is not a singleton, then $\varphi^{-1}(y)$
is a closed subgraph and therefore there are at most countably many such points $y$. 	Furthermore, sets $f^i(\varphi^{-1}(y))$ are pairwise disjoint closed subgraphs, so in particular
$\lim_{i\to\infty}\diam(f^i(\varphi^{-1}(y)))=0$.
Therefore, if we denote $D=\{y : |\varphi^{-1}(y)|>1\}$ then $\nu(D)=0$ for Haar measure of $(Y,g)$
and $\mu(\varphi^{-1}(D))=0$ for any invariant measure $(E,f)$.
This shows that $\varphi\colon E\to Y$ is a measure-theoretic isomorphism and so $(E,f)$ is uniquely ergodic.
Then $\varphi$ is an isomorphic extension, showing that $(X,f)$ is an isomorphic extension of its maximal equicontinuous factor. By \cite[Theorem~2.1]{DG16} we obtain that $(E,f)$ is mean equicontinuous.
	
Fix any $p\in X$ and $\eps>0$.
If $p\not\in E$, then there exists $y\in Y$ such that $p\in \Int(\varphi^{-1}(y))$ and then
	there also exists $\delta>0$ such that $B(p,\delta)\subset \Int(\varphi^{-1}(y))$.
But then for any $z\in X$ with $d(p,z)<\delta$ we have
	\[0=\limsup_{k\to\infty}d(f^k(p),f^k(z))=\limsup_{N\to\infty}\frac{1}{N}\sum_{k=0}^{N-1}d(f^k(p),f^k(z)).\]
	
Next assume that $p\in E$ and let $\delta>0$ be provided to $p$ and $\eps>0$ by mean equicontinuity of $f|_E$.
Fix any $z\in X$ with $d(p,z)<\delta$ and let $q\in \partial \varphi^{-1}(\varphi(z))\subset E$ with $d(p,q)\leq d(p,z)$.
Then
	\[\limsup_{N\to\infty}\frac{1}{N}\sum_{k=0}^{N-1}d(f^k(p),f^k(q))\leq \eps\]
and
	\[\limsup_{k\to\infty}d(f^k(z),f^k(q))=0,\]
which together give
	\[\limsup_{N\to\infty}\frac{1}{N}\sum_{k=0}^{N-1}d(f^k(p),f^k(z))\leq \eps.\]
This proves that $(X,f)$ is mean equicontinuous.
	
Since there exists $s\geq 0$ such that $f^s(x)\in X$, we easily see that $(\overline{Orb(x,f)},f)$ is mean equicontinuous. 
\end{proof}

\begin{proof}[Proof of Theorem~\ref{thm:main1}]
As we mentioned before, we only need  to prove the sufficiency, hence
to complete the proof, it is enough to combine Lemmas~\ref{lem:finite}, \ref{lem:solenoid2} and \ref{lem:circ}.
\end{proof}

\subsection{Sarnak's M\"obius disjointness conjecture for graph maps
with zero topological entropy}\label{sec:SMDC}

Recently, Fan and Jiang \cite{FJ15} introduced the notion of oscillating sequences as follows.
\begin{defn}
A sequence $(c_n)_{n\in\N}$ of complex numbers is called
an \emph{oscillating sequence} if for every fixed $t\in [0,1)$ it satisfies the oscillating condition
\[\lim_{N\to\infty}\frac{1}{N}\sum_{n=1}^N c_ne^{-2\pi in t}=0\]
and for some $\lambda>0$ and $M>0$ it satisfies the growth condition
\[\frac{1}{N}\sum_{n=1}^N |c_n|^\lambda \leq M\]
for all $N\geq 1$.
\end{defn}
There are many arithmetic functions, including the  M\"obius function
and the Liouville function, which are oscillating sequences (see \cite{DD82} and \cite{D94}).
We present the next definition after \cite{FJ15}.
\begin{defn}
We say that $(X,f)$ is \emph{minimally mean Lyapunov stable} if
every minimal subsystem of $(X,f)$ is mean equicontinuous.

We say that $(X,f)$ is \emph{minimally mean attractable}
if for any $x\in X$  there is a minimal subsystem $(K_x,f)$ such that
for any $\eps>0$ there exists $y\in K_x$ with
\[\limsup_{n\to\infty}\frac{1}{n}\sum_{k=0}^{n-1}d(f^k(x),f^k(y))<\eps.\]
\end{defn}

The following fact presents a simple relation between the introduced properties.

\begin{prop} \label{prop:LME-implies-MMA-MMLS}
If a dynamical system $(X,f)$ is locally mean equicontinuous,
then it is minimally mean attractable and minimally mean Lyapunov stable.
\end{prop}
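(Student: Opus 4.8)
The statement splits into two implications: locally mean equicontinuous $\Rightarrow$ minimally mean Lyapunov stable, and locally mean equicontinuous $\Rightarrow$ minimally mean attractable. The first is essentially immediate from definitions: if $(M,f)$ is a minimal subsystem of $(X,f)$, pick any $x\in M$; by minimality $\overline{\Orb_f(x)}=M$, and local mean equicontinuity says $(\overline{\Orb_f(x)},f)=(M,f)$ is mean equicontinuous. So minimal mean Lyapunov stability follows with no real work. The whole content of the proposition lies in the second implication.

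**Minimal mean attractability.** Fix $x\in X$ and set $K=\overline{\Orb_f(x)}$; by hypothesis $(K,f)$ is mean equicontinuous. I would first choose a minimal subsystem $(K_x,f)\subset K$ (which exists by Zorn's lemma, as noted in the Preliminaries). The goal is: for every $\eps>0$ there is $y\in K_x$ with $\limsup_{n\to\infty}\frac1n\sum_{k=0}^{n-1}d(f^k(x),f^k(y))<\eps$. The natural idea is to use the mean equicontinuity of $(K,f)$ to transfer closeness-in-position (near $K_x$ at some time) into closeness-in-Cesàro-average for all time. Concretely, given $\eps>0$, let $\delta>0$ be the mean-equicontinuity modulus of $(K,f)$ for $\eps$. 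Since $(K_x,f)$ is minimal, it is in particular contained in the non-wandering set, and more to the point every point of $K_x$ is recurrent; I want to produce a time $N$ with $f^N(x)$ within $\delta$ of some point $z\in K_x$. This is where one must be slightly careful: $x$ need not have its $\omega$-limit set inside $K_x$ in general, but $\omega_f(x)\subset K$ always contains a minimal set, and we are free to have chosen $K_x$ to be a minimal subset of $\omega_f(x)$. Then $\overline{\Orb_f(x)}\supset\omega_f(x)\supset K_x$, and there is a subsequence $f^{n_i}(x)\to w$ for some $w\in K_x$; in particular $d(f^{N}(x),w)<\delta$ for some $N$. Applying mean equicontinuity to the pair $f^N(x),w\in K$,
\[
\limsup_{n\to\infty}\frac1n\sum_{k=0}^{n-1}d\bigl(f^k(f^N(x)),f^k(w)\bigr)<\eps .
\]
Since deleting or prepending the finitely many initial terms $k=0,\dots,N-1$ does not change a Cesàro $\limsup$, this gives $\limsup_{n\to\infty}\frac1n\sum_{k=0}^{n-1}d(f^k(x),f^k(w))<\eps$, and $w\in K_x$ is the required point $y$.

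**Main obstacle.** The one genuine subtlety is the interaction between the choice of the minimal subsystem $K_x$ and the attracting property: the definition of minimal mean attractability asks for a \emph{single} minimal set $K_x$ that works for all $\eps$ simultaneously. The argument above uses the same $K_x$ for every $\eps$ (namely a fixed minimal subset of $\omega_f(x)$), so this is fine, but one should state it in that order — first fix $K_x\subset\omega_f(x)$ minimal, then run the $\eps$-argument — rather than choosing $K_x$ after $\eps$. A second minor point is to make sure the shift-by-$N$ manipulation of the Cesàro average is justified; this is the elementary fact that for a bounded sequence $(a_k)$ one has $\limsup_n\frac1n\sum_{k=0}^{n-1}a_{k+N}=\limsup_n\frac1n\sum_{k=0}^{n-1}a_k$, which I would invoke without detailed computation. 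No graph structure is used anywhere, so the proposition holds for arbitrary topological dynamical systems; I would remark on that.
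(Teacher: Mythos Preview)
Your overall strategy matches the paper's: the MMLS part is identical, and for MMA you fix a minimal set $K_x\subset\omega_f(x)$, find a time $N$ with $f^N(x)$ $\delta$-close to some $w\in K_x$, and invoke mean equicontinuity of $(\overline{\Orb_f(x)},f)$. However, your final ``shift'' step is incorrect as written. From mean equicontinuity you obtain
\[
\limsup_{n\to\infty}\frac1n\sum_{k=0}^{n-1}d\bigl(f^{k+N}(x),f^k(w)\bigr)<\eps,
\]
and you claim this equals $\limsup_{n}\frac1n\sum_{k=0}^{n-1}d(f^k(x),f^k(w))$ by deleting initial terms. But these two averages are \emph{not} shifts of a single sequence: the first compares $f^{k+N}(x)$ with $f^k(w)$, the second compares $f^k(x)$ with $f^k(w)$. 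The elementary fact you quote, $\limsup_n\frac1n\sum a_{k+N}=\limsup_n\frac1n\sum a_k$, does not bridge them, because setting $a_k=d(f^k(x),f^k(w))$ gives $a_{k+N}=d(f^{k+N}(x),f^{k+N}(w))$, not $d(f^{k+N}(x),f^k(w))$. So taking $y=w$ does not work.

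The fix, which is exactly what the paper does, is to use that $f|_{K_x}$ is surjective (as on any minimal set) to choose $y\in K_x$ with $f^N(y)=w$. Then $d(f^N(x),f^N(y))=d(f^N(x),w)<\delta$, mean equicontinuity applied to the pair $(f^N(x),f^N(y))$ gives
\[
\limsup_{n\to\infty}\frac1n\sum_{k=0}^{n-1}d\bigl(f^{k+N}(x),f^{k+N}(y)\bigr)<\eps,
\]
and \emph{now} this is genuinely a shift of $\frac1n\sum_{k}d(f^k(x),f^k(y))$, so your Ces\`aro-shift lemma applies and yields the desired bound with $y\in K_x$. Your remark that $K_x$ must be fixed before $\eps$ is correct and well observed.
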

\begin{proof}
Let $(K,f)$ be any minimal subsystem of $(X,f)$ and fix any
$x_0\in K$. Then $K=\overline{\Orb_f(x_0)}$ and $(K,f)$ is mean equicontinuous by the definition of
local mean equicontinuity. So $(X,f)$ is minimally mean Lyapunov stable.

Next, fix any $x\in X$ and $\eps>0$. As $(\overline{\Orb_f(x)},f)$ is mean equicontinuous,
there exists $\delta>0$ such that for any $u,v\in \overline{\Orb_f(x)}$,
$d(u,v)<\delta$ implies that
\[\limsup_{n\to\infty}\frac{1}{n}\sum_{k=0}^{n-1}d(f^k(u),f^k(v))<\eps.\]
Fix any minimal point $y_0\in \overline{\Orb_f(x)}$ and let
$m\in\N_0$ such that $d(f^m(x),y_0)<\delta$.
Obviously restriction of $f$ to $\overline{\Orb_f(y_0)}$ is surjective, hence
there exists a minimal point $y\in\overline{\Orb_f(y_0)}$ such that $f^m(y)=y_0$.
Thus
\[\limsup_{n\to\infty}\frac{{1}}{n}\sum_{k=0}^{n-1}d(f^k(x),f^k(y))=
\limsup_{n\to\infty}\frac{1}{n}\sum_{k=0}^{n-1}d(f^k(f^m(x)),f^k(y_0))<\eps. \]
This implies that $(X,f)$ is minimally mean attractable.
\end{proof}

In \cite{FJ15}, Fan and Jiang proved that any oscillating sequence
is linearly disjoint from all dynamical systems which are minimally mean attractable and
minimal mean Lyapunov stable.
They also provided several examples of minimally mean attractable and minimally mean Lyapunov stable
systems, including all $p$-adic polynomials, all
$p$-adic rational maps with good reduction, all automorphisms of
$2$-torus with zero topological entropy, all diagonalized affine maps
of $2$-torus with zero topological entropy, all Feigenbaum zero topological
entropy flows, and all orientation-preserving circle homeomorphisms.
Motivated by these results, Fan and Jiang conjecture in
\cite[Remark 8]{FJ15} the following.
\begin{conj}\label{conj:FJ}
All continuous interval maps with
zero topological entropy are minimally mean attractable and minimally mean Lyapunov stable.
\end{conj}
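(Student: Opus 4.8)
The plan is to obtain Conjecture~\ref{conj:FJ} as an immediate corollary of Theorem~\ref{thm:main1} and Proposition~\ref{prop:LME-implies-MMA-MMLS}, the point being that a compact interval is a special case of a topological graph. First I would note that the closed interval $[0,1]$ is an arc and hence, according to the definitions in Section~2, a topological graph (a non-degenerate compact connected metric space that is a union of finitely many arcs --- here a single arc); the same applies to any compact interval. Therefore every continuous interval map $f\colon[0,1]\to[0,1]$ is a graph map in the sense of this paper.

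Now assume $f$ has zero topological entropy. Applying Theorem~\ref{thm:main1} with $G=[0,1]$ gives that $([0,1],f)$ is locally mean equicontinuous, that is, $(\overline{\Orb_f(x)},f)$ is mean equicontinuous for every $x\in[0,1]$. Feeding this into Proposition~\ref{prop:LME-implies-MMA-MMLS} yields that $([0,1],f)$ is both minimally mean attractable and minimally mean Lyapunov stable, which is exactly the statement of the conjecture.

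In this derivation there is no real obstacle left: all the difficulty has already been absorbed into Theorem~\ref{thm:main1}, whose proof splits, according to the structure of $\omega$-limit sets, into the three cases handled in Lemmas~\ref{lem:finite} ($\omega_f(x)$ finite), \ref{lem:solenoid2} ($\omega_f(x)$ a solenoid) and \ref{lem:circ} ($\omega_f(x)$ infinite and not a solenoid, i.e.\ the circumferential/basic-set case, where Theorem~\ref{thm:basic-set-and-rotation} forces the relevant factor to be an irrational rotation and rules out basic sets by zero entropy). For completeness I would also record the converse direction: by Corollary~\ref{cor:positive-entropy-subsystem} a locally mean equicontinuous interval map must have zero topological entropy, so the continuous interval maps that are minimally mean attractable and minimally mean Lyapunov stable are precisely those of zero entropy. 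This converse is not needed for the conjecture but rounds out the picture.
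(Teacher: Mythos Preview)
Your proposal is correct and follows exactly the paper's own argument: the paper deduces Conjecture~\ref{conj:FJ} (indeed, its generalization to all graph maps) by combining Theorem~\ref{thm:main1} with Proposition~\ref{prop:LME-implies-MMA-MMLS}, just as you do. One small caveat on your optional final remark: from Corollary~\ref{cor:positive-entropy-subsystem} you get that local mean equicontinuity implies zero entropy, but this does not immediately yield that MMA${}+{}$MMLS implies zero entropy (since LME is a priori stronger than MMA${}+{}$MMLS); as you note, this converse is not needed for the conjecture.
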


Theorem~\ref{thm:main2} provides a positive answer for that
conjecture\footnote{After this paper was accepted, the authors noticed
that Jiang also prove Conjecture~\ref{conj:FJ} independently in \cite{J17}.}.

\begin{proof}[Proof of Theorem~\ref{thm:main2}]
Combining Theorem~\ref{thm:main1} and Proposition~\ref{prop:LME-implies-MMA-MMLS}
we obtain that the Conjecture~\ref{conj:FJ} holds for all graph maps with zero topological entropy, which automatically proves Theorem~\ref{thm:main2}.
\end{proof}

As the last result of this section, we provide an example showing that
there exists a topological dynamical system $(X,f)$ which is
minimally mean attractable and minimally mean Lyapunov stable but not locally mean equicontinuous.
\begin{exam}
Let $I=\{0\}\times [0,1]\subset\mathbb{R}^2$ and
let $x_n=(1/n,a_n)$, for $n\geq 1$ where $a_n\in [0,1]$ is some sequence such that $|a_n-a_{n+1}|<1/n$.
Let $X=I\cup \{x_n\colon n\geq 1\}$ and define a map $f\colon X\to X$
by $f(x)=x$ for $x\in I$ and $f(x_n)=x_{n+1}$ for all $n\geq 1$.
It is clear that $X$ is compact and $f$ is continuous. Then we obtain a dynamical system $(X,f)$.
It is easy to see that $(X,f)$ is minimally mean Lyapunov stable since the only minimal
subsets are singletons on $I$.

Now let us consider a particular example of $(X,f)$, by putting
$$
a_n=\begin{cases}
0, &n<2^{10} \textrm{ or } n\in [2^k,2^{k+1}-2k) \textrm{ for some }k\geq 10,\\
i/k, & n=2^{k+1}-2k+i \textrm{ for some }k\text{ and } 0\leq i<k,\\
1-i/k,& n=2^{k+1}-k+i \textrm{ for some }k\text{ and } 0\leq i<k.
\end{cases}
$$
Clearly for any $j\in [2^k,2^{k+1})$ we have $|\{ n<j: a_n\neq 0  \}|\leq \sum_{i=1}^{k+1}2i\leq 2(k+1)^2$
hence for every $x\in \{x_n\colon n\geq 1\}$ we have
$$
\limsup_{n\to\infty}\frac{1}{n}\sum_{i=0}^{n-1}d(f^i(x),f^i(0,0))
\leq \lim_{k\to\infty}\frac{2(k+1)^2}{2^k}=0
$$
which shows that $(X,f)$ is minimally mean attractable.
But it is not locally mean equicontinuous, because $I\subset \omega_f(x_1)$.
\end{exam}

\section{Li-Yorke chaos and proximality for graph maps with
zero topological entropy}\label{sec:scrambled:entropy}
The definition of scrambled set as a determinant of chaos originated
from the paper of Li and Yorke \cite{LY}.
During last 40 years it generated a lot of attention and motivated many new
results (see \cite{BHS08} and \cite{LY16} for presentation of recent advances on that field).
The following definition is a more modern statement of ideas from \cite{LY}.
\begin{defn}
Let $(X,f)$ be a dynamical system.
A pair $(x,y)\in X^2$ is called: \emph{proximal} if $\liminf_{n\to\infty}d(f^n(x),f^n(y))=0$;
\emph{asymptotic} if $\lim_{n\to\infty}d(f^n(x),f^n(y))=0$;
\emph{scrambled} or \emph{Li-Yorke} if it is proximal but not asymptotic.

We say that a dynamical system $(X,f)$ is \textit{chaotic in the sense of Li and Yorke}
if there exists an uncountable subset $S$ of $X$ such that every two distinct points in $S$
form a scrambled pair.
\end{defn}

In \cite{KS89}, Kuchta and Simital proved that an interval map has a scrambled pair
if and only if it is chaotic in the sense of Li and Yorke.
This result was later generalized to graph maps in \cite{RS14}.
The following lemma is implicitly contained in the proof of \cite[Theorem~3]{RS14}.
We present a complete proof for the reader convenience.

\begin{lem}\label{lem:scrambled-soleniod}
Let $f\colon G\to G$ be a graph map with zero entropy.
If $(x,y)$ is a scrambled pair,
then both $\omega_f(x)$ and $\omega_f(y)$ are solenoids.
\end{lem}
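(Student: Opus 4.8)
The plan is to argue by contrapositive: assuming $\omega_f(x)$ (say) is \emph{not} a solenoid, I will show that $(x,y)$ cannot be a scrambled pair. Since $f$ has zero topological entropy, by Theorem~\ref{thm:basic-set-and-rotation}(1) no basic set can occur, so if $\omega_f(x)$ is infinite and not a solenoid it must be a circumferential set. The key structural input is then Lemma~\ref{lem:non-solenoid}: there is a minimal cycle of graphs $X \in \mathcal{C}(x)$ with $\omega_f(x) \subset E = E(X,f)$, and $f|_E$ is semi-conjugate (via an almost conjugacy) to a transitive graph map $g$ on $Y$, which by Theorem~\ref{thm:basic-set-and-rotation}(2) (after replacing $f$ by a suitable power $f^k$ to make $Y$ a single circle) is an irrational rotation. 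In particular Lemma~\ref{lem:circ} already tells us $(X,f)$ is mean equicontinuous, hence has zero topological entropy, and $(E,f)$ is uniquely ergodic with the rotation as its maximal equicontinuous factor.

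The next step handles the finite case and reduces to the interesting one. If $\omega_f(x)$ is finite then it is a periodic orbit (cf.\ Lemma~\ref{lem:finite}), so $f^n(x)$ converges to a periodic point; if $(x,y)$ is proximal then $\omega_f(y)$ meets that periodic orbit and, being forward-invariant and (one checks) equal to it, forces $f^n(y)$ to converge to the same periodic point, so the pair is asymptotic, contradicting scrambledness. Symmetrically, if $\omega_f(y)$ is finite we are done. So we may assume both $\omega$-limit sets are infinite, and it remains to rule out the circumferential case. Since $(x,y)$ is proximal, there is a sequence $n_j \to \infty$ with $d(f^{n_j}(x), f^{n_j}(y)) \to 0$; as $\omega_f(x) \subset X$ there is $s \ge 0$ with $f^s(x) \in X$, and because $X$ is a cycle of graphs (forward invariant under $f^k$) we get $f^{s + mk}(x) \in X$ for all $m$, and then $f^{n}(y) \in X$ for all large $n$ along the proximal subsequence — more care is needed here, but the point is that proximality pushes $\omega_f(y)$ into $X$ as well.

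The heart of the argument, and the step I expect to be the main obstacle, is showing that once $\omega_f(x)$ and $\omega_f(y)$ both sit inside the circumferential piece $(X,f)$, proximality already implies asymptoticity. The mechanism is: the factor map $\varphi\colon X \to Y$ onto the irrational rotation has the property that distinct points of $E$ have distinct $\varphi$-images on a full measure set, fibers $\varphi^{-1}(z)$ are subgraphs whose forward iterates shrink ($\diam f^i(\varphi^{-1}(z)) \to 0$), and the rotation on $Y$ is distal. So if $\varphi(x')$ and $\varphi(y')$ (for appropriate iterates $x' = f^a(x), y' = f^b(y)$ landing in $X$) are \emph{equal}, then $f^n(x')$ and $f^n(y')$ stay in the same shrinking fiber and the pair is asymptotic; if they are \emph{distinct}, then by distality of the rotation $\liminf_n d_Y(g^n\varphi(x'), g^n\varphi(y')) > 0$, and since $\varphi$ is uniformly continuous this contradicts proximality of $(x,y)$. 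The technical friction is in matching up the times (the power $f^k$ versus $f$, and the offsets $s$) and in verifying that proximality of the original pair transfers to a proximal pair of points genuinely inside $X$ and in the domain where the fiber/distality dichotomy applies; I would isolate this as a short preliminary claim. Once that dichotomy is in place, the circumferential case is excluded, completing the contrapositive and hence the proof.
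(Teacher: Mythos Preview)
Your overall strategy matches the paper's proof: exclude the finite case, then in the circumferential case use the almost conjugacy $\varphi$ to an irrational rotation and argue the dichotomy ``$\varphi(x)=\varphi(y)$ forces asymptoticity via shrinking fibers'' versus ``$\varphi(x)\neq\varphi(y)$ contradicts proximality via distality of the rotation.'' The paper dispatches the bookkeeping you flag (passing to a power, getting both points into the same component) simply by noting that proximality together with disjointness of the cycle components forces $x,y$ into the same $K_i$, and then replacing $f$ by $f^n|_{K_i}$; no delicate time-matching is needed, and the semi-conjugacy $\varphi$ is defined on all of $K$, so your worry about ``appropriate iterates landing in $X$'' does not arise.

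There is, however, a real gap in your treatment of the finite case. You assert that if $\omega_f(x)$ is a periodic orbit and $(x,y)$ is proximal then $\omega_f(y)$, ``being forward-invariant and (one checks) equal to it,'' coincides with that periodic orbit. Forward invariance does not yield this: an infinite $\omega$-limit set can properly contain a periodic orbit. What actually rules this out is the basic-set mechanism you invoked at the outset but failed to apply here: if $\omega_f(y)$ were infinite and contained a periodic point, it would not be a solenoid (Lemma~\ref{lem:solenoid}(3)), and the minimal cycle of graphs containing it would carry a periodic point, so the associated $E(X,f)$ would be a basic set, forcing positive entropy by Theorem~\ref{thm:basic-set-and-rotation}. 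This is exactly how the paper shows that one $\omega$-limit set cannot be finite while the other is infinite; once you insert this step, your argument is complete and coincides with the paper's.
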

\begin{proof}
It is clear that $\omega_f(x)$ and $\omega_f(y)$ can not be both finite.
First, we claim that if $\omega_f(x)$ is infinite then $\omega_f(y)$ must be infinite as well.
If $\omega_f(y)$ is finite, it must be a periodic orbit.
As $(x,y)$ is a proximal pair, $\omega_f(x)\cap \omega_f(y)\neq\emptyset$.
This implies that $\omega_f(x)$ properly contains a periodic point,
hence $\omega_f(x)$ is a basic set (it is not solenoid by Lemma~\ref{lem:solenoid}).
Then entropy of $f$ is positive by Theorem~\ref{thm:basic-set-and-rotation} which is a contradiction.
The claim is proved.

Furthermore, by Theorem~\ref{thm:basic-set-and-rotation}, to show that $\omega_f(x)$ is a solenoid
we only need to  show that  $\omega_f(x)$ cannot be circumferential.
Assume on the contrary that $\omega_f(x)$ is circumferential and
let $K_1, \ldots, K_n$ be the minimal cycle of graphs containing $\omega_f(x)$ without periodic points.
Then putting $K=K_i$ for well chosen $i$,
we obtain that $f^n(K)\subset K$ and $\omega_{f^n}(x)\subset K$,
because $\omega_f(x)=\bigcup_{i=0}^{n-1}\omega_{f^n}(f^i(x))$.
Since $x,y$ are proximal, $K$ is a graph and $K_i\cap K_j=\emptyset$ for $i\neq j$, without loss of generality we may assume that $x,y\in K$.
Denote $g=f^n|_K$. Observe that there does not exist cycle of graphs refining $K$ and
there is also no periodic point of $g$ in $K$ (that is, $K$ is circumferential for $g$).
By Lemma~\ref{lem:non-solenoid}, if we put $E=E(X,g)$ then
$\omega_g(x)\subset E\subset K$, $E$ is a perfect set,
$g|_E$ is transitive and there exists a transitive map $h\colon Y\to Y$,
where $Y$ is graph, and a semi-conjugacy $\varphi\colon X\to Y$
between $g|_K$ and $h$ which almost conjugates $g|_E$ and $h$.
By the definition, every minimal set of $g$ in $K$ is infinite and hence also every minimal set of $h$
 is infinite (by definition $E\cap \varphi^{-1}(z)$ is finite for every $z\in Y$).
This shows that $h$ is a transitive graph map without periodic points,
which by result of Blokh \cite{Blokh} implies that $Y$ is a circle and
$h$ is (conjugated to) an irrational rotation.
Since proximal pairs are preserved by semi-conjugacy and $h$ does not have proximal pairs,
there exists $z$ such that $x,y\in \varphi^{-1}(z)$. But sets $\varphi^{-1}(h^i(z)), \varphi^{-1}(h^j(z))$
are pairwise disjoint subgraphs for $i \neq j$ and so $\lim_{j\to\infty}\diam \varphi^{-1}(h^j(z))=0$.
This is a contradiction, since $(x,y)$ is not an asymptotic pair. Indeed $\omega_f(x)$
cannot be circumferential.

To finish the proof note that $\omega_f(y)$ is also a solenoid by symmetry of the argument.
\end{proof}

\begin{defn}
We say that a dynamical system $(X,f)$ is \emph{Devaney chaotic} if $X$ is infinite,
the set of periodic points is dense in $X$ and $(X,f)$ is transitive.
\end{defn}
It is proved in \cite{L93},
that  an interval map has positive entropy if and only if
it has a Devaney chaotic subsystem.
By recent advances, this result can easily be generalized to graph maps, as presented below. It will be used as a mid-step in
the proof of Theorem~\ref{thm:scrambled3tuple}.

\begin{prop}\label{prop:Devaney-chaos}
Let $f\colon G\to G$ be a graph map. Then $(G,f)$ has positive entropy if and only if
it has a Devaney chaotic subsystem.
\end{prop}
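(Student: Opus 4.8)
The plan is to prove the two implications separately. The implication ``Devaney chaotic subsystem $\Rightarrow$ positive entropy'' will follow quickly from the structure of $\omega$-limit sets recalled above; the reverse implication is the substantial one and will rely on Blokh's spectral decomposition for graph maps \cite{Blokh}.

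First I would dispose of the easy direction. Assume $(Y,f|_{Y})$ is a Devaney chaotic subsystem of $(G,f)$, so $Y$ is infinite, $f|_{Y}$ is transitive, and its periodic points are dense in $Y$. Transitivity gives a point $x\in Y$ with $\omega_f(x)=\omega_{f|_{Y}}(x)=Y$, which is therefore an infinite $\omega$-limit set; fix also a periodic point $p\in Y\subset\omega_f(x)$ of $f$. By Lemma~\ref{lem:solenoid} a solenoid contains no periodic point, so $\omega_f(x)$ is not a solenoid, and Lemma~\ref{lem:non-solenoid} applies: if $X$ is the minimal cycle of graphs containing $\omega_f(x)$ and $E=E(X,f)$, then $p\in\omega_f(x)\subset X$, so $X$ contains a periodic point, hence $E$ is a basic set, and Theorem~\ref{thm:basic-set-and-rotation}\,(1) gives $h(f)>0$.

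For the reverse implication, assume $h(f)>0$. Here I would invoke Blokh's spectral decomposition for graph maps, which together with Theorem~\ref{thm:basic-set-and-rotation}\,(1) yields that $f$ has positive topological entropy if and only if it admits a basic set $E=E(X,f)$; moreover, in Blokh's description such a basic set is perfect (as already recorded in Lemma~\ref{lem:non-solenoid}), $f|_{E}$ is topologically transitive, and the periodic points of $f$ are dense in $E$. Hence $(E,f|_{E})$ is infinite, transitive and has dense periodic points, i.e.\ it is a Devaney chaotic subsystem. An equivalent route, parallel to the interval-map proof in \cite{L93}, is to use the classical horseshoe characterisation of positive entropy for graph maps: some iterate $f^{n}$ carries a horseshoe, and the usual itinerary construction --- after separating the branches so that they are genuinely disjoint, replacing $n$ by a multiple if necessary --- produces an $f^{n}$-invariant Cantor set $\Lambda$ on which $f^{n}$ is conjugate to a transitive subshift with dense periodic points, whence $(\Lambda,f^{n}|_{\Lambda})$ is Devaney chaotic; then $\Lambda'=\bigcup_{i=0}^{n-1}f^{i}(\Lambda)$ is $f$-invariant, and $(\Lambda',f|_{\Lambda'})$ is again Devaney chaotic, because $\omega_f(x)=\bigcup_{i=0}^{n-1}f^{i}(\Lambda)=\Lambda'$ for a transitive point $x$ of $f^{n}|_{\Lambda}$, and continuous images of the dense periodic set under $f^{0},\dots,f^{n-1}$ stay dense.

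I expect the main obstacle to be the reverse implication, more precisely the external input that positive entropy forces the existence of a basic set (equivalently, a horseshoe in some iterate). Once that is granted, perfectness and transitivity of $E$ are already available in the excerpt, and only the density of periodic points in $E$ --- or, along the horseshoe route, the separation of branches and the passage from $f^{n}$ back to $f$ --- needs to be verified. The forward implication, by contrast, is essentially a three-line consequence of Lemmas~\ref{lem:solenoid} and~\ref{lem:non-solenoid} and Theorem~\ref{thm:basic-set-and-rotation}.
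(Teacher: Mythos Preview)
Your proposal is correct. The forward direction matches the paper's argument exactly, and your second route for the reverse direction (via horseshoes) is precisely what the paper does: it cites Llibre--Misiurewicz \cite{LM} for the existence of a strong $2$-horseshoe for some iterate $f^{n}$, then follows \cite{L93} to obtain an $f^{n}$-invariant set $D$ \emph{almost} conjugate to the full $2$-shift, extracts a Devaney chaotic subset $E\subset D$, and finally passes to $F=\bigcup_{i=0}^{n-1}f^{i}(E)$. One small refinement: the paper (and \cite{L93}) works with an almost conjugacy to the shift and then extracts a genuine Devaney chaotic subset, rather than asserting a conjugacy outright; your phrase ``conjugate to a transitive subshift'' slightly overstates what the raw itinerary map delivers without that extra step. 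Your first route, via Blokh's spectral decomposition and the density of periodic points in basic sets, is a legitimate alternative the paper does not pursue; as you note, it requires importing more from \cite{Blokh} than the paper's preliminaries record.
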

\begin{proof}
If $(G,f)$ has a Devaney chaotic subsystem $(Y,f)$,
then $Y$ is an infinite $\omega$-limit set containing a periodic point,
hence $Y$ is a basic set.
By Theorem~\ref{thm:basic-set-and-rotation}, $f$ has positive entropy.

Now assume that $f$ has positive entropy.
By Theorem B and Lemma 3.3 in \cite{LM}, there exists $n\in\mathbb{N}$ such that
$f^n$ has a strong $2$-horseshoe, that is there exists an closed interval $I\subset G$ and
two disjoint closed intervals $J_1$ and $J_2$ such that $J_1\cup J_2\subset \Int(I)$
and $f^n(J_1)=f^n (J_2)=I$.
Repeating the argument from \cite[Proposition 4.7]{L93} (or \cite[Proposition 5.15]{R15}) we
find a closed $f^n$-invariant set $D\subset I$ such that $f^n|_D$ is almost
conjugate to the full shift on $2$-symbols.
By the proof of \cite[Proposition 4.9]{L93}, there exists a closed subset $E$ of $D$
such that $f^n|_E$ is Devaney chaotic.
Taking $F=\bigcup_{i=0}^{n-1}f^i(E)$ it is easy to see that $f|_F$ is Devaney chaotic.
\end{proof}

The following definition was introduced in \cite{X05} as a more sensitive tool for the description of
complexity degrees in dynamical systems which are chaotic in the sense of Li and Yorke.

\begin{defn}
Let $(X,f)$ be a topological dynamical system and $n\geq 2$.
A tuple $(x_1,\dotsc,x_n)\in X^n$ is called \emph{$n$-scrambled} if
\[
    \liminf_{k\to\infty}\max_{1\leq i<j\leq n} d\bigl(f^k(x_i),f^k(x_j)\bigr)=0,\quad
    \limsup_{k\to\infty}\min_{1\leq i<j\leq n}d\bigl(f^k(x_i),f^k(x_j)\bigr)>0.
\]
The dynamical system $(X,f)$ is called \emph{$n$-chaotic in the sense of Li-Yorke} if
there exists an uncountable subset $S$ of $X$ such that every $n$ pairwise distinct points in $S$
form an $n$-scrambled tuple.
\end{defn}

\begin{proof}[Proof of Theorem~\ref{thm:scrambled3tuple}]
If $f$ has positive entropy, by Proposition~\ref{prop:Devaney-chaos}
there exists a Devaney chaotic subsystem.
By the main result of \cite{X05}, Devaney chaos implies
$n$-chaos in the sense of Li-Yorke for all $n\geq 2$.
In particular, there is a scrambled $3$-tuple in $G$.

It remains to show that if $f$ has zero topological entropy,
then it has no $3$-scrambled tuples.
Assume that there exists a $3$-scrambled tuple $(x_1,x_2,x_3)$.
By Lemma~\ref{lem:scrambled-soleniod}, $\omega_f(x_1)$ is a solenoid and then
by Lemma~\ref{lem:solenoid} there exists a sequence of
cycles of graphs $X_n$ with strictly increasing periods such that
$\omega_f(x)\subset \bigcap_{n\geq 1} X_n$.

Since there are finitely many branching points in $G$,
replacing $f$ by $f^n$, we can find an index $i$ such that $I=X_i\subset G$ is an interval and $k\geq 0$ such that
we can view $\omega_{f^n}(f^k(x_1))$ as an invariant  subset of the interval map $f^n|_I$.
As $(x_1,x_2,x_3)$ is scrambled tuple and $\omega_{f^n}(f^k(x_1))\subset \omega_f(x_1)\cap I$ is an infinite minimal set,
there exists $j\geq 0$ such that $f^j(x_i)\in I$ for $i=1,2,3$.
Therefore $(f^j(x_1),f^j(x_2),f^j(x_3))$ is a scrambled tuple for the interval map $f^n|_I$,
which is impossible because $f^n|_I$ has zero topological entropy (see \cite{L11}).
\end{proof}

The proximal relation of a dynamical system $(X,f)$ is denoted by
\[
    \mathrm{Prox}(f)=\{(x,y)\in X^2\colon (x,y)\textrm{ is proximal}\}.
\]
The reflexivity and symmetry of $\mathrm{Prox}(f)$ is obvious.

A set $F\subset \N$ has \textit{Banach density one} if for every $\lambda<1$
there exists $N\geq 1$ such that $|F\cap A|\geq \lambda |A|$
for every block of consecutive integers $A\subset \N$ with  $|A|\geq N$.
We say that a pair $(x,y)\in X^2$ is \textit{Banach proximal} if for every
$\eps>0$, the set $\{n : d(f^n(x),f^n(y))<\eps\}$ has Banach density one.
It is clear that every asymptotic pair is Banach proximal and
every Banach proximal pair is proximal.

\begin{proof}[Proof of Theorem~\ref{thm:proximal}]

\eqref{thm:proximal:1} $\Longrightarrow$ \eqref{thm:proximal:2}:
Let $(x,y)$ be a proximal pair.
If $(x,y)$ is asymptotic, then it is clearly also Banach proximal, so we may assume that $(x,y)$ is not asymptotic.
Then $(x,y)$ is a scrambled pair and
by Lemma~\ref{lem:scrambled-soleniod}, $\omega_f(x)$ is a solenoid.
Proceeding as in the proof of Theorem~\ref{thm:scrambled3tuple},
we find integers $n,k>0$ and an interval $I\subset G$
such that $\omega_{f^n}(f^k(x))\subset I$ and $f^n(I)\subset I$.
Note that $(x,y)$ is proximal, hence
there exists $j\geq 0$ such that $f^j(x),f^j(y)\in I$.
So $(f^j(x),f^j(y))$ is a scrambled pair for the interval map $f^n|_I$,
By \cite[Proposition 4.3]{L11} we obtain that
$(f^j(x),f^j(y))$ is a Banach proximal pair for $f^n|_I$ and then obviously
$(x,y)$ is a Banach proximal pair for $f$.

\eqref{thm:proximal:2} $\Longrightarrow$ \eqref{thm:proximal:3}:
The intersection of two Banach density one sets also has Banach density one.
As every proximal pair is Banach proximal, it is easy to see that
 $\mathrm{Prox}(f)$ is an equivalence relation.

\eqref{thm:proximal:3} $\Longrightarrow$ \eqref{thm:proximal:1}:
If $f$ has positive topological entropy, by Proposition~\ref{prop:Devaney-chaos}
there exists a Devaney chaotic subsystem.
By \cite[Lemma~5.4]{L11}, $\mathrm{Prox}(f)$ is not an equivalence relation.
\end{proof}

\section{Null systems and chaos in the sense of Li and Yorke} \label{sec:null}
Let $(X,f)$ be a topological dynamical system.
For an increasing sequence $A$ of positive integers,
the sequence entropy of $(X,f)$ along the sequence $A$ is denoted by $h^A(T)$.
Let $h^*(T)=\sup_A h^A(X,f)$, where $A$ ranges over all increasing sequences of positive integers.
We say that $(X,f)$ is \emph{null} if $h^*(T)=0$.
In~\cite{FS91}, Franzova and Sm\'\i tal showed that an interval map
has no scrambled pairs if and only if it is null.

The enveloping semigroup $\mathcal{E}(X,f)$ of a dynamical system $(X,f)$
is defined as the closure of $\{f,f^2,\dotsc\}$ in the compact space $X^X$.
A dynamical system $(X,f)$ is \emph{tame} if
the cardinal number of its enveloping semigroup is not greater than the cardinal number of $\mathbb{R}$ \cite{Gla}.
If a dynamical system is null, then it is also tame.
In \cite{GY09} Glasner and Ye proved that an interval map
it is null if and only if it is tame. The aim of this section to prove a generalization of the above result, as stated in Theorem~\ref{thm:LY=nIT}.

\subsection{IN-pairs and IT-pairs}\label{subsec:IN-IT-pairs}
In this subsection, we recall some definitions and result on local entropy theory,
which will be used later.

\begin{defn}
Let $(X,f)$ be a dynamical system.
For a tuple $(A_1,\dotsc,A_k)$ of subsets of $X$,
we say that a subset $J\subset \mathbb{Z}_+$ is an \emph{independence set} for $(A_1,\dotsc,A_k)$
if for any non-empty finite subset $I\subset J$, we have
\[\bigcap_{i\in I} f^{-i}A_{s(i)}\neq\emptyset\]
for all $s\in\{1,\dotsc,k\}^I$.
\end{defn}
It is clear that if $J$ is an independence set for $(A_1,\dotsc,A_k)$
then for every $m\in\mathbb{N}$, $J+m=\{i+m\colon i\in J\}$ is also an
independence set for $(A_1,\dotsc,A_k)$.

\begin{defn}
We call a tuple $(x_1,\dotsc,x_k)\in X^k$
\begin{enumerate}
\item an \emph{IE-tuple} if for every product neighborhood $U_1\times \dotsb\times U_k$
of $(x_1,\dotsc,x_k)$, the tuple  $(U_1, \dotsc, U_k)$ has an independence set of positive density;
\item an \emph{IT-tuple} if  for every product neighborhood $U_1\times \dotsb\times U_k$
of $(x_1,\dotsc,x_k)$, the tuple  $(U_1, \dotsc, U_k)$ has an infinite independence set;
\item an \emph{IN-tuple} if for every product neighborhood $U_1\times \dotsb\times U_k$
of $(x_1,\dotsc,x_k)$,
the tuple  $(U_1, \dotsc, U_k)$ has arbitrarily long finite independence sets.
\end{enumerate}
\end{defn}

Denote by $IE(X,f)$, $IN(X,f)$ and $IT(X,f)$ the collection of IE-pairs,
IN-pairs and IT-pairs for $(X,f)$ respectively.
It is clear that $IT(X,f)\subset IN(X,f)\subset IE(X,f)$.

\begin{thm}[{\cite[Theorem 2.6]{TYZ10}}]\label{thm:IN-IT-pairs}
Let $(X,f)$ be a dynamical system.
\begin{enumerate}
\item If $(x,y)$ is an IN-pair, then $\{x,y\}\subset\Omega(f)$;
\item both $IT(X,f)$ and $IN(X,f)$ are closed and $f\times f$-invariant subsets of $X\times X$;
\item for any $k\in\mathbb{N}$,  $IT(X,f)=IT(X,f^k)$ and $IN(X,f)=IN(X,f^k)$.
\end{enumerate}
\end{thm}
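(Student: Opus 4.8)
The plan is to verify the three assertions one at a time, in each case arguing directly from the definition of an independence set and exploiting two elementary stability properties: the independence condition passes to subsets, and it is preserved under a translation $J\mapsto J+m$ (as already noted in the excerpt).

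First I would treat \eqref{thm:IN-IT-pairs} part (1). Fix an IN-pair $(x,y)$ and an arbitrary neighborhood $U$ of $x$. Pick any neighborhood $U_2$ of $y$ and apply the IN property to the product neighborhood $U\times U_2$ to obtain an independence set for $(U,U_2)$ of length at least two, say $\{i,j\}$ with $i<j$. Choosing the constant assignment $s\equiv 1$ on $\{i,j\}$ forces $f^{-i}(U)\cap f^{-j}(U)\neq\emptyset$; if $z$ lies in this intersection then $w:=f^i(z)\in U$ and $f^{j-i}(w)=f^j(z)\in U$, so $U\cap f^{-(j-i)}(U)\neq\emptyset$ with $j-i\geq 1$. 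Hence $x\in\Omega(f)$, and by symmetry $y\in\Omega(f)$.

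For (2), closedness follows by showing the complements are open: if $(x,y)$ is not an IN-pair (resp. IT-pair), there are open sets $U_1\ni x$, $U_2\ni y$ witnessing the failure, and then every point of the open box $U_1\times U_2$ inherits the very same witness, so it too fails the condition. For $f\times f$-invariance I would show that $(f(x),f(y))$ is an IN-pair (resp. IT-pair) whenever $(x,y)$ is. Given a product neighborhood $V_1\times V_2$ of $(f(x),f(y))$, continuity yields open $U_1\ni x$, $U_2\ni y$ with $f(U_1)\subset V_1$ and $f(U_2)\subset V_2$, hence $U_i\subset f^{-1}(V_i)$; applying $f^{-i}$ gives $f^{-(i+1)}(V_{s(i)})\supset f^{-i}(U_{s(i)})$. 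Consequently, if $J$ is an independence set for $(U_1,U_2)$, then the translate $J+1$ is one for $(V_1,V_2)$. Since translation preserves cardinality (and infinitude), arbitrarily long — resp. infinite — independence sets transfer from $(U_1,U_2)$ to $(V_1,V_2)$, proving invariance for both $IN(X,f)$ and $IT(X,f)$.

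Part (3) is where the real work lies, and I expect the reverse inclusions to be the main obstacle. The easy inclusions $IN(X,f^k)\subset IN(X,f)$ and $IT(X,f^k)\subset IT(X,f)$ are routine: an independence set $J$ for $(U_1,U_2)$ with respect to $f^k$ produces the dilate $kJ$, of the same cardinality, which is an independence set with respect to $f$, since $(f^k)^{-i}=f^{-ki}$. For the reverse, start from an independence set $J$ for $(U_1,U_2)$ with respect to $f$ and split $J$ by residues modulo $k$; by the pigeonhole principle some class $J_r=\{\,r+km:m\in M\,\}$ contains at least $|J|/k$ of the elements (infinitely many in the IT case), and $J_r$ is still an $f$-independence set as a subset of $J$. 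The key observation is that $M$ is then an $f^k$-independence set for the same pair $(U_1,U_2)$: if $z$ realizes a required intersection $\bigcap_{m} f^{-(r+km)}U_{s(m)}\neq\emptyset$, then $w:=f^r(z)$ satisfies $(f^k)^m(w)=f^{r+km}(z)\in U_{s(m)}$ for every $m$, so $w\in\bigcap_m (f^k)^{-m}U_{s(m)}$. Thus $f$-independence sets of length $N$ yield $f^k$-independence sets of length at least $N/k$, and infinite ones yield infinite ones; letting $N$ be arbitrary gives $IN(X,f)\subset IN(X,f^k)$ and $IT(X,f)\subset IT(X,f^k)$. The delicate point is precisely that replacing $f$ by $f^k$ rescales the time axis by $k$, so one cannot reuse an $f$-independence set verbatim; the residue decomposition together with the shift $z\mapsto f^r(z)$ realigns all gaps to multiples of $k$ at the cost of only a factor $k$ in length, which is harmless since we only need arbitrarily long (or infinite) sets.
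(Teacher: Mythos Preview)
The paper does not supply its own proof of this theorem: it is quoted verbatim as \cite[Theorem~2.6]{TYZ10} and used as a black box, so there is nothing in the paper to compare your argument against.

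That said, your proposal is correct and self-contained. The three ingredients you isolate---subsets of independence sets are independence sets, translation $J\mapsto J+m$ preserves independence, and the residue-class pigeonholing to pass from $f$ to $f^k$---are exactly the standard ones. One tiny cosmetic point: in part~(2) you invoke ``the translate $J+1$'' for the pair $(V_1,V_2)$ rather than $(U_1,U_2)$; the paper's stated translation property is for the \emph{same} tuple, so strictly speaking you are combining translation with the monotonicity $U_i\subset f^{-1}(V_i)$, which you do spell out. Everything else is clean.
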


\begin{thm}[{\cite[Theorem 2.9]{TYZ10}}] \label{thm:IN-pair-preimage}
Let $(X,f)$ be a dynamical system.
If $(x,y)$ is an IN-pair, then there exists an IN-pair $(x',y')$
such that $f(x')=x$ and $f(y')=y$.
\end{thm}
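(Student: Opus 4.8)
The plan is to prove the statement by finding a single IN-pair inside the fibre product $f^{-1}(x)\times f^{-1}(y)$; such a pair $(x',y')$ automatically satisfies $f(x')=x$ and $f(y')=y$. First note that the required preimages exist: by Theorem~\ref{thm:IN-IT-pairs} an IN-pair lies in $\Omega(f)$, and since $X$ is compact $\Omega(f)\subset\overline{f(X)}=f(X)$, so $x,y\in f(X)$ and the compact sets $f^{-1}(x)$, $f^{-1}(y)$ are non-empty. I would then argue by contradiction, assuming that no point of the compact set $f^{-1}(x)\times f^{-1}(y)$ is an IN-pair.

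Under this assumption, for each $(u,v)$ in the fibre product the failure of the IN-condition produces a product neighbourhood together with an integer bounding the length of every independence set of that neighbourhood (recall that a subset of an independence set is again one, so boundedness of the length is the exact negation of ``arbitrarily long''). These neighbourhoods cover $f^{-1}(x)\times f^{-1}(y)$; I would pass to a finite subcover, take a Lebesgue number $\delta$, and choose finite covers $f^{-1}(x)\subset\bigcup_a P_a$ and $f^{-1}(y)\subset\bigcup_b Q_b$ by sets of diameter $<\delta/2$, each meeting the corresponding fibre. Then every grid cell $P_a\times Q_b$ contains a point of the fibre product and has diameter $<\delta$, hence lies inside one of the bad neighbourhoods, so there is a single integer $N$ such that for \emph{every} pair $(a,b)$ the tuple $(P_a,Q_b)$ has no independence set of cardinality $N$. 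Writing $W_1=\bigcup_a P_a\supset f^{-1}(x)$ and $W_2=\bigcup_b Q_b\supset f^{-1}(y)$, the sets $U_i:=X\setminus f(X\setminus W_i)$ are open neighbourhoods of $x,y$ with $f^{-1}(U_i)\subset W_i$.

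Next I would transfer independence downward through $f$. Since $(x,y)$ is an IN-pair, $(U_1,U_2)$ admits arbitrarily long finite independence sets, and by the stated forward-translation invariance I may assume such a set $J$ has $\min J\geq 2$. If a point $z$ witnesses a sign pattern on a finite $I\subset J$, then $f^{i+1}(z)\in U_{s(i)}$ gives $f^{i}(z)\in f^{-1}(U_{s(i)})\subset W_{s(i)}$ for $i\in I-1$; hence $J-1$, of the same cardinality, is an independence set for the pair of unions $(W_1,W_2)=(\bigcup_a P_a,\bigcup_b Q_b)$. Thus $(W_1,W_2)$ has arbitrarily long independence sets.

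The decisive step is then a combinatorial extraction: a long independence set for $(\bigcup_a P_a,\bigcup_b Q_b)$ must contain a still-long independence set for a single cell $(P_a,Q_b)$, with the guaranteed length tending to infinity as the original length grows. For each sign pattern one refines the witness by recording which piece $P_a$, resp.\ $Q_b$, the orbit actually enters at each coordinate, and the required sub-independence set that is monochromatic in these refined labels is produced by the Sauer--Shelah lemma, exactly as in the combinatorial-independence arguments underlying \cite{TYZ10}. This is the part I expect to be the main obstacle, since it is where the union over the finitely many pieces is paid for and where one must ensure that the retained length is genuinely unbounded rather than merely positive-density. Granting it, a sufficiently long independence set for $(W_1,W_2)$ yields one of length $N$ for some single cell $(P_a,Q_b)$, contradicting the uniform bound of the second paragraph. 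Therefore some $(x',y')\in f^{-1}(x)\times f^{-1}(y)$ must be an IN-pair, which is precisely the desired preimage.
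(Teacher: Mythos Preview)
The paper does not prove this theorem: it is quoted verbatim from \cite[Theorem~2.9]{TYZ10} and used as a black box (its only application is in the proof of Lemma~\ref{lem:INinOmega}). There is no argument in the paper to compare your attempt against.

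Your outline is nonetheless the standard route to such lifting statements for IN-tuples, as in \cite{TYZ10} and the Kerr--Li framework \cite{KL07}. The surrounding reductions are all correct: $\Omega(f)\subset f(X)$ gives non-empty fibres; the Lebesgue-number argument and the grid $\{P_a\times Q_b\}$ are fine; the construction $U_i=X\setminus f(X\setminus W_i)$ does produce open neighbourhoods of $x,y$ with $f^{-1}(U_i)\subset W_i$; and the shift $J\mapsto J-1$ correctly transfers independence for $(U_1,U_2)$ to independence for $(W_1,W_2)$. The only genuine gap is exactly the one you yourself flag as ``the main obstacle'': the claim that an independence set of length $n$ for the pair of finite unions $(\bigcup_a P_a,\bigcup_b Q_b)$ contains an independence set of length $g(n)\to\infty$ for some single cell $(P_{a_0},Q_{b_0})$. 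This is true, and it is established in \cite{KL07} and \cite{TYZ10} by applying Sauer--Shelah type combinatorics to the family of refined colourings $\{\tilde s:s\in\{1,2\}^J\}$ sitting inside the enlarged alphabet $\{1_1,\dots,1_p,2_1,\dots,2_q\}$; but it is not immediate, and you do not carry it out---you write ``granting it'' and proceed. So what you have is a correct and essentially complete \emph{plan} that coincides with the approach in the cited source, but not yet a self-contained proof; to close the gap you must state and prove (or cite precisely) the relevant combinatorial extraction lemma rather than invoke it by name.
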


\begin{thm}
Let $(X,f)$ be a dynamical system. Then
\begin{enumerate}
\item\label{ind:1} $(X,f)$ has zero topological entropy if and only if every IE-pair is diagonal;
\item\label{ind:2} $(X,f)$ is tame if and only if every IT-pair is diagonal;
\item\label{ind:3} $(X,f)$ is null if and only if every IN-pair is diagonal.
\end{enumerate}
\end{thm}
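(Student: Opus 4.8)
The plan is to deduce all three equivalences from the combinatorial theory of independence of Kerr and Li together with the relevant variational principles, so that the theorem can afterwards be invoked as a black box. First I would recall the local variational principle for topological entropy: $(X,f)$ has positive topological entropy if and only if it possesses a non-diagonal entropy pair. Kerr and Li's characterization of entropy tuples then identifies the non-diagonal entropy pairs with precisely those IE-pairs that are not diagonal, since a tuple is an entropy tuple exactly when each of its product neighbourhoods admits an independence set of positive density. Combining these two statements gives the first equivalence: $(X,f)$ has zero topological entropy if and only if every IE-pair is diagonal.

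For the second and third equivalences I would appeal to the Rosenthal-type dichotomies underlying tameness and nullness. Recall that $(X,f)$ is tame exactly when no continuous function $\varphi$ on $X$ has an orbit $\{\varphi\circ f^{n}\}_{n\ge 1}$ containing an $\ell^{1}$-sequence; by Rosenthal's $\ell^{1}$-theorem, the presence of such a sequence is equivalent to the existence of an \emph{infinite} independence set for some pair $(U_{1},U_{2})$ of disjoint non-empty open sets, i.e.\ to the existence of a non-diagonal IT-pair. Likewise, $(X,f)$ is null if and only if all its topological sequence entropies vanish, and the sequence-entropy pairs coincide with the IN-pairs (a pair fails to be null exactly when each product neighbourhood admits arbitrarily long finite independence sets). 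Hence $(X,f)$ is tame (respectively null) if and only if every IT-pair (respectively IN-pair) is diagonal.

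The technical heart of the argument --- and the place where all the real work sits --- is entirely inside these cited results: converting the size of the enveloping semigroup, or the growth rate of sequence entropy, into combinatorial independence. This rests on Rosenthal's dichotomy, the Bourgain--Fremlin--Talagrand theorem, and Sauer--Shelah / Karpovsky--Milman type counting. Since in this paper the theorem is used only as a tool, I would simply assemble these ingredients from the papers of Kerr and Li and from \cite{Gla}, \cite{GY09} and \cite{TYZ10}, giving precise references rather than reproducing the proofs.
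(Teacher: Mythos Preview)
Your proposal is correct and matches the paper's own treatment: the paper does not prove this theorem at all but simply records it with attributions, remarking that (1) was first proved by Huang and Ye \cite{HY06}, the necessary condition in (2) by Huang \cite{H06}, and that a complete proof of (1)--(3) was given by Kerr and Li \cite{KL07} using independence. Your write-up is slightly more expansive in sketching the mechanisms (Rosenthal's dichotomy, Sauer--Shelah type counting) behind the cited results, but the approach---assemble and cite rather than reprove---is exactly what the paper does.
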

We remark here that in the above theorem
\eqref{ind:1} was first proved by Huang and Ye \cite{HY06} using the notion of interpolating sets,
the necessary condition in \eqref{ind:2} was proved by Huang in \cite{H06} using the notion of scrambled pair.
A complete proof of \eqref{ind:1}--\eqref{ind:3} conditions was obtained by Kerr and Li \cite{KL07}, where they introduced the notion of independence.

\subsection{Proof of Theorem~\ref{thm:LY=nIT}}
First we need the following folklore result, which is an immediate consequence of Auslander-Ellis theorem (see e.g. \cite[Theorem 6.13]{A88}).
\begin{lem}\label{lem:min}
Let $(X,f)$ be a dynamical system.
If $f$ does not have Li-Yorke pairs, then for every $x\in X$, the set $\omega_f(x)$ is minimal.
\end{lem}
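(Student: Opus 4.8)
The plan is to derive this directly from the Auslander–Ellis theorem together with the elementary remark that, in the absence of Li-Yorke pairs, every proximal pair is asymptotic. Fix $x\in X$. Since $\omega_f(x)$ is a non-empty closed $f$-invariant set it contains a minimal subset $M$, and the Auslander–Ellis theorem (see \cite[Theorem 6.13]{A88}) provides a point $y\in M$ that is proximal to $x$. Being a minimal point, $y$ satisfies $\overline{\Orb_f(y)}=\omega_f(y)=M$.

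Next I would invoke the hypothesis. A scrambled (i.e. Li-Yorke) pair is by definition proximal but not asymptotic, so the proximal pair $(x,y)$, which is not scrambled, must be asymptotic: $d(f^n(x),f^n(y))\to 0$ as $n\to\infty$. Consequently $\omega_f(x)=\omega_f(y)$, since if $z=\lim_{k}f^{n_k}(x)$ for some $n_k\to\infty$ then also $f^{n_k}(y)\to z$, whence $z\in\omega_f(y)$, and the reverse inclusion follows by the symmetric argument. Therefore $\omega_f(x)=\omega_f(y)=M$ is minimal.

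The argument is short, and no serious obstacle is expected: the only steps requiring a word of care are the correct form of the Auslander–Ellis theorem — it must be applied so that the minimal point proximal to $x$ is located inside $\omega_f(x)$, which is exactly what makes $\omega_f(x)$ itself (and not merely some a priori larger invariant set) come out minimal — and the standard fact that for a minimal point $y$ one has $\omega_f(y)=\overline{\Orb_f(y)}$. Everything else is a direct unwinding of the definitions of proximal, asymptotic, and scrambled pairs.
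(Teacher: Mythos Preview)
Your argument is correct and is exactly the derivation the paper has in mind: the paper states the lemma as ``a folklore result, which is an immediate consequence of Auslander-Ellis theorem (see e.g.\ \cite[Theorem 6.13]{A88})'' without further detail, and your write-up supplies precisely that deduction. The only cosmetic point is that the standard Auslander--Ellis theorem yields \emph{some} minimal point $y\in\overline{\Orb_f(x)}$ proximal to $x$ rather than one in a pre-chosen $M$; but since any minimal point of $\overline{\Orb_f(x)}$ is recurrent and hence lies in $\omega_f(x)$, you may simply set $M:=\overline{\Orb_f(y)}$ afterward, and the rest of your argument goes through unchanged.
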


\begin{lem}\label{lem:omegas}
Let $f\colon G\to G$ be a graph map and assume that $f$ has no scrambled pairs.
If $(x,y)$ is a non-diagonal IN-pair and $\omega_f(x)$ is infinite, then $\omega_f(x)=\omega_f(y)$.
\end{lem}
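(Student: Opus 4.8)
\textbf{Proof proposal for Lemma~\ref{lem:omegas}.}

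The plan is to argue by contradiction, assuming $\omega_f(x)\neq\omega_f(y)$. Since $f$ has no scrambled pairs, Lemma~\ref{lem:min} tells us that both $\omega_f(x)$ and $\omega_f(y)$ are minimal sets, and by Theorem~\ref{thm:IN-IT-pairs}(1) we have $x,y\in\Omega(f)$. First I would pin down the relationship between the two $\omega$-limit sets. Because $(x,y)$ is an IN-pair, any product neighborhood $U\times V$ of $(x,y)$ has arbitrarily long finite independence sets; in particular the pair returns arbitrarily often (up to shifting) near itself, and one should be able to extract that the pair $(x,y)$ is recurrent for $f\times f$, hence contained in the closure of its own orbit, and moreover (using that $IN$-pairs lift to IN-pairs under preimages, Theorem~\ref{thm:IN-pair-preimage}, together with minimality of the $\omega$-limit sets) to conclude that $x$ itself is a minimal point, so $\omega_f(x)=\overline{\Orb_f(x)}$ and likewise for $y$. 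Thus $M_x:=\omega_f(x)$ and $M_y:=\omega_f(y)$ are two disjoint minimal sets (if they intersect, minimality forces equality), with $M_x$ infinite.

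The heart of the argument is to derive a contradiction from the coexistence of a non-diagonal IN-pair spread across two disjoint minimal sets, one of which is an infinite $\omega$-limit set of a graph map. Here I would invoke the structure theory of Section~2: since $M_x$ is an infinite minimal subset of the graph $G$, it is either a solenoid or is contained in a basic/circumferential set. It cannot be a basic set (that would give positive entropy by Theorem~\ref{thm:basic-set-and-rotation}(1), contradicting that $f$ is null, hence zero entropy — here I use that a null system has zero topological entropy, or alternatively that $IN$-pairs being non-diagonal only in null systems is consistent with, but here we already know no scrambled pairs forces zero entropy via Theorem~\ref{thm:scrambled3tuple}). If $M_x$ lies in a circumferential set $E=E(X,f)$, then by Lemma~\ref{lem:non-solenoid} and Theorem~\ref{thm:basic-set-and-rotation}(2), $(E,f)$ almost conjugates to an irrational rotation of the circle, which is equicontinuous and hence has no non-diagonal IN-pairs inside it; one then must push the independence structure through the semi-conjugacy $\varphi$ to the rotation and reach a contradiction with the fact that a rotation is null. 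If $M_x$ is a solenoid, then by Lemma~\ref{lem:solenoid} it sits inside a nested sequence of cycles of graphs $X_n$ with periods $k_n\to\infty$; I would show that for large $n$ the point $y$ must eventually enter (a component of) $X_n$ as well — otherwise $x$ and $y$ are permanently separated by the gaps between the pairwise disjoint subgraphs $f^i(X_n)$, which are bounded below in distance, contradicting that an IN-pair must have $\liminf_n d(f^n(x),f^n(y))=0$ (IN-pairs are in particular proximal, being limits under the relevant independence structure — or more directly, a non-diagonal IN-pair that is not proximal would, after passing to an asymptotic-type analysis, force a scrambled pair, which is excluded). Once $x$ and $y$ track the same sequence of cycles of graphs, within a single component $K$ of some $X_n$ (which, after replacing $f$ by $f^k$ and discarding branch points as in the proof of Theorem~\ref{thm:scrambled3tuple}, we may take to be an interval) both $\omega$-limit sets lie in the same solenoidal interval system; I would then apply the interval result (Franzova–Sm\'\i tal \cite{FS91}, or \cite{L11}) that an interval map without scrambled pairs is null and its IN-pairs are diagonal, forcing $x$ and $y$ to have the same $\omega$-limit set after all — contradiction, or directly $\omega_f(x)=\omega_f(y)$.

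I expect the main obstacle to be the step where $y$ is forced into the same cycle of graphs as $x$: this requires extracting genuine proximality (or at least eventual closeness along a full-density set of times) of the IN-pair $(x,y)$ from the purely combinatorial independence condition, and then leveraging the rigid geometry of the disjoint subgraphs $f^i(X_n)$. The subtlety is that IN-pairs need not be proximal in general dynamical systems, so one must use that $f$ has no scrambled pairs (hence every proximal pair is asymptotic, and every $\omega$-limit set is minimal) to upgrade the IN-pair to something that respects the solenoidal tower. A second, more technical obstacle is handling the passage from the graph $G$ to an interval cleanly — ensuring that replacing $f$ by an iterate and restricting to a single component does not destroy the IN-pair, which is exactly guaranteed by Theorem~\ref{thm:IN-IT-pairs}(3) ($IN(X,f)=IN(X,f^k)$) and the $f\times f$-invariance in Theorem~\ref{thm:IN-IT-pairs}(2).
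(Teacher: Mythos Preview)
Your proposal has a genuine gap precisely where you flag it: forcing $y$ into the same cycle of graphs as $x$. You try to extract proximality from the IN-pair condition, but IN-pairs need not be proximal even under strong hypotheses (there exist distal, non-null minimal systems), and your suggested workaround --- ``a non-diagonal IN-pair that is not proximal would force a scrambled pair'' --- is not justified and is likely false without additional input. The paper sidesteps this entirely with a short, direct argument that uses only the \emph{definition} of independence and the invariance of a cycle of graphs. Namely: if $K$ is any cycle of graphs containing $\omega_f(x)$, choose $n$ with $f^n(x)$ in a free arc $U\subset K$ and $\delta>0$ with $f^n(B_\delta(x))\subset U$; then every $z\in B_\delta(x)$ satisfies $f^m(z)\in K$ for all $m>n$. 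Now for any neighborhood $V$ of $y$, take an independence set $J$ for $(B_\delta(x),V)$ long enough to contain indices $j_1<j_2$ with $j_2-j_1>n$, and a point $z$ with $f^{j_1}(z)\in B_\delta(x)$, $f^{j_2}(z)\in V$; then $f^{j_2}(z)\in K\cap V$, so $y\in K$. The same trick gives $x\in K$. This is the missing idea in your argument.

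Once $x,y\in K$ for every cycle of graphs $K\supset\omega_f(x)$, the paper finishes much more quickly than you propose. In the solenoid case one gets $x,y\in\bigcap_n X_n$, hence $\omega_f(y)\subset\bigcap_n X_n$; since some component of $X_n$ has arbitrarily small diameter, $\omega_f(x)\cap\omega_f(y)\neq\emptyset$, and minimality (Lemma~\ref{lem:min}) gives equality. In the non-solenoid case both $\omega_f(x)$ and $\omega_f(y)$ sit in the minimal cycle $X$ and hence in the same maximal $\omega$-limit set $E(X,f)$, which is minimal; again equality follows. Your plan to reduce to the interval case via \cite{FS91,L11}, and to push independence through the almost conjugacy to a rotation, is essentially what the paper does in the \emph{next} lemma (Lemma~\ref{lem:no-LY-pairs1}) to rule out non-diagonal IN-pairs altogether --- it is overkill for the present statement, which only asks for $\omega_f(x)=\omega_f(y)$.
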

\begin{proof}	
Let $K$ be a cycle of graphs containing $\omega_f(x)$.
Clearly, there is an $n$ such that $f^n(x)\in U \subset K$ where $U$ is a free arc (i.e. $U$ does not contain branching points).
Then there is $\delta>0$ such that $f^n(B_\delta(x))\subset U$ and so for every $z\in B_\delta(x)$ and
every $m>n$ we have $f^m(z)\in K$. This by definition of IN-pair and the fact that
$K$ is closed immediately implies that $x\in K$ and by the same argument $y\in K$.

If $\omega_f(x)$ is a solenoid then by Lemma~\ref{lem:solenoid} there exists a sequence
of cycles of graphs $X_n$ with strictly increasing periods such that
$\omega_f(x)\subset \bigcap_{n\geq 1} X_n$. Then also $\omega_f(y)\subset \bigcap_{n\geq 1} X_n$.
Since for every $\eps>0$ there is $n$ and a graph in the cycle $X_n$
with diameter smaller than $\eps$, we obtain that $\omega_f(x)\cap \omega_f(y)\neq \emptyset$.
By Lemma~\ref{lem:min} this implies $\omega_f(x)=\omega_f(y)$.

Next, suppose that $\omega_f(x)$ is not a solenoid. Since the topological entropy of $f$ is zero,
by Lemma~\ref{lem:non-solenoid} $\omega_f(x)$ is contained in a circumferential set.
By \cite[Lemma~14]{RS14} both $\omega_f(x)$, $\omega_f(y)$ are contained in the same maximal
$\omega$-limit set, which is a minimal set by Lemma~\ref{lem:min}. Again $\omega_f(x)=\omega_f(y)$.
\end{proof}

\begin{lem}\label{lem:no-LY-pairs1}
Let $f\colon G\to G$ be a graph map and assume that $f$ has no scrambled pairs.
For every $x\in G$, if $\omega_f(x)$ is infinite
then $(x,y)$ is not an IN-pair for every $y\in G\setminus\{x\}$.
\end{lem}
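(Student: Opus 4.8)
The plan is to argue by contradiction: assume $(x,y)$ is a non-diagonal IN-pair with $\omega_f(x)$ infinite. Since $f$ has no scrambled pairs, Proposition~\ref{prop:Devaney-chaos} gives that $f$ has zero topological entropy, and Lemma~\ref{lem:min} gives that every $\omega$-limit set of $f$ is minimal. By Lemma~\ref{lem:omegas}, $\omega_f(x)=\omega_f(y)=:\Lambda$ is an infinite minimal set, and since a basic set forces positive entropy (Theorem~\ref{thm:basic-set-and-rotation}), $\Lambda$ is either a solenoid or lies in a circumferential set; let $X$ be the associated cycle of graphs from Lemma~\ref{lem:solenoid} (resp.\ Lemma~\ref{lem:non-solenoid}). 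Repeating the first paragraph of the proof of Lemma~\ref{lem:omegas} shows $x,y\in X$; moreover, since $f^m(B_\delta(x))\subset X$ and $f^m(B_\delta(y))\subset X$ for all large $m$ and suitably small $\delta$, and since independence sets may be freely translated, one checks that $(x,y)$ is in fact an IN-pair for the subsystem $(X,f|_X)$.

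The heart of the argument is to transport this IN-pair through the canonical structure map of $X$. In the solenoid case the $k_n$ connected components of $X_n$ are clopen in $X_n$ and are cyclically permuted by $f$, yielding a factor map of $(X_n,f)$ onto the periodic orbit $\Z/k_n$; in the circumferential case Lemma~\ref{lem:non-solenoid} together with Theorem~\ref{thm:basic-set-and-rotation} gives a semiconjugacy $\varphi\colon X\to Y$ onto an irrational rotation. In either case the factor is equicontinuous, hence null, so it has only diagonal IN-pairs. Pushing $(x,y)$ forward, we conclude that $x$ and $y$ lie in a common component of $X_n$ for every $n$, resp.\ in a common fibre $\varphi^{-1}(z)$. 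Suppose first this is \emph{not} the case, i.e.\ some cycle of graphs $Z\supset\Lambda$ of period $k\geq 2$ has $x,y$ in distinct components $C_x,C_y$. Fix disjoint neighbourhoods $U\ni x$, $V\ni y$, and choose $N_0$ and shrink $U,V$ so that $f^{N_0}(U)$, $f^{N_0}(V)$ are connected subsets of $\Int_G Z$ contained in $\sigma^{N_0}(C_x)$, resp.\ $\sigma^{N_0}(C_y)$ (here $\sigma$ is the cyclic permutation of components), and so that $U\cap Z\subset C_x$, $V\cap Z\subset C_y$. Then any orbit first entering $U\cup V$ at some time $t\geq N_0$ is trapped in $Z$ from time $t+N_0$ on, and at time $t+N_0+r$ it occupies $\sigma^{r+N_0}(C_x)$ or $\sigma^{r+N_0}(C_y)$ according as it entered $U$ or $V$. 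For a finite independence set $I$ for $(U,V)$ translated into $[N_0,\infty)$ and any $i\in I$ with $i\geq\min I+N_0$, realizability of the patterns $(s(\min I),s(i))=(1,1)$ and $(1,2)$ on the pair $\{\min I,i\}$ forces both $k\mid(i-\min I)$ and $i-\min I\not\equiv 0\pmod k$, a contradiction; hence $|I|\leq N_0$, contradicting the IN-pair property of $(x,y)$.

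It remains — and this I expect to be the crux — to handle the ``non-separated'' case, in which $x$ and $y$ share every component / fibre, so that they lie in a subgraph $S$ (the intersection of those common components, or $\varphi^{-1}(z)$) whose forward iterates $\{f^i(S)\}_{i\geq 0}$ are pairwise disjoint subgraphs. Then $\diam f^i(S)\to 0$, so $(x,y)$ is asymptotic, and since $\{x,y\}\subset\Omega(f)$ by Theorem~\ref{thm:IN-IT-pairs}, the points $x,y$ must lie on the finite set $\partial S$. The plan to close this is to upgrade the trapping argument: past time $N_0$ every orbit visiting $U\cup V$ is confined to the shrinking tube $\bigcup_{i\geq N_0}f^i(S)$, and because the $f^i(S)$ are pairwise disjoint with vanishing diameters, the visits of such an orbit to $U$ and to $V$ are correlated too rigidly for a long independence set to realize all its patterns. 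Equivalently, one may first show that $x,y\in\Lambda$ (a non-wandering analysis, together with the fact that $x$ lies in every cycle of graphs containing $\Lambda$, pushes $x$ onto $\Lambda$ itself) and then invoke the structure theory to see that $(\Lambda,f|_\Lambda)$ is null — an odometer in the solenoid case, a Denjoy-type almost one-to-one extension of a rotation in the circumferential case — which returns us to the separated case. The delicate point throughout is that the witnesses of an independence set are a priori arbitrary points of $G$, so the trapping lemma (orbits that reach small neighbourhoods of $\Lambda$ are eventually confined to $X$, and then to ever smaller pieces of it) is precisely what makes the combinatorial counting possible.
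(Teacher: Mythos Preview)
Your overall strategy---contradiction, invoke Lemma~\ref{lem:omegas} to get $\omega_f(x)=\omega_f(y)$, trap the IN-pair inside the cycle of graphs, then split into the solenoid and circumferential cases---matches the paper's, and your push-forward to the equicontinuous factor correctly yields that $x,y$ lie in the same component of every $X_n$ (resp.\ the same fibre $\varphi^{-1}(z)$). Your separated-case paragraph is therefore redundant, though not wrong. The genuine problem is exactly where you say it is: the non-separated case.

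Both closures you propose fail. The tube argument establishes only that $(x,y)$ is asymptotic, but an asymptotic pair can perfectly well be a non-diagonal IN-pair: independence is witnessed by arbitrary points of $G$, not by $x$ and $y$ themselves, so $\diam f^i(S)\to 0$ yields no contradiction, and ``visits are correlated too rigidly'' is not an argument. The null-subsystem route has a more basic flaw: even granting $x,y\in\Lambda$ and that $(\Lambda,f|_\Lambda)$ is null, nothing follows, because IN-pairs do \emph{not} restrict to closed invariant subsets. Your trapping device works for the cycles $X_n$ precisely because they have nonempty interior and absorb nearby orbits after a bounded time $N_0$; the set $\Lambda$ is nowhere dense, so no such absorption is available, and the phrase ``returns us to the separated case'' has no content.

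The paper closes each case by a specific structural argument that your sketch does not supply. In the solenoid case it picks $n$ so large that one component $C_n$ is an arc (there are only finitely many branching points), reduces to the interval map $f^{k_n}|_{C_n}$, and invokes the known interval fact (no scrambled pairs $\Rightarrow$ the nested periodic cycles around the solenoidal $\omega$-limit set have component diameters tending to~$0$, cf.~\cite{R15}); since $x,y$ share a component of every $X_n$, this forces $x=y$ directly. In the circumferential case, having obtained $\pi(x)=\pi(y)$, it exploits the order-preserving nature of the circle rotation: one chooses disjoint $U',V'\ni x,y$ whose $\pi$-images lie in complementary arcs $U,V\subset Y$ meeting only at $\pi(x)$, and then any independence set of length two for $(U',V')$ would force some iterate of the rotation to reverse the cyclic order of two nearby points of $Y$, which is impossible.
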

\begin{proof}
We prove this result by a contradiction.
Assume that there is $y\in G\setminus\{x\}$ such that $(x,y)$ is an IN-pair.
By Lemma~\ref{lem:omegas} we have $\omega_f(x)=\omega_f(y)$.

We have two cases on $\omega_f(x)$ to consider.
First assume that $\omega_f(x)$ is a solenoid.
By Lemma~\ref{lem:solenoid} there exists a sequence of
of cycles of graphs $X_n$ with strictly increasing periods such that $\omega(x)\subset \bigcap_{n\geq 1} X_n$.
There is $s>0$ such that $f^s(x)\in \Int(X_1)$. But by
the definition of IN-pair, for any $\delta>0$
there is $j>s$ and $u,v\in G$ such that $f^s(u),f^s(v)\in X_1$ and $d(f^j(u),x)<\delta$ and $d(f^j(u),y)<\delta$. This implies that $x,y\in X_1$ and by the same argument $x,y\in \bigcap_{n\geq 1} X_n$.
Since there are finitely many branching points, there are $k,m\geq 0$ and an interval $I$ such that
we can view $\omega_{f^m}(f^k(x))$ as an invariant subset of the interval map $f^m|_I$.
But $f^m|_I$ does not have scrambled pairs, hence for any $\delta>0$ and any sufficiently long cycle of graphs for $f^m|_I$ containing $\omega_{f^m}(f^k(x))$, their connected components
have diameters bounded by $\delta$ (e.g. see \cite{R15}). But if $Y$ is a cycle of graphs covering $\omega_{f^m}(f^k(x))$ then $\bigcup_{j=0}^{m-i}f^j(Y)$ is a cycle of graphs covering $\omega_f(x)$.
This implies that for any $\eps>0$ diameters of all connected components of $X_n$ are uniformly bounded by $\eps>0$ provided that $n=n(\eps)$ is sufficiently large. But since $x,y$ have to belong to the same connected component of $X_n$  for each $n$, we obtain that $x=y$ which is a contradiction.

Now assume that $\omega_f(x)$ is not a solenoid.
Let $K$ be a cycle of graphs containing $\omega_f(x)$. It is clear that $x,y\in K$
and since they are IN-pair, they have to be contained in the same connected component of $K$.
By Theorem~\ref{thm:IN-IT-pairs}, IN-pair for $f$ is IN-pair for $f^k$ for any $k$,
hence we may assume that $K$ is connected.
As $f$ has zero entropy, by Theorems~\ref{lem:non-solenoid} and \ref{thm:basic-set-and-rotation}
there exists an almost conjugacy $\pi \colon K \to Y$ between $f|_K$
and an irrational rotation on the unit circle $(Y,g)$.
This immediately implies that $\pi(x)=\pi(y)$.
Let $U,V$ be small intervals in $Y$ with $U\cap V=\pi(x)$. Since $\pi^{-1}(\pi(x))$ is a non-degenerate connected set,
there are pairwise disjoint open sets $U',V'\subset K$, $x\in U'$, $y\in V'$ such that $\pi(U')\subset U$ and $\pi(V')\subset V$.
Clearly there are no $p_1\in U'$,$p_2\in V'$ and $0<k<l$
such that $\pi(p_1),\pi (f^k(p_2)),\pi(f^l(p_1))\in U$ and $\pi(p_2),\pi (f^k(p_1)),\pi (f^l(p_2))\in V$
because rotation preserves local ordering of $Y$ which is a contradiction.
\end{proof}

\begin{lem}\label{lem:INinOmega}
Let $f\colon G\to G$ be a graph map.
If $(x,y)$ is an IN-pair then $\{x,y\}\subset \omega(f)$.
\end{lem}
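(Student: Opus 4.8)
The plan is to obtain Lemma~\ref{lem:INinOmega} by combining Theorem~\ref{thm:IN-IT-pairs} with the structure theory of $\omega$-limit sets of graph maps, after first reducing it to a purely dynamical statement. Since $IN(X,f)=IN(X,f^k)$ for every $k\geq 1$ by Theorem~\ref{thm:IN-IT-pairs}, an IN-pair $(x,y)$ of $f$ is an IN-pair of $f^k$ for every $k$, so by the first item of that theorem $\{x,y\}\subset\Omega(f^k)$ for all $k\geq 1$, hence $\{x,y\}\subset\bigcap_{k\geq 1}\Omega(f^k)$. It therefore suffices to prove that for an arbitrary graph map $f$ one has $\bigcap_{k\geq 1}\Omega(f^k)\subset\omega(f)$; granting this, the lemma follows at once, and symmetrically for $y$.

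To prove the inclusion I would use the Blokh classification recalled in \cite{RS14}. Fix $x\in\bigcap_{k\geq 1}\Omega(f^k)$. If $x$ is periodic then $x\in\omega_f(x)\subset\omega(f)$, so assume not. The goal is to place $x$ inside some infinite $\omega$-limit set. Using that non-wandering behaviour is inherited by preimages of IN-pairs (Theorem~\ref{thm:IN-pair-preimage}), one builds a backward orbit of the pair inside $\Omega(f)\times\Omega(f)$ and passes to the nonempty compact set $\Lambda=\bigcap_{N}\overline{\{x_n:n\geq N\}}$ of limit points of the backward orbit, which satisfies $f(\Lambda)=\Lambda$ and $\Lambda\subset\Omega(f)$; then, according to whether the infinite $\omega$-limit sets met by $\Lambda$ and by $x$ are solenoids or not, one uses Lemma~\ref{lem:solenoid} (nested cycles of graphs of shrinking diameter, which force $x$ into $\bigcap_n X_n$) or Lemma~\ref{lem:non-solenoid} together with Theorem~\ref{thm:basic-set-and-rotation} (the transitive almost factor --- a basic set or an irrational rotation) to locate $x$ inside an actual $\omega$-limit set, along the lines of the proofs of Lemmas~\ref{lem:omegas} and \ref{lem:no-LY-pairs1} above. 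Equivalently, one may simply quote the known description that for graph maps $\omega(f)$ is closed and coincides with $\overline{\mathrm{Rec}(f)}=\bigcap_{k\geq 1}\Omega(f^k)$.

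The main obstacle is precisely this structural inclusion $\bigcap_{k\geq 1}\Omega(f^k)\subset\omega(f)$. It is genuinely one-dimensional: for general continuous maps a point that is non-wandering for every power need not belong to any $\omega$-limit set, and $\overline{\mathrm{Rec}(f)}$ need not be contained in $\omega(f)$ once $\omega(f)$ fails to be closed, so there is no soft argument --- passing to limit points of approximate return orbits breaks down because high iterates of a graph map are far from equicontinuous. One is forced to use graph-specific input: finiteness of the branching set, the free-arc local structure, and Blokh's description of infinite $\omega$-limit sets. By contrast, the reduction from IN-pairs to this structural statement, which uses only Theorem~\ref{thm:IN-IT-pairs}, is immediate, so essentially all of the work sits in the inclusion above.
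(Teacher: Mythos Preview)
The paper takes a shorter and somewhat different route. Rather than reducing to $\bigcap_{k\geq 1}\Omega(f^k)$, it uses Theorem~\ref{thm:IN-pair-preimage} directly: for every $n$ there is an IN-pair $(x_n,y_n)$ with $f^n(x_n)=x$ and $f^n(y_n)=y$, and since each such pair sits in $\Omega(f)\times\Omega(f)$ by Theorem~\ref{thm:IN-IT-pairs}, one gets $\{x,y\}\subset\bigcap_{n\geq 0}f^n(\Omega(f))$. The structural fact invoked is then the identity $\omega(f)=\bigcap_{n\geq 0}f^n(\Omega(f))$ for graph maps, which is precisely Theorem~3 of Mai--Sun \cite{MS07v2}. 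No Blokh case analysis is needed.

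Your reduction in the first paragraph is correct, but the structural statement you then aim for, $\bigcap_{k\geq 1}\Omega(f^k)\subset\omega(f)$, is \emph{not} the one available in the literature: what is known for graph maps is $\omega(f)=\overline{\mathrm{Rec}(f)}$ and $\omega(f)=\bigcap_{n}f^n(\Omega(f))$, and neither obviously yields $\bigcap_{k}\Omega(f^k)\subset\omega(f)$. Your second paragraph also slips logically: you begin with an arbitrary $x\in\bigcap_k\Omega(f^k)$ and then invoke Theorem~\ref{thm:IN-pair-preimage}, but that theorem only produces preimages of IN-pairs, not of arbitrary non-wandering points, so it cannot drive a proof of the general inclusion. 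Ironically, that very step --- building a backward orbit of the pair inside $\Omega(f)$ via Theorem~\ref{thm:IN-pair-preimage} --- is exactly what the paper does; it simply observes that this places $x,y$ in $\bigcap_n f^n(\Omega(f))$ and quotes \cite{MS07v2}, instead of passing to a limit set $\Lambda$ and attempting a Blokh-style trichotomy. So the clean fix is to abandon the $\bigcap_k\Omega(f^k)$ detour and run the preimage argument to land in $\bigcap_n f^n(\Omega(f))$.
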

\begin{proof}
By Theorem \ref{thm:IN-pair-preimage}, for every $n\in\mathbb{N}$,
there exists an IN-pair $(x_n,y_n)$ such that $f^n(x_n)=x$ and $f^n(y_n)=y$.
By Theorem \ref{thm:IN-IT-pairs}, we know that every point in an IN-pair is non-wandering.
So $\{x,y\}\subset \bigcap_{n=0}^\infty f^n(\Omega(f))$.
Now the result follows from the fact $\omega(f)=\bigcap_{n=0}^\infty f^n(\Omega(f))$
which is proved in \cite[Theorem 3]{MS07v2}.
\end{proof}

\begin{lem}\label{lem:no-LY-pairs2}
Let $f\colon G\to G$ be a graph map and assume that $f$ has no scrambled pairs.
For every $x\in G$, if $\omega_f(x)$ is finite
then $(x,y)$ is not an IN-pair for every $y\in G\setminus\{x\}$.
\end{lem}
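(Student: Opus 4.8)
The plan is to argue by contradiction and to reduce everything to the case of two distinct fixed points. Suppose $(x,y)$ is an IN-pair with $x\ne y$ and $\omega_f(x)$ finite. Since $f$ has no scrambled pairs, it has zero topological entropy (otherwise Proposition~\ref{prop:Devaney-chaos} would give a Devaney chaotic subsystem, hence scrambled pairs), so Lemma~\ref{lem:min} applies and every $\omega$-limit set of $f$ is minimal. By Lemma~\ref{lem:INinOmega}, $x\in\omega(f)$, so $x$ lies in a minimal $\omega$-limit set; since that set contains $\omega_f(x)$ and $\omega_f(x)$ is finite, $\omega_f(x)$ is a periodic orbit containing the recurrent point $x$, hence $x$ is periodic. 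Similarly $y\in\omega(f)$ and $\omega_f(y)$ is minimal; were it infinite, then, IN-pairs being symmetric, $(y,x)$ would be an IN-pair with $\omega_f(y)$ infinite and $y\ne x$, contradicting Lemma~\ref{lem:no-LY-pairs1}. Hence $\omega_f(y)$ is finite and $y$ is periodic. Now let $N$ be a common multiple of the periods of $x$ and $y$ and set $g=f^N$: then $x,y$ are distinct fixed points of $g$, $(x,y)\in IN(g)$ by Theorem~\ref{thm:IN-IT-pairs}, and $g$ still has no scrambled pairs (a scrambled pair of $g$ is a scrambled pair of $f$). So it suffices to show that a graph map $g$ without scrambled pairs has no IN-pair consisting of two distinct fixed points.

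This last step is the crux. Observe that such a pair $(x,y)$ would be distal, not proximal, since $g^k(x)=x\ne y=g^k(y)$ for all $k$. Fix disjoint connected neighbourhoods $U\ni x$, $V\ni y$ with disjoint closures, small enough that $g(\overline U)\cap\overline V=g(\overline V)\cap\overline U=\emptyset$. As $(x,y)$ is an IN-pair, $(U,V)$ has arbitrarily long finite independence sets, so for each $\ell$ there is a point $w_\ell$ whose orbit realizes the alternating itinerary $U,V,U,V,\dots$ at $\ell$ prescribed times. I would now invoke the structure of $\omega$-limit sets of zero-entropy graph maps: since $g$ has no scrambled pairs, every $\omega$-limit set is minimal (Lemma~\ref{lem:min}) and $g$ has no solenoid (by \cite{RS14}), while $g$ has no basic set by Theorem~\ref{thm:basic-set-and-rotation}; hence every infinite $\omega$-limit set of $g$ is circumferential, that is, almost conjugate to an irrational rotation of the circle. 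The aim is then to show that arbitrarily long independent itineraries between small neighbourhoods of two distinct fixed points cannot coexist with this picture: passing to a limit point of $\{w_\ell\}$ and examining the induced dynamics on the minimal cycle of graphs carrying its $\omega$-limit set, one should be driven into one of the excluded cases — a genuine scrambled pair, a basic set (hence positive entropy), or a violation of the local order preserved by the rotation factor, in the spirit of the proof of Lemma~\ref{lem:no-LY-pairs1}.

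The main obstacle is precisely extracting a contradiction from \emph{finite} (however long) independence sets rather than from an infinite one, and the fact that, unlike in Lemma~\ref{lem:no-LY-pairs1}, there need not be a single infinite $\omega$-limit set seen by both $x$ and $y$, so the rotation factor is not handed to us directly. One way to organize the argument is to first push the IN-pair into the $1$-periodic cycle of graphs $G_\infty=\bigcap_{n\ge1}g^n(G)$ — legitimate because $x,y\in\omega(g)\subset G_\infty$, and because Theorem~\ref{thm:IN-pair-preimage} together with compactness and the closedness of $IN(g)$ permit taking backward limits of IN-pairs — and then carry out the analysis for the surjective map $g|_{G_\infty}$, where the classification of recurrent behaviour is cleanest; I expect the uniform bound on the cardinalities of the fibres of the almost conjugacy (coming from the finitely many branch points of $G$) to be the technical device that finally closes the argument.
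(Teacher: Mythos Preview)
Your reduction to the case of two distinct fixed points is essentially the same as the paper's: the paper also argues by contradiction, uses Lemma~\ref{lem:INinOmega} to place $x,y$ in $\omega(f)$, uses Lemma~\ref{lem:min} to see both are periodic (the paper invokes Lemma~\ref{lem:omegas} rather than Lemma~\ref{lem:no-LY-pairs1} to get $\omega_f(y)$ finite, but your use of symmetry and Lemma~\ref{lem:no-LY-pairs1} is equally valid), and then passes to an iterate to make both points fixed via Theorem~\ref{thm:IN-IT-pairs}. So far so good.

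The gap is your ``crux''. The paper does not attempt to prove this step directly; instead it simply cites \cite[Theorem~5.1]{T11}, which says exactly that a graph map possessing an IN-pair of distinct fixed points must have positive topological entropy --- contradicting the absence of scrambled pairs. Your sketch for this step is not a proof: you correctly identify that the difficulty lies in extracting a contradiction from arbitrarily long \emph{finite} independence sets (rather than an infinite one), but the proposed route through limit points of the $w_\ell$, circumferential $\omega$-limit sets, and ``violation of local order preserved by the rotation factor'' does not close. In particular, there is no reason the accumulation point of the $w_\ell$'s should have an infinite $\omega$-limit set at all, so the rotation factor may never appear; and even restricting to $G_\infty$ does not by itself produce the combinatorial obstruction you need. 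The result in \cite{T11} uses a genuinely different argument (building a horseshoe from the independence structure near the two fixed points), and reproving it from the circumferential/solenoid classification alone, as you attempt, would require substantial additional work that your outline does not supply.
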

\begin{proof}
We also prove this result by a contradiction.
Assume that there is $y\in G\setminus\{x\}$ such that $(x,y)$ is an IN-pair.
By Lemma~\ref{lem:omegas}, $\omega_f(y)$ is also finite.
Since $(x,y)$ is an IN-pair, by Lemma~\ref{lem:INinOmega}, $\{x,y\}\subset\omega(f)$.
Moreover,  By Lemma~\ref{lem:min}, every point in $\omega(f)$ is minimal.
As $\omega_f(x)$ and $\omega_f(y)$ are finite, $x$ and $y$ must be periodic points.
By Theorem~\ref{thm:IN-IT-pairs}, IN-pair for $f$ is also an IN-pair for $f^k$ for any $k$,
hence we may assume that both $x$ and $y$ are fixed points.
By \cite[Theorem 5.1]{T11}, $f$ has positive entropy. This is a contradiction.
\end{proof}

\begin{lem}\label{lem:tameLY}
Let $f\colon G\to G$ be a graph map.
If $f$ is  tame then there is no scrambled pair.
\end{lem}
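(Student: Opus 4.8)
The plan is to argue by contradiction: assume that $f$ is tame but nonetheless admits a scrambled pair $(x,y)$, and produce from it a non-diagonal IT-pair of $f$, contradicting the characterization that $f$ is tame if and only if every IT-pair of $f$ is diagonal. Since every tame dynamical system has zero topological entropy, Lemma~\ref{lem:scrambled-soleniod} applies and shows that $\omega_f(x)$ is a solenoid. So the task is reduced to extracting an IT-pair from a scrambled pair whose first coordinate generates a solenoid.

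The main step is a passage to interval dynamics, carried out exactly as in the proofs of Theorems~\ref{thm:scrambled3tuple} and~\ref{thm:proximal}. Using the nested sequence of cycles of graphs provided by Lemma~\ref{lem:solenoid} together with the fact that $G$ has only finitely many branching points, one finds integers $n,k>0$ and a closed interval (free arc) $I\subset G$ with $f^n(I)\subset I$ such that $\omega_{f^n}(f^k(x))$ is an infinite minimal subset of the interval map $f^n|_I$; and, since $(x,y)$ is proximal, an integer $j\geq 0$ with $f^j(x),f^j(y)\in I$ for which $(f^j(x),f^j(y))$ is a scrambled pair of $f^n|_I$. By the theorems of Franzova and Sm\'\i tal \cite{FS91} and of Glasner and Ye \cite{GY09} (see also \cite{L11}), an interval map that possesses a scrambled pair is not null, hence not tame; therefore $f^n|_I$ has a non-diagonal IT-pair $(a,b)\in I\times I$.

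It remains to transport $(a,b)$ back to $(G,f)$. Since $f^n(I)\subset I$, one has $(f^n|_I)^i(z)=(f^n)^i(z)$ for every $z\in I$, so for any open sets $A\ni a$ and $B\ni b$ in $G$ every independence set of the pair $(A\cap I,\,B\cap I)$ for $f^n|_I$ is also an independence set of $(A,B)$ for $(G,f^n)$; consequently $(a,b)$ is a non-diagonal IT-pair of $(G,f^n)$. By Theorem~\ref{thm:IN-IT-pairs} we have $IT(G,f)=IT(G,f^n)$, so $(a,b)$ is a non-diagonal IT-pair of $f$, and hence $f$ is not tame --- the desired contradiction.

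I expect the genuinely delicate point to be the reduction in the second paragraph, specifically the assertion that $(f^j(x),f^j(y))$ is still a scrambled pair for the power $f^n|_I$: one must check that both proximality and non-asymptoticity survive when the set of times is restricted to the multiples of $n$, which rests on analysing how the orbits of $x$ and $y$ are distributed among the connected components of the cycles $X_m$ from Lemma~\ref{lem:solenoid} --- they must eventually lie in a common component of every $X_m$, for otherwise they would stay bounded away from each other. Everything that follows the reduction is a routine unwinding of the definition of an independence set together with the invariance statements already recorded in the preliminaries.
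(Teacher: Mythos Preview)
Your proposal is correct and follows essentially the same route as the paper's proof: both argue the contrapositive, use tameness to get zero entropy, invoke Lemma~\ref{lem:scrambled-soleniod} to see $\omega_f(x)$ is a solenoid, pass to an invariant arc $I$ with $f^n(I)\subset I$, cite the interval results (the paper cites \cite{L11}; you equivalently cite \cite{FS91} and \cite{GY09}) to obtain a non-diagonal IT-pair for $f^n|_I$, and then transport it back via $IT(G,f)=IT(G,f^n)$. You have also correctly singled out the only genuinely delicate step --- that the pair remains scrambled for $f^n|_I$ --- which the paper handles just as briefly; your suggested justification through the common connected components of the nested cycles $X_m$ is exactly the right mechanism (note that non-asymptoticity for $f^n$ is immediate by uniform continuity, so only proximality along multiples of $n$ needs the solenoid structure).
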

\begin{proof}
The proof follows the same lines as the proof of \cite[Theorem~3]{RS14}.
It is enough to show that when there is a scrambled pair then there is also a non-diagonal IT-pair.
The statement is obvious when $h(f)>0$, so let us assume that $h(f)=0$.
Consider any scrambled pair $(x,y)$ in $G$.
By Lemma~\ref{lem:scrambled-soleniod}, $\omega_f(x)$ is a solenoid.
Since graphs forming cycle are pairwise disjoint and there are only finitely many branching points in $G$, by Lemma~\ref{lem:solenoid}
there exists an arc $I$ contained completely in an edge of $G$ and $n>0$ such that $f^n(I)\subset I$ and there is also an arc $J\subset \Int I$
such that $J \cap \omega_f(x)\neq \emptyset$. Since $x,y$ are proximal and there are $k,s>0$ such that $f^{ik+s}(x)\in J$ for every $i\geq 0$,
without loss of generality we may assume that $x,y\in I$. But then $f^n|_I\colon I\to I$ is a (possibly not surjective) interval map containing a scrambled pair.
By results of \cite{L11} there is a non-diagonal IT-pair of $f^n$ in $I$, which implies that $f^n$ has a non-diagonal IT-pair in $G$.
But IT-pair for $f^n$ is also an IT-pair for $f$ which completes the proof.
\end{proof}

\begin{proof}[Proof of Theorem~\ref{thm:LY=nIT}]
Implication $\eqref{thm:LY=nIT:1}\Longrightarrow\eqref{thm:LY=nIT:2}$ is a direct consequence of Lemma~\ref{lem:tameLY} and
$\eqref{thm:LY=nIT:2}\Longrightarrow\eqref{thm:LY=nIT:3}$ follows by definition. Combining Lemmas \ref{lem:no-LY-pairs1} and \ref{lem:no-LY-pairs2} we obtain the last implication
$\eqref{thm:LY=nIT:3}\Longrightarrow\eqref{thm:LY=nIT:1}$.
\end{proof}

\section*{Acknowledgements}
Research of J. Li was supported in part by NSF of China (grant numbers 11401362 and 11771246).
Research of P. Oprocha was supported by National Science Centre, Poland (NCN), grant no.
2015/17/B/ST1/01259.

Some part of results contained in this paper was obtained during research stay of J. Li and P. Oprocha at Fudan University in Feb 2017. The authors are grateful to Guohua Zhang for his support and hospitality during their visit in Shanghai.
The authors would like to thank the anonymous referees
for the careful reading and helpful suggestions.

\end{document}